\newtheorem{dfn}{Definition}[section]
\newtheorem{thm}[dfn]{Theorem}
\newtheorem{rmk}[dfn]{Remark}
\newtheorem{prop}[dfn]{Proposition}
\newtheorem{co}[dfn]{Corollary}
\newtheorem{lem}[dfn]{Lemma}
\theoremstyle{definition}
\numberwithin{equation}{section}
\newcommand{\IE}{{\mathbb{E}}}
\newcommand{\IP}{{\mathbb{P}}}
\newcommand{\IR}{{\mathbb{R}}}
\newcommand{\FF}{{\mathcal{F}}}
\newcommand{\EE}{{\mathcal{E}}}
\newcommand{\IZ}{{\mathbb{Z}}}
\def\eps{\varepsilon}
\def\wh{\widehat}
\def\wt{\widetilde}
\def\<{\langle}
\def\>{\rangle}
\title{\bf  Discrete Approximation to Brownian Motion with Varying Dimension in Unbounded Domains}
\date{\today}
\author{{\bf Shuwen Lou}}
\begin{document}

\maketitle

\begin{abstract}
We establish the discrete approximation to Brownian motion with varying dimension (BMVD in abbreviation)  by  random walks. The setting is very similar to that in \cite{Lou1}, but here we use a different method allowing us to get rid the restrictions in \cite{Lou1} (or \cite{BC}) that the underlying state space has to be bounded, and that the initial distribution of the limiting continuous process has to be its invariant distribution. The approach in this paper is that we first obtain heat kernel upper bounds for the approximating random walks that are uniform in their mesh size, by establishing a Nash-type inequality based on their Dirichlet form characterization. Using the heat kernel upper bound,  we then show the tightness of the approximating random walks by delicate analysis.

\end{abstract}

\medskip
\noindent
{\bf AMS 2010 Mathematics Subject Classification}: Primary 60J27, 60J35; Secondary 31C25, 60J65.

\smallskip\noindent
{\bf Keywords and phrases}: Space of varying dimension, Brownian motion, random walk,  Dirichlet forms, heat kernel estimates,   tightness, Skorokhod space.

\section{Introduction}\label{Intro}

Brownian motion on spaces with varying dimension was introduced  in \cite{CL}. The state space of such a process  looks like a plane with a vertical half line installed on it,  ``embedded" in the following space:
\begin{equation*}
\IR^2 \cup \IR_+ =\{(x_1, x_2, x_3)\in \IR^3: x_1=0 \textrm{ or } x_2=x_3=0 \hbox{ and } x_1>0\}.
\end{equation*}
As has been noted in \cite{CL}, Brownian motion cannot be defined on such a state space in the usual sense because a two-dimensional Brownian motion does not hit a singleton. The BMVD in \cite{CL} was constructed by ``shorting" a closed disc on $\IR^2$ to a singleton, which in other words, makes the  resistance on this closed disc zero, so that  the process travels on the disc at infinite velocity. The resulting Brownian motion hits the shorted disc in finite time with probability one. Then an infinite half line $\IR_+$  is attached to the plane $\IR^2$  at this ``shorted" disc.

The  state space of BMVD can  be rigorously defined as follows:   Fix $0<\eps<1/64$ and denote by  $B_\eps $  the closed disk on $\IR^2$ centered at $(0,0)$ with radius $\eps $. Let 
${D_\eps}:=\IR^2\setminus  B_\eps $. By identifying $B_\eps $ with a singleton denoted by $a^*$, we  introduce a topological space $E:={D_\eps}\cup \{a^*\}\cup \IR_+$, with the origin of $\IR_+$ identified with $a^*$ and
a neighborhood of $a^*$ defined as $\{a^*\}\cup \left(V_1\cap \IR_+ \right)\cup \left(V_2\cap {D_\eps}\right)$ for some neighborhood $V_1$ of $0$ in $\IR^1$ and $V_2$ of $B_\eps $ in $\IR^2$. Let $m$ be the measure on $E$ whose restriction on $\IR_+$ or ${D_\eps}$ is $1$- or $2$-dimensional Lebesgue measure, respectively.  In particular,   we set   $m (\{a^*\} )=0$. Note that the measure $m$  depends on $\eps$, the radius of the ``hole" $B_\eps$.

Same as in \cite{CL},  the state space $E$ is equipped with the geodesic distance  $\rho$.  Namely, for $ x,y\in E$, $\rho (x, y)$ is the shortest path distance (induced
from the Euclidean space) in $E$ between $x$ and $y$.
For notation simplicity, we write $|x|_\rho$ for $\rho (x, a^*)$.
We use $| \cdot |$ to denote the usual Euclidean norm. For example, for $x,y\in {D_\eps}$,
$|x-y| $ is  the Euclidean distance between $x$ and $y$ in $\IR^2$.
Note that for $x\in {D_\eps}$, $|x|_\rho =|x|-\eps$.
Clearly,
\begin{equation}\label{e:1.1}
\rho (x, y)=|x-y|\wedge \left( |x|_\rho + |y|_\rho \right)
\quad \hbox{for } x,y\in {D_\eps}
\end{equation}
and $\rho (x, y)= |x|+|y|-\eps$ when $x\in \IR_+$ and $y\in {D_\eps}$ or vice versa.
Here and in the rest of this paper, for $a,b\in \IR$, $a\wedge b:=\min\{a,b\}$.

The following definition for BMVD can be found in \cite[Definition 1.1]{CL}.
\begin{dfn}[Brownian motion with varying dimension]\label{def-bmvd}An $m$-symmetric diffusion process satisfying the following properties is called Brownian motion with varying dimension. 
\begin{description}
\item{\rm (i)} its part process in $\IR_+$ or $D_\eps$ has the same law as standard Brownian motion in $\IR_+$ or $D_\eps$;
\item{\rm (ii)} it admits no killings on $a^*$;
\end{description}
\end{dfn}
 It follows from the definition that BMVD spends zero amount of time under Lebesgue measure (i.e. zero sojourn time) at $a^*$. The following theorem gives the  Dirichlet form characterization of BMVD. 

\begin{thm}[\cite{CL}]\label{BMVD-non-drift}
For every  $\eps >0$,
BMVD  on $E$ with parameter $\eps$ exists and is unique.
Its associated Dirichlet form $(\EE, \mathcal{D}(\EE))$ on $L^2(E; m)$ is given by
\begin{eqnarray*}
\mathcal{D}(\EE) &= &  \left\{f: f|_{D_\eps}\in W^{1,2}(D_\eps),  \, f|_{\IR_+}\in W^{1,2}(\IR_+),
\hbox{ and }
f (x) =f (0) \hbox{ q.e. on } {\partial D_\eps}\right\},  
\\
\EE(f,g) &=& \frac{1}{4} \int_{D_\eps}\nabla f(x) \cdot \nabla g(x) dx+\frac{1}{2}\int_{\IR_+}f'(x)g'(x)dx . 
\end{eqnarray*}
Furthermore, such a BMVD is a Feller process with strong Feller property. 
\end{thm}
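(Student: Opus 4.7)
Since the Dirichlet form is given explicitly, my strategy is the standard Fukushima construction: first prove $(\EE,\mathcal{D}(\EE))$ is a regular strongly-local symmetric Dirichlet form on $L^2(E;m)$, then derive the associated Hunt process, check that it satisfies Definition~\ref{def-bmvd}, and finally read off the Feller properties from a heat-kernel analysis. Uniqueness is then forced by the fact that any process satisfying the definition must have $(\EE,\mathcal{D}(\EE))$ as its Dirichlet form.

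For the first step the only nontrivial point is that the trace condition ``$f(x)=f(0)$ q.e.\ on $\partial D_\eps$'' is well-defined and closed: for $f\in W^{1,2}(D_\eps)$ the trace on the smooth curve $\partial D_\eps$ lies in $H^{1/2}(\partial D_\eps)$, while $f(0)$ is unambiguous since $W^{1,2}(\IR_+)\hookrightarrow C(\IR_+)$. Closedness of $(\EE,\mathcal{D}(\EE))$ in $L^2(E;m)$ follows from closedness of the $W^{1,2}$-forms on each piece together with the fact that the gluing constraint defines a closed subspace. The Markov property is immediate because normal contractions preserve both the constraint and the Dirichlet integrals on each piece. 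For regularity I would exhibit a core of functions in $C_c(E)$ that are smooth on $D_\eps$ and on $\IR_+$ separately and agree at $a^*$; such functions are $\EE_1$-dense in $\mathcal{D}(\EE)$ and uniformly dense in $C_c(E)$.

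For the second step general theory gives an $m$-symmetric Hunt process $X$ on $E$. I would verify property~(i) of Definition~\ref{def-bmvd} by identifying the part Dirichlet forms of $X$ on the open sets $D_\eps$ and $\IR_+$ with the restrictions of $\EE$ to $W^{1,2}_0(D_\eps)$ and $W^{1,2}_0(\IR_+)$, respectively; these are precisely the Dirichlet forms of Brownian motion on each piece (with the normalisation fixed by the coefficients $\tfrac14$ and $\tfrac12$), so the part processes have the required distribution. Property~(ii) follows from the strong locality of $\EE$, which carries no killing measure term, so $X$ is not killed at $a^*$ (or anywhere). Uniqueness then follows because any BMVD process must be $m$-symmetric, strongly local, coincide in law with Brownian motion on each open piece, and have no killing at $a^*$, which forces its Dirichlet form to equal $(\EE,\mathcal{D}(\EE))$.

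The last step is to extract the Feller and strong Feller properties, and this is where I expect the main work. I would establish existence and joint continuity of a transition density $p_t(x,y)$ with respect to $m$. Away from $a^*$ the state space is locally Euclidean and classical parabolic regularity applies; the obstacle is continuity of $p_t(\cdot,\cdot)$ at and across $a^*$, where pieces of different dimension meet. I would handle this in three stages: (a) prove a Nash-type inequality for $(\EE,\mathcal{D}(\EE))$ obtained by piecing together the Nash inequalities on $\IR_+$ and on $\IR^2$ through the gluing condition, yielding an on-diagonal upper bound for $p_t$; (b) localise near $a^*$ using the neighborhood structure $\{a^*\}\cup(V_1\cap\IR_+)\cup(V_2\cap D_\eps)$ and exploit symmetry of $p_t$ together with a parabolic H\"older-type estimate adapted to the glued space; (c) combine these to get joint continuity of $p_t$ on $E\times E$. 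Strong Feller then follows by dominated convergence in $x\mapsto\int p_t(x,y)f(y)\,m(dy)$, and Feller follows by also using the Gaussian off-diagonal decay of $p_t$ on the Euclidean pieces to guarantee $P_t f\in C_\infty(E)$ whenever $f\in C_\infty(E)$.
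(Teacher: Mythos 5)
First, note that this paper does not prove Theorem \ref{BMVD-non-drift} at all: it is imported verbatim from \cite{CL}, so there is no in-paper argument to match. Your outline for \emph{existence} (verify that the displayed form is a closed, Markovian, regular symmetric Dirichlet form with a core of piecewise-smooth functions agreeing at $a^*$, take the associated Hunt process, identify the part forms on $D_\eps$ and $\IR_+$, note the absence of a killing measure) and your route to the Feller and strong Feller properties (Nash-type inequality, on/off-diagonal heat kernel bounds, joint continuity of $p_t$ across $a^*$) are sound in spirit and parallel what is actually done in \cite{CL}, where the Feller properties likewise come out of heat kernel estimates; the only cosmetic caveat is the normalisation already flagged in the paper's remark, namely that the coefficient $\tfrac14$ makes the part process on $D_\eps$ a Brownian motion run at speed $1/2$ rather than standard Brownian motion, so checking Definition \ref{def-bmvd}(i) has to be read modulo that convention, as you implicitly do.

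The genuine gap is the uniqueness step. Writing ``uniqueness is then forced by the fact that any process satisfying the definition must have $(\EE,\mathcal{D}(\EE))$ as its Dirichlet form'' is a restatement of what has to be proved, not a proof. Knowing the part processes on $D_\eps$ and $\IR_+$ only pins down the unknown form $(Q,\FF)$ on the subspace $\FF^0=W^{1,2}_0(D_\eps)\oplus W^{1,2}_0(\IR_+)$ of functions vanishing q.e.\ at $a^*$; the entire content of uniqueness is to show that the domain $\FF$ and the form values on functions \emph{not} vanishing at $a^*$ are determined by the requirements of zero sojourn time and no killing at $a^*$. This is exactly the uniqueness theory of one-point (darning) extensions in \cite{CF}, which is what \cite{CL} invokes; if you want to argue it directly you must show (a) that $\FF\subset\mathcal{D}(\EE)$, i.e.\ every element of the unknown domain restricts to $W^{1,2}$ on each piece with constant trace on $\partial D_\eps$, (b) that $\FF^0$ has codimension one in $\mathcal{D}(\EE)$, so adjoining any single function not vanishing at $a^*$ recovers the whole domain, and (c) that the energy measure of the unknown form assigns no mass to $\{a^*\}$ and there is no extra jumping or killing supported there. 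Point (c) is not automatic from ``energy measures do not charge polar sets,'' because $a^*$ is non-polar (the process hits it), so it must be extracted from the zero-sojourn and no-killing hypotheses. Without either citing the one-point-extension uniqueness theorem or supplying (a)--(c), your uniqueness claim does not close.
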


\begin{rmk}
For the computation convenience in this paper, we let  BMVD in Theorem \ref{BMVD-non-drift} corresponds to the BMVD defined in \cite[Theorem 2.2]{CL} with parameters $(\eps, p=1)$ but running on $D_\eps$ at a speed $1/2$. 
\end{rmk}

It is well-known that Brownian motion on Euclidean spaces is the scaling limit of simple random walks on square lattices. In \cite{BC}, it was shown that reflected Brownian motion in bounded domains can be approximated by simple random walks. Roughly speaking, the method in \cite{BC} consists of two steps: The first step is to prove the tightness of the laws of the random walks by verifying the tightness of the martingale parts of the random walks through the analysis of their quadratic variations and then applying the Lyons-Zheng decomposition. The second step is to show the uniqueness of the subsequential limits of the laws of the random walks by characterizing the limits as the solution to the martingale problem for the infinitesimal generator of the continuous process. Utilizing the same method, it was proved in \cite{Lou1} that BMVD killed upon exiting a bounded domain can be approximated weakly by simple random walks in lattices with varying dimension. However, the method used in both \cite{BC} and \cite{Lou1} has  the  limitation that it only works on bounded domains, and that the initial distribution has to be the invariant measure  of the  continuous limiting process.

In this paper, we use a different approach to get rid of the constrain that the approximation can only be established on bounded domains with initial distribution having to be the invariant measure. To the best knowledge of the author, this method has not been used in any existing  literature, but can potentially be adapted to discrete approximations to other Markov processes with nice Dirichlet form characterizations.

Same as in \cite{Lou1}, we use a sequence of approximating random walks indexed by $k\ge 1$ on lattices with  with mesh-size $2^{-k}$.   Notice that  in \cite{Lou1}, these approximating random walks can be characterized in terms of Dirichlet forms.  Therefore carefully applying  the combination of  Nash-type inequality and Davies method provides some heat kernel upper bound that is ``uniform in $k$" for the entire  family of  random walks. Intuitively speaking,  this gives some level of ``equi-continuity" for the transition densities of this family random walks. From there.  the C-tightness of the random walks can be established via some delicate analysis. As a result, we show that starting from the darning point $a^*$, BMVD on $E$  can be weakly approximated by a family of random walks with varying dimension starting from the respective darning point in each of their state spaces. We note that although in this paper, the approximation   is only established for BMVD starting from the darning point $a^*$,  with similar computation one can show the same approximation results for BMVD starting from any single point.

The rigorous description of the  state spaces of   random walks with varying dimension has been given in \cite{Lou1}. Here we repeat it for completion.  For $k\in \mathbb{N}$,  let $D_\eps ^k:=D_\eps \cap 2^{-k}\IZ^2$.  We identify vertices  of $2^{-k}\IZ^2$ that are contained in the closed disc $B_\eps$ as a singleton $a^*_k$. Let $E^k:=2^{-k}\IZ_+\cup \{a^*_k\}\cup D_\eps^k$, where $\IZ_+=\{1,2,\dots\}$.

Recall that in general, a graph $G$  can be written as ``$G=\{G_v, G_e\}$", where $G_v$ is its collection of vertices, and $G_e$ is its connection of edges. Given any two vertices in $a,b\in G$,  if there is unoriented edge with endpoints $a$ and $b$,  we say $a$ and $b$ are adjacent to each other in $G$, written  `` $a\leftrightarrow b$ in $G$".   One can  always assume   that given two vertices $a, b$ on a graph, there is at most  one such unoriented edge connecting these two points (otherwise edges with same endpoints can be removed and replaced with one single edge). This unoriented edge is denoted by $e_{ab}$ or $e_{ba}$ ($e_{ab}$ and $e_{ba}$ are viewed as the same elelment in $G_e$).  In this paper,  for notational convenience, we denote by $\mathcal{G}_2:=\{2^{-k}\IZ^2, \mathcal{V}_2\}$, where $\mathcal{V}_2$ is the collection of the edges of $2^{-k}\IZ^2$. Also we denote by $\mathcal{G}_1:=\{2^{-k}\IZ_+\cup \{0\}, \mathcal{V}_1\}$ the  $1$-dimensional  lattice over $2^{-k}\IZ_+\cup \{0\}$, where $\mathcal{V}_1$ is the  collection of edges of $2^{-k}\IZ^1_+\cup \{0\}$. 

Now we introduce the graph structure on $E^k$. Let $G^k=\{G^k_v, G^k_e\}$ be a graph where $G^k_v=E^k$ is the collection of vertices and $G^k_e$ is the collection of unoriented edges over $E^k$  defined as follows:
\begin{align*}
G^k_e:=&\{e_{xy}:\, \exists \,x,y\in D_\eps^k, |\,x-y|=2^{-k},\, e_{xy}\in\mathcal{V}_2,\, e_{xy}\cap B_\eps=\emptyset\}
\\
\cup &\{e_{xy}: \exists \, x,y\in 2^{-k}\IZ_+\cup \{0\}, \,|x-y|=2^{-k},\, e_{xy}\in 
\mathcal{V}_1\}
\\
\cup &\{e_{xa^*_k}:x\in  D^k_\eps,\text{ there is at least one }y\in  2^{-k}\IZ^2\cap B_\eps \text{ with }|x-y|=2^{-k}, \,e_{xy}\in \mathcal{V}_2\}.
\end{align*}
  Note that $G^k=\{G^k_v, G^k_e\}$ is a connected graph. We emphasize that  given any $x\in G^k_v$, $x\neq a^*_k$, there is at most one element in $G^k_e$ with endpoints $x$ and $a^*_k$.   Denote by $v_k(x)=\#\{e_{xy}\in G^k_e\}$, i.e., the number of vertices in $G^k_v$ adjacent to $x$.  $E^k$ is equipped with the following underlying reference measure:
\begin{equation}\label{def-mk}
m_k(x):=\left\{
    \begin{aligned}
         &\frac{2^{-2k}}{4}v_k(x),  &x \in D^k_\eps;\\
        &\frac{2^{-k}}{2}v_k(x), &x \in 2^{-k}\IZ_+;\\
        & \frac{2^{-k}}{2}+\frac{2^{-2k}}{4}\left(v_k(x)-1\right), & x=a^*_k.
    \end{aligned}
\right.
\end{equation}
Next we define the random walks that will be shown to approximate the BMVD. Consider the following  Dirichlet form on $L^2(E^k, m_k)$:
\begin{align}\label{DF-RWVD-form}
\left\{
\begin{aligned}
&\mathcal{D}(\EE^{k})=L^2(E^k, m_k)
\\
&\EE^{k}(f, f)= \frac{1}{8}\sum_{\substack{e^o_{xy}:\; e_{xy}\in G^k_e,\\ x,y\in D^k_\eps\cup \{a_k^*\} }} \left(f(x)-f(y)\right)^2 +\frac{2^k}{4}\sum_{\substack{e^o_{xy}:\;e_{xy}\in G^k_e,\\ x,y\in 2^{-k}\IZ_+\cup \{a_k^*\} }}\left(f(x)-f(y)\right)^2,
\end{aligned}
\right.
\end{align}
where $e^o_{xy}$ is an {\it oriented edge} from  $x $  to  $y$. In other words, given any pair of  adjacent  vertices $x, y \in G^k_v$,  the edge with endpoints $x$ and $y$ is represented twice in the sum: $e^o_{xy}$ and $e^o_{yx}$.  One can verify that $(\EE^k, \mathcal{D}(\EE^{k}))$ on $L^2(E^k, m_k)$ is a regular symmetric Dirichlet form, therefore  we denote the  symmetric strong Markov process associated with it   by $X^k$.  The explicit distribution of $X^k$ is presented in Proposition \ref{jump-distribution-Xk}.   In this paper, for every fixed $0<\eps<1/64$, we select and then fix  some $k_0\in \mathbb{N}$ only depending on $\eps$ such that 
\begin{equation}\label{def-k0}
 2^{-k}<\eps/4 \quad \text{for all }    k\ge k_0.
\end{equation}

  Our main result is the following theorem.  
\begin{thm}\label{main-result}
For every $T>0$, the laws of  $\{X^{k}, \IP^{m_k}\}_{k\ge k_o}$ are tight in the space $\mathbf{D}([0, T], E, \rho)$ equipped with Skorokhod topology. Furthermore, as $k\rightarrow \infty$,  $(X^{k}, \IP^{\overline{m}_k}) $ converges weakly to BMVD with parameter $\eps$. 
\end{thm}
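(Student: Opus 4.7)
I would follow the two-step strategy signposted in the introduction: first derive a heat kernel upper bound for $X^{k}$ that is uniform in the mesh index $k$, and then bootstrap it into both the tightness of $\{X^{k}\}_{k\ge k_o}$ and the identification of the limit as the BMVD uniquely characterized by Theorem \ref{BMVD-non-drift}.

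\textbf{Step 1: uniform heat kernel upper bound.} Starting from the Dirichlet form $(\EE^{k},\mathcal{D}(\EE^{k}))$ in \eqref{DF-RWVD-form}, I would establish a Nash-type inequality of the form $\|f\|^{4}_{L^{2}(E^{k},m_{k})}\le C_{1}\bigl(\EE^{k}(f,f)+C_{2}\|f\|^{2}_{L^{2}(E^{k},m_{k})}\bigr)\|f\|^{2}_{L^{1}(E^{k},m_{k})}$ with constants $C_{1},C_{2}$ independent of $k\ge k_{o}$. The exponents correspond to effective dimension~$2$, which is natural since the $2$-dimensional sheet $D_\eps^{k}$ dominates the volume growth while the half-line $2^{-k}\IZ_{+}$ contributes only a lower-order term. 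Via the Carlen--Kusuoka--Stroock machinery this yields an on-diagonal bound $p^{k}(t,x,y)\le C t^{-1} e^{C_{2}t}$, and Davies' perturbation method applied with $\rho$-Lipschitz weights upgrades it to an off-diagonal Gaussian estimate $p^{k}(t,x,y)\le C_{1}t^{-1}\exp(C_{2}t)\exp\bigl(-\rho(x,y)^{2}/(C_{3}t)\bigr)$ valid for all $t>0$, $x,y\in E^{k}$, and $k\ge k_{o}$.

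\textbf{Step 2: tightness.} The Gaussian upper bound, combined with the symmetry and strong Markov property of $X^{k}$, gives a uniform sub-Gaussian exit-time estimate $\IP^{x}(\tau^{k}_{B_{\rho}(x,r)}\le t)\le C\exp(-r^{2}/(C't))$ valid once $r\gtrsim 2^{-k}$. This delivers the Aldous-type modulus of continuity $\lim_{\delta\to 0}\sup_{k\ge k_{o}}\sup_{\tau}\IP(\sup_{s\le\delta}\rho(X^{k}_{\tau+s},X^{k}_{\tau})>\eta)=0$ over stopping times $\tau\le T$, together with compact containment obtained from integrability of $p^{k}(t,a^{\ast}_{k},\cdot)$ against the tails of $m_{k}$. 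Since every jump of $X^{k}$ has $\rho$-size $\le 2^{-k}$, the accumulated oscillation controls the Skorokhod modulus and the tightness is in fact C-tightness in $\mathbf{D}([0,T],E,\rho)$.

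\textbf{Step 3 and main obstacle.} To identify any subsequential weak limit with BMVD, the natural route within this Dirichlet-form framework is to verify Mosco convergence of $(\EE^{k},\mathcal{D}(\EE^{k}))$ to $(\EE,\mathcal{D}(\EE))$, which by the Kuwae--Shioya theorem yields strong $L^{2}$-semigroup convergence and, combined with the tightness above, convergence of finite-dimensional distributions. The uniqueness assertion in Theorem \ref{BMVD-non-drift} then closes the argument. The principal technical difficulty is the \emph{uniform} Nash inequality near the darning vertex: by \eqref{def-mk}, $a^{\ast}_{k}$ has degree of order $\eps\cdot 2^{k}$ and carries mass of order $2^{-k}$ rather than $2^{-2k}$, so the standard Faber--Krahn / isoperimetric arguments must be executed carefully to obtain constants independent of $k$. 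A secondary delicate point is checking that the limiting process spends zero Lebesgue time at $a^{\ast}$ and is not killed there --- equivalently, that the energy coupling at the junction passes correctly to $(\EE,\mathcal{D}(\EE))$ in the Mosco limit, so that condition (ii) of Definition \ref{def-bmvd} is recovered.
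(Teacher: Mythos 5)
Your Steps 1--2 follow the paper's strategy in spirit, but the key estimate you assert is not correct as stated, and the gap matters exactly where the argument is delicate. On a graph whose walk has jump rate $2^{2k}$ and step size $2^{-k}$, Davies' method with Lipschitz weights only tolerates a tilt parameter of order $2^{k}$ (otherwise the perturbed energy measure $e^{-2\psi}\Gamma^k(e^{\psi},e^{\psi})$ blows up exponentially along an edge), so a uniform-in-$k$ bound of the form $p_k(t,x,y)\le C_1 t^{-1}e^{C_2 t}\exp\bigl(-\rho(x,y)^2/(C_3 t)\bigr)$ for \emph{all} $t>0$ is false in the regime $\rho(x,y)\gg 2^{k}t$, where the true decay is only Poissonian/exponential. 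This is precisely why the paper restricts $\alpha_k\le 2^{k-1}$ in Proposition \ref{general-davies} and obtains the two-regime bound of Corollary \ref{HKUB} (Gaussian when $d_k(x,y)\le 16\cdot 2^k t$, and $e^{-2^k d_k(x,y)/2}$ otherwise). Your claimed sub-Gaussian exit-time estimate and the Aldous modulus bound inherit the same defect: for fixed $\delta_1$ and times $t$ of order $8^{-k}$ one is in the non-Gaussian regime, and in fact the time window $[0,8^{-k}]$ cannot be handled by the heat kernel at all (the prefactor $1/t$ degenerates); the paper treats it separately by counting jumps with a Poisson/Stirling argument (Proposition \ref{P:3.6}, and again inside Propositions \ref{P:3.8} and \ref{P:3.10}). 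Also note that on the unbounded half-line the on-diagonal decay is $t^{-1/2}$ for large $t$, so the clean ``effective dimension $2$'' Nash inequality must carry either a zeroth-order term or, as in the paper, a mixed $1$- and $2$-dimensional form yielding $\|P^k_t\|_{1\to\infty}\le C(t^{-1}+t^{-1/2})$; and the uniformity at the darning vertex, which you correctly flag as the principal difficulty, is exactly what has to be proved (the paper does it via the isoperimetric comparison in Lemma \ref{L:3.1}, using that each edge at $a^*_k$ corresponds to at most two edges of $2^{-k}\IZ^2$ meeting $B_\eps$) and is not carried out in your proposal.

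Your Step 3 is a genuinely different route from the paper, but as written it is a plan rather than a proof. You do not verify (generalized) Mosco convergence of $(\EE^k,\mathcal{D}(\EE^k))$ on the varying spaces $L^2(E^k,m_k)$ to $(\EE,\mathcal{D}(\EE))$ on $L^2(E,m)$ -- in particular the liminf inequality at the junction, which encodes the gluing condition $f|_{\partial D_\eps}=f(0)$ and hence condition (ii) of Definition \ref{def-bmvd}, is exactly the hard point and is only named, not proved. Moreover, Kuwae--Shioya convergence yields strong $L^2$-resolvent/semigroup convergence, which gives finite-dimensional convergence for initial data absolutely continuous with respect to $m_k$; upgrading this to weak convergence of the laws started from the single points $a^*_k$ (which is what the theorem asserts, and what the tightness in Propositions \ref{P:3.8} and \ref{P:3.10} is proved for) requires additional pointwise regularity of the approximating heat kernels that you have not supplied. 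The paper avoids these issues by a martingale-problem identification: uniform convergence of the discrete generators $\mathcal{L}_k f\to\mathcal{L}f$ on the class $\mathcal{G}$ away from the junction (Lemma \ref{L3:12}), a bound on the occupation of the exceptional vertices via the heat kernel (Lemma \ref{L:3.13}), and uniqueness of the martingale problem deduced from the Feller property of the BMVD of Theorem \ref{BMVD-non-drift}. If you want to keep the Mosco route you must supply the convergence of forms on varying spaces, the treatment of the junction term, and the point-started convergence; otherwise the identification step remains open.
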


The rest of this paper is   organized as follows: For Section  \ref{S:2}, we first give a brief introduction to continuous-time reversible pure jump proceses and their corresponding symmetric Dirichlet forms in \S\ref{S:2.1}. Then in \S\ref{S:2.2} we present some basics about  the approximating random walks $\{X^k,k\ge 1\}$, including their explicit transition probabilities. These results werer obtained in \cite{Lou1}.  In \S\ref{S:2.3}, we review the results on isoperimetric inequalities for weighted graphs summarized from \cite{MB}.   In Section \ref{S:3}, we first establish Nash-type inequality for $\{X^k,k\ge 1\}$. From there using Davies method we obtain some heat kernel upper bounds for this family of random walks.  The tightness of $\{X^k,k\ge 1\}$ is shown in  Section \ref{S:4}, which is done with some very delicate analysis based upon the heat kernel upper bounds. Finally, the weak approximation result is completed in Section \ref{S:5}, by identifying
the limit of $\{X^k,k\ge 1\}$   as the unique solution to the martingale problem for the infinitesimal generator of BMVD, i.e., the generator associated with the Dirichlet form $(\EE, \mathcal{D}(\EE))$ in Theorem \ref{BMVD-non-drift}.

In this paper we  follow the convention that in the statements of the theorems or propositions $C, C_1, \cdots$ denote positive constants, whereas in their proofs $c, c_1, \cdots$ denote positive constants whose exact value is unimportant and may change
 from line to line.  

\section{Preliminaries}\label{S:2}

\subsection{Continuous-time reversible pure jump processes and symmetric Dirichlet forms}\label{S:2.1}

In this section, we give a brief background on continuous-time reversible pure jump processes   and symmetric Dirichlet forms. The results in this section can be found in \cite[\S 2.2.1]{CF}.

Suppose $\mathsf{E}$ is a locally compact separable metric space and $\{Q(x, dy)\}$ is a probability kernel on $(\mathsf{E}, \mathcal{B}(\mathsf{E}))$ with $Q(x, \{x\})=0$ for every $x\in \mathsf{E}$. Given a constant $\lambda >0$, we can construct a pure jump Markov process $\mathsf{X}$ as follows: Starting from $x_0\in \mathsf{E}$, $\mathsf{X}$ remains at $x_0$ for an exponentially distributed holding time $T_1$ with parameter $\lambda(x_0)$ (i.e., $\IE[T_1]=1/\lambda(x_0)$), then it jumps to some $x_1\in \mathsf{E}$ according to distribution $Q(x_0, dy)$; it remains at $x_1$ for another exponentially distributed holding time $T_2$  also with  parameter $\lambda(x_1)$ before jumping to $x_2$ according to distribution $Q(x_1, dy)$.  $T_2$ is independent of $T_1$. $\mathsf{X}$ then continues. The probability kernel $Q(x, dy)$ is called the {\it road map} of $\mathsf{X}$, and the $\lambda(x)$ is its {\it speed function}. If there is a $\sigma$-finite measure $\mathsf{m}_0$ on $\mathsf{E}$ with supp$[\mathsf{m}_0]=\mathsf{E}$ such that
\begin{equation}\label{symmetrizing-meas}
Q(x, dy)\mathsf{m}_0(dx)=Q(y, dx)\mathsf{m}_0(dy),
\end{equation}
$\mathsf{m}_0$ is called a {\it symmetrizing measure} of the road map $Q$. Another way to view \eqref{symmetrizing-meas} is that, $Q(x, dy)$ is the one-step transition ``probability" distribution, so its density with respect to the symmetrizing measure $Q(x,dy)/\mathsf{m}_0(dy)$ must be symmetric in $x$ and $y$, i.e., 
\begin{equation*}
\frac{Q(x, dy)}{\mathsf{m}_0(dy)}=\frac{Q(y, dx)}{\mathsf{m}_0(dx)}. 
\end{equation*}
The following theorem is a restatement of  \cite[Theorem 2.2.2]{CF}.
\begin{thm}[\cite{CF}]\label{DF-pure-jump}
Given a speed function $\lambda >0$. Suppose \eqref{symmetrizing-meas} holds, then the reversible pure jump process $\mathsf{X}$ described above can be characterized by the following Dirichlet form $(\mathfrak{E}, \mathfrak{F})$ on $L^2(\mathsf{E}, \mathsf{m})$ where the underlying reference measure is $\mathsf{m}(dx)=\lambda(x)^{-1}\mathsf{m}_0(dx)$ and
\begin{equation}\label{DF-EN}
\left\{
    \begin{aligned}
        &\mathfrak{F}=  L^2(\mathsf{E},\; \mathsf{m}(x)), \\
        &\mathfrak{E}(f,g) = \frac{1}{2} \int_{\mathsf{E}\times \mathsf{E}} (f(x)-f(y))(g(x)-g(y))Q(x, dy)\mathsf{m}_0(dx).
    \end{aligned}
\right.
\end{equation}
\end{thm}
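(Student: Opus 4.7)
The plan is a direct self-adjointness computation: identify the $L^2(\mathsf{m})$-generator of the pure-jump process $\mathsf{X}$, use \eqref{symmetrizing-meas} to symmetrize, read off the resulting bilinear form as $\mathfrak{E}$, and invoke uniqueness of Markov semigroups to match the Hunt process associated with $(\mathfrak{E},\mathfrak{F})$ to $\mathsf{X}$.

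From the exponential-holding-time and road-map description of $\mathsf{X}$, its infinitesimal generator on bounded measurable $f$ is
\begin{equation*}
\mathcal{L}f(x)=\lambda(x)\int_{\mathsf{E}}\bigl(f(y)-f(x)\bigr)\,Q(x,dy).
\end{equation*}
Because $\mathsf{m}(dx)=\lambda(x)^{-1}\mathsf{m}_0(dx)$, the speed function cancels and I obtain
\begin{equation*}
-\langle\mathcal{L}f,g\rangle_{\mathsf{m}}=\int_{\mathsf{E}\times\mathsf{E}}\bigl(f(x)-f(y)\bigr)g(x)\,Q(x,dy)\,\mathsf{m}_0(dx).
\end{equation*}
Swapping the roles of $x$ and $y$ and using \eqref{symmetrizing-meas} rewrites the same integral as $-\int(f(x)-f(y))g(y)\,Q(x,dy)\,\mathsf{m}_0(dx)$; averaging the two expressions yields
\begin{equation*}
-\langle\mathcal{L}f,g\rangle_{\mathsf{m}}=\tfrac12\int_{\mathsf{E}\times\mathsf{E}}\bigl(f(x)-f(y)\bigr)\bigl(g(x)-g(y)\bigr)\,Q(x,dy)\,\mathsf{m}_0(dx)=\mathfrak{E}(f,g).
\end{equation*}
Thus $\mathfrak{E}$ is precisely the quadratic form of the non-positive symmetric operator $-\mathcal{L}$ on a suitable core of bounded measurable functions.

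Next I would verify that $(\mathfrak{E},\mathfrak{F})$ in \eqref{DF-EN} really is a Dirichlet form on $L^2(\mathsf{E},\mathsf{m})$: symmetry of the integrand is \eqref{symmetrizing-meas}; non-negativity is immediate from $(f(x)-f(y))^2\geq 0$; the Markov property follows from $|\phi(s)-\phi(t)|\leq|s-t|$ for any unit contraction $\phi$ onto $[0,1]$; and closedness follows from Fatou applied to the non-negative integrand along any $L^2(\mathsf{m})$-Cauchy sequence. In the convention of \cite{CF} one writes $\mathfrak{F}=L^2(\mathsf{E},\mathsf{m})$ with the understanding that $\mathfrak{E}(f,f)$ may equal $+\infty$; the effective domain is $\{f\in L^2(\mathsf{m}):\mathfrak{E}(f,f)<\infty\}$. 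Finally, the Hunt process associated with $(\mathfrak{E},\mathfrak{F})$ coincides with $\mathsf{X}$ by the uniqueness part of Hille--Yosida: both semigroups are strongly continuous Markovian contractions on $L^2(\mathsf{m})$, and by the computation above their generators agree on a dense subspace of the $\mathfrak{E}_1$-domain.

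The main subtlety, and the reason this is stated as a theorem rather than a one-line observation, is the closedness of $\mathfrak{E}$ together with an intrinsic description of its domain: when $\lambda$ is unbounded the integrand in \eqref{DF-EN} need not be $\mathsf{m}_0$-integrable for a general $f\in L^2(\mathsf{m})$, so one must either restrict to the natural effective domain and check density in $L^2(\mathsf{m})$, or truncate and pass to the limit as in \cite[\S 2.2]{CF}. Modulo this, everything reduces to applying the symmetrization identity \eqref{symmetrizing-meas} twice.
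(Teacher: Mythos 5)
The paper offers no proof of this statement: it is a direct citation of \cite[Theorem 2.2.2]{CF}, so there is nothing in the paper to compare your argument against line by line. Your proof supplies the standard argument behind that theorem and its core computation is correct: identify the generator $\mathcal{L}f(x)=\lambda(x)\int(f(y)-f(x))\,Q(x,dy)$, observe that $\mathsf{m}(dx)=\lambda(x)^{-1}\mathsf{m}_0(dx)$ makes the speed cancel in $-\langle\mathcal{L}f,g\rangle_{\mathsf{m}}$, and symmetrize via \eqref{symmetrizing-meas} to produce the half-difference form. One correction to your closing remark, though. The theorem as used here (and as formulated in \cite{CF}) is a statement for \emph{bounded} speed functions, and in that regime the form is literally a bounded quadratic form on $L^2(\mathsf{E},\mathsf{m})$: indeed
\begin{equation*}
\mathfrak{E}(f,f)\le \int_{\mathsf{E}\times\mathsf{E}}\bigl(f(x)^2+f(y)^2\bigr)Q(x,dy)\,\mathsf{m}_0(dx)=2\int_{\mathsf{E}}f(x)^2\,\mathsf{m}_0(dx)\le 2\|\lambda\|_\infty\,\|f\|_{L^2(\mathsf{m})}^2,
\end{equation*}
using \eqref{symmetrizing-meas} and $\mathsf{m}_0=\lambda\,\mathsf{m}$. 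So $\mathfrak{F}=L^2(\mathsf{E},\mathsf{m})$ is exact, not a convention admitting $\mathfrak{E}(f,f)=+\infty$, closedness is immediate (a bounded nonnegative form on a Hilbert space is closed), and the ``main subtlety'' you flag does not arise. In the present paper $\lambda_k=2^{2k}$ is constant for each fixed $k$, so the bounded case is the only one invoked. If $\lambda$ were genuinely unbounded the conclusion $\mathfrak{F}=L^2(\mathsf{E},\mathsf{m})$ would simply be false, not a matter of truncation, so you would not be proving this theorem but a different one; better to state the boundedness hypothesis explicitly rather than portray unboundedness as a surmountable wrinkle.
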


\subsection{Continuous-time random walks on lattices with varying dimension} \label{S:2.2}

The following proposition is a restatement of \cite[Proposition 2.2]{Lou1}  which gives a description to the behavior  of $X^k$ in the unbounded space with varying dimension $E^k$.

\begin{prop}[\cite{Lou1}]\label{jump-distribution-Xk}
For every $k=1,2,\dots$, $X^k$ has  constant  speed function $\lambda_k=2^{2k}$ and  road map
\begin{equation*}
J_k(x, dy)=\sum_{z\in E^k, \;z\leftrightarrow x \text{ in }G^k}  j_k(x,z)\delta_{\{z\}}(dy), 
\end{equation*}
where
\begin{description}
\item{(i)} 
\begin{equation}\label{original-p_k(x,y)-1}
j_k(x, y)= \frac{1}{v_k(x)}, \quad \text{if }x\in D^k_\eps\cup 2^{-k}\IZ_+, \,y\leftrightarrow x  \text{ in }G^k
\end{equation}
\item{(ii)}
\begin{equation}\label{original-p_k(x,y)-2}
j(a^*_k, y)=\left\{
    \begin{aligned}
         &\frac{1}{v_k(a^*_k)+2^{k+1}-1},  &y \in D^k_\eps,\, y\leftrightarrow x \text{ in }G^k;\\
        &\frac{2^{k+1}}{v_k(a_k^*)+2^{k+1}-1}, &y \in 2^{-k}\IZ_+,\, y\leftrightarrow x \text{ in }G^k.
    \end{aligned}
\right.
\end{equation}
\end{description}
\end{prop}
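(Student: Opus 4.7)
The plan is to identify $X^k$ by matching its given Dirichlet form $(\EE^k,\mathcal{D}(\EE^k))$ in \eqref{DF-RWVD-form} against the canonical jump-type Dirichlet form from Theorem \ref{DF-pure-jump}, and then to invoke uniqueness of the symmetric Hunt process associated with a regular Dirichlet form.

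First I would introduce a candidate pure jump process $\mathsf{X}$ on $\mathsf{E}=E^k$ with the proposed constant speed function $\lambda_k=2^{2k}$ and road map $J_k$, and set $\mathsf{m}_0(dx):=\lambda_k\, m_k(dx)=2^{2k}m_k(dx)$, so that the reference measure in Theorem \ref{DF-pure-jump} becomes $\lambda_k^{-1}\mathsf{m}_0=m_k$, matching the underlying space of $\EE^k$.

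Next I would verify the symmetry condition \eqref{symmetrizing-meas} for every adjacent pair $x\leftrightarrow y$ in $G^k$. This amounts to showing that the conductance $J(x,y):=j_k(x,y)\mathsf{m}_0(\{x\})$ is symmetric in $(x,y)$ on each of the four kinds of edges. Using \eqref{def-mk}, \eqref{original-p_k(x,y)-1}, \eqref{original-p_k(x,y)-2}, this is immediate when neither endpoint is $a^*_k$: on a $D^k_\eps$-edge one gets
\[
J(x,y)=2^{2k}\cdot\frac{1}{v_k(x)}\cdot\frac{2^{-2k}v_k(x)}{4}=\frac{1}{4},
\]
and on a $2^{-k}\IZ_+$-edge (where $v_k\equiv 2$) one gets $J(x,y)=2^{2k}\cdot(1/2)\cdot 2^{-k}=2^{k-1}$. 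The two remaining cases, where one endpoint equals $a^*_k$, require the algebraic identity
\[
\frac{2^{k-1}+(v_k(a^*_k)-1)/4}{v_k(a^*_k)+2^{k+1}-1}=\frac{1}{4},
\]
which follows by clearing denominators. This gives $J(a^*_k,y)=1/4$ when $y\in D^k_\eps$ (so matching the $D^k_\eps$-edge value) and $J(a^*_k,y)=2^{k-1}$ when $y=2^{-k}\in 2^{-k}\IZ_+$ (matching the $\IZ_+$-edge value), which confirms symmetry.

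Finally I would apply Theorem \ref{DF-pure-jump} to read off the Dirichlet form $(\mathfrak{E},\mathfrak{F})$ of $\mathsf{X}$ on $L^2(E^k,m_k)$. Since $E^k$ is countable and $j_k$ is supported on adjacent pairs, the integral in \eqref{DF-EN} collapses to a double sum; rewriting it as a sum over oriented edges $e^o_{xy}$ and separating the two edge types recovers exactly $\EE^k$ from \eqref{DF-RWVD-form}, with the prefactors $1/8$ and $2^k/4$ accounted for by the factor $1/2$ in \eqref{DF-EN} together with the values $J=1/4$ and $J=2^{k-1}$ established above. Both forms share the same domain $L^2(E^k,m_k)$, hence they coincide, and by uniqueness of the symmetric Hunt process associated with a regular Dirichlet form, $X^k$ is distributed as claimed. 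The only delicate step is the verification at $x=a^*_k$, where the nonstandard forms of $m_k$ and $j_k(a^*_k,\cdot)$ must conspire via the identity above to produce the correct conductance; once that is checked, the remainder is routine bookkeeping.
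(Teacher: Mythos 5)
Your argument is correct: the conductance computations ($J=1/4$ on $D^k_\eps$-edges, $J=2^{k-1}$ on half-line edges, and the identity $\bigl(2^{k-1}+(v_k(a^*_k)-1)/4\bigr)/\bigl(v_k(a^*_k)+2^{k+1}-1\bigr)=1/4$ handling the two edge types at $a^*_k$) all check out against \eqref{def-mk} and \eqref{DF-RWVD-form}, and since every point of $E^k$ has positive $m_k$-mass, uniqueness of the Hunt process associated with the regular form identifies $X^k$ for every starting point. The paper gives no proof of this proposition (it is quoted from \cite{Lou1}), and your matching of \eqref{DF-RWVD-form} against the canonical jump form of Theorem \ref{DF-pure-jump} is precisely the natural route such a proof takes; the only cosmetic omission is the one-line check that $j_k(x,\cdot)$ sums to one, so that $J_k$ is genuinely a road map, which at $a^*_k$ follows from $(v_k(a^*_k)-1)+2^{k+1}=v_k(a^*_k)+2^{k+1}-1$.
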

The next proposition is essentially contained in the proof of \cite[Proposition 2.1]{Lou1}.

\begin{prop}\label{P:2.3}
For any  fixed $0<\eps\le 1/64$ and all $k\ge k_0$, where $k_0$ is specified in \eqref{def-k0},
\begin{equation}\label{mass-ak-star}
m_k(a^*_k)< 2^{-k}.
\end{equation}
\end{prop}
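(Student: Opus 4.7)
The plan is to unpack \eqref{def-mk} and reduce the inequality to a combinatorial bound on the degree $v_k(a^*_k)$, then establish that bound by a lattice-point count in a thin annulus around $\partial B_\eps$.

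The first step is algebraic. Since $a^*_k \notin D^k_\eps \cup 2^{-k}\IZ_+$, the third case of \eqref{def-mk} gives
$$m_k(a^*_k) = \frac{2^{-k}}{2} + \frac{2^{-2k}}{4}\bigl(v_k(a^*_k) - 1\bigr),$$
and a short rearrangement shows that $m_k(a^*_k) < 2^{-k}$ is equivalent to $v_k(a^*_k) \le 2^{k+1}$. Decomposing $G^k_e$ into its three defining clauses, exactly one edge incident to $a^*_k$ comes from the half-line (the edge joining $0 \equiv a^*_k$ to $2^{-k}$), while the remaining edges at $a^*_k$ correspond bijectively to
$$N_k := \bigl\{x \in D^k_\eps :\, \exists\, y \in 2^{-k}\IZ^2 \cap B_\eps \text{ with } |x-y| = 2^{-k}\bigr\}.$$
Thus $v_k(a^*_k) = 1 + |N_k|$, and it suffices to prove $|N_k| \le 2^{k+1} - 1$.

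For the geometric step, each $x \in N_k$ satisfies $\eps < |x| \le \eps + 2^{-k}$, so $N_k$ is contained in the annulus $A := \{z \in \IR^2 : \eps < |z| \le \eps + 2^{-k}\}$. I would then assign to each $x \in N_k$ the closed $2^{-k}$-square centered at $x$; these squares have pairwise disjoint interiors and all lie within the slightly enlarged annulus $A^* := \{z \in \IR^2 : \eps - 2^{-k} < |z| < \eps + 2\cdot 2^{-k}\}$. Comparing areas yields
$$|N_k| \cdot 2^{-2k} \le \pi\bigl((\eps + 2\cdot 2^{-k})^2 - (\eps - 2^{-k})^2\bigr) = 3\pi \cdot 2^{-k}\bigl(2\eps + 2^{-k}\bigr),$$
hence $|N_k| \le 6\pi\eps \cdot 2^k + 3\pi$. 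Feeding in the constraints $\eps \le 1/64$ and $2^{-k} < \eps/4$ (so $2^k > 256$), one verifies $6\pi\eps \cdot 2^k + 3\pi < (3\pi/32)\cdot 2^k + 3\pi < 2^{k+1} - 1$ by elementary numerics. The only step with genuine geometric content is this area comparison for lattice squares in a thin annulus, and it poses no real obstacle; the rest of the argument is bookkeeping.
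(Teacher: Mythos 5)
Your proposal is correct. Structurally it follows the same strategy as the paper: both arguments come down to a bound of the form $v_k(a^*_k)\le C\eps\cdot 2^k + C'$, which is then fed into the third line of \eqref{def-mk} and finished with elementary numerics using $2^{-k}<\eps/4$ and $\eps\le 1/64$. The difference is where that degree bound comes from. The paper simply cites $v_k(a^*_k)\le 56\eps\cdot 2^k+28$ from the proof of \cite[Proposition 2.1]{Lou1} and then estimates $m_k(a^*_k)\le \frac{2^{-k}}{2}+14\eps\cdot 2^{-k}+7\cdot 2^{-2k}<2^{-k}$ directly, without first recasting the inequality as a bound on the degree. You instead make the argument self-contained: you identify $v_k(a^*_k)=1+|N_k|$ with $N_k$ the lattice points of $D^k_\eps$ adjacent to $B_\eps$, locate $N_k$ in the annulus $\{\eps<|z|\le\eps+2^{-k}\}$, and run a disjoint-squares area-packing comparison to get $|N_k|\le 6\pi\eps\cdot 2^k+3\pi$, which is in fact sharper than the cited constant ($6\pi<56$, $3\pi<28$); this is essentially the same kind of lattice-counting argument that underlies the cited bound in \cite{Lou1}, so what your route buys is self-containedness and a cleaner reduction ($m_k(a^*_k)<2^{-k}\iff v_k(a^*_k)\le 2^{k+1}$), at the cost of a little extra geometry. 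One cosmetic remark: with $\eps\le 1/64$ the comparison $6\pi\eps\cdot 2^k\le(3\pi/32)2^k$ is an equality at $\eps=1/64$, so the first ``$<$'' in your final numerical chain should be ``$\le$''; the conclusion is unaffected since $(3\pi/32)2^k+3\pi<2^{k+1}-1$ comfortably once $2^k>256$.
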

\begin{proof}
It has been shown in the proof of \cite[Proposition 2.1]{Lou1} that for all $k=1,2,\dots$,
\begin{equation} \label{P2.3-1}
v_k(a^*_k)\le 56\eps\cdot 2^k+28.
\end{equation}
It thus follows from \eqref{def-k0} that for $k\ge k_0$,
\begin{align*}
m_k(a^*_k)&\le \frac{2^{-k}}{2}+\frac{2^{-2k}}{4}\left(56\eps \cdot 2^k+28\right)=\frac{2^{-k}}{2}+14\eps\cdot 2^{-k}+7\cdot 2^{-k}\cdot 2^{-k}
\\
&\le \frac{2^{-k}}{2}+14\eps\cdot 2^{-k}+\frac{7\eps}{4}\cdot 2^{-k} \le \left(\frac{1}{2}+\frac{63\eps}{4}\right)2^{-k}< 2^{-k}.
\end{align*}
\end{proof}

\subsection{Isoperimetric inequalities for weighted graphs} \label{S:2.3}

Before we establish Nash-type inequality for $X^k$, we give a summary on the isoperimetric inequalities for weighted graphs. Most of the results in this section can be found in  \cite{MB}. In the following, $\Gamma$ is a locally finite connected graph, and the collection of vertices of $\Gamma$ is denoted by $\mathbb{V}$. If two vertices $x,y\in \mathbb{V}$ are adjacent to each other, then the the unoriented edge connecting $x$ and $y$ is assigned a unique weight $\mu_{xy}>0$.   Set $\mu_{xy}=0$  if $x$ and $y$ are not adjacent in  $\Gamma$.   Denote by $\mu:=\{\mu_{xy}: x,y \text{ connected in }\Gamma\}$ the assignment of the weights on all the unoriented edges. $(\Gamma, \mu)$ is called a locally finite connected {\it weighted graph}. We equip the weighted  graph with $(\Gamma, \mu)$ following measure $\nu$ on $\mathbb{V}$:
\begin{equation}\label{meas-weighted-graph}
\nu (x):= \sum_{y\in \mathbb{V}: y\leftrightarrow x \text{ in }\Gamma} \mu_{xy}, \quad x\in \mathbb{V}.
\end{equation}
 Given two sets of vertices $A, B$ in $\mathbb{V}$, we define
\begin{equation}\label{def-mu-E}
\mu_\Gamma (A, B):=\sum_{x\in A}\sum_{y\in B}\mu_{xy}.
\end{equation}
The following definition of isoperimetric inequality is taken from \cite[Definition 3.1]{MB}.
\begin{dfn}
For $\alpha\in [1, \infty)$, we say that $(\Gamma, \mu)$ satisfies $\alpha$-isoperimetric inequality if there exists $C_0>0$ such that
\begin{equation*}
\frac{\mu_\Gamma(A, \mathbb{V}\backslash A)}{\nu(A)^{1-1/\alpha}} \ge C_0, \quad \text{for every finite non-empty }A\subset \mathbb{V}.
\end{equation*}
\end{dfn}

\begin{prop}[\cite{MB}]\label{MB-iso-implies-nash}
Let $(\Gamma, \mu)$ be a locally finite connected weighted graph satisfying $\alpha$-isoperimetric inequality  with constant $C_0$. Let $\nu$ be the measure defined in \eqref{meas-weighted-graph}. Then $(\Gamma, \mu)$ satisfies the following Nash-type inequality: 
\begin{equation*}
\frac{1}{2}\sum_{x\in \mathbb{V}}\sum_{y\in \mathbb{V}, y\leftrightarrow x} \left(f(x)-f(y)\right)^2\mu_{xy} \ge 4^{-(2+\alpha/2)}C_0^2 \|f\|_{L^2(\nu)}^{2+4/\alpha}\|f\|_{L^1(\nu)}^{-4/\alpha}, \quad f\in L^1(\nu)\cap L^2(\nu)
\end{equation*}
\end{prop}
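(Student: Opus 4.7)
My plan is to follow the classical route from isoperimetry to a Nash-type inequality via the graph co-area formula, applied to $f^2$. Without loss of generality I take $f \ge 0$: the inequality $||f(x)| - |f(y)|| \le |f(x) - f(y)|$ makes the left-hand side of the claimed inequality no smaller for $|f|$ than for $f$, while the $L^1(\nu)$ and $L^2(\nu)$ norms are unchanged.

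The first key step is the graph co-area / layer-cake identity
\begin{equation*}
\tfrac12 \sum_{x \in \mathbb{V}}\sum_{y \leftrightarrow x} |f^2(x) - f^2(y)|\, \mu_{xy} = \int_0^\infty \mu_\Gamma\!\left(\{f^2 > t\},\, \mathbb{V} \setminus \{f^2 > t\}\right) dt,
\end{equation*}
which, combined with the $\alpha$-isoperimetric inequality applied to every super-level set $\{f^2 > t\}$, bounds the left side below by $C_0 \int_0^\infty \nu(\{f^2 > t\})^{1 - 1/\alpha}\, dt$. By Minkowski's integral inequality with exponent $p = \alpha/(\alpha-1)$ applied to the representation $f^2 = \int_0^\infty \mathbf{1}_{\{f^2 > t\}}\, dt$, this last integral in turn dominates $\|f^2\|_{L^p(\nu)} = \|f\|_{L^{2\alpha/(\alpha-1)}(\nu)}^2$.

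Next I bound the same left-hand side above in terms of the Nash-inequality energy. Factoring $|f^2(x) - f^2(y)| = |f(x) - f(y)|\,(f(x) + f(y))$ and applying Cauchy-Schwarz to the double sum gives
\begin{equation*}
\tfrac12 \sum_{x,y} |f^2(x) - f^2(y)|\, \mu_{xy} \le \left(\tfrac12 \sum_{x,y}(f(x)-f(y))^2 \mu_{xy}\right)^{\!1/2} \left(\tfrac12 \sum_{x,y}(f(x)+f(y))^2 \mu_{xy}\right)^{\!1/2}.
\end{equation*}
The elementary bound $(f(x)+f(y))^2 \le 2(f(x)^2 + f(y)^2)$ together with $\sum_{y \leftrightarrow x} \mu_{xy} = \nu(x)$ from \eqref{meas-weighted-graph} makes the second factor at most $\sqrt{2}\, \|f\|_{L^2(\nu)}$.

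Combining the two estimates yields $\sqrt{2}\,\bigl(\tfrac12\sum_{x,y}(f(x)-f(y))^2\mu_{xy}\bigr)^{1/2}\|f\|_{L^2(\nu)} \ge C_0\, \|f\|_{L^{2\alpha/(\alpha-1)}(\nu)}^2$. The final step is the Hölder interpolation $\|f\|_{L^2(\nu)} \le \|f\|_{L^1(\nu)}^{1/(\alpha+1)} \|f\|_{L^{2\alpha/(\alpha-1)}(\nu)}^{\alpha/(\alpha+1)}$, with exponent $\theta = 1/(\alpha+1)$ fixed by $\tfrac12 = \theta + (1-\theta)(\alpha-1)/(2\alpha)$; rearranged this gives $\|f\|_{L^{2\alpha/(\alpha-1)}(\nu)}^2 \ge \|f\|_{L^2(\nu)}^{2+2/\alpha}\|f\|_{L^1(\nu)}^{-2/\alpha}$, and substituting then squaring delivers the Nash-type inequality. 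No single step is conceptually difficult, so I expect the only real care required is bookkeeping the numerical constants (the $\sqrt{2}$ factors from $(a+b)^2 \le 2(a^2+b^2)$ and from Cauchy-Schwarz, together with the exponent $1-1/\alpha$ introduced by applying isoperimetry to $f^2$ rather than to $f$) to recover the stated coefficient $4^{-(2+\alpha/2)} C_0^2$.
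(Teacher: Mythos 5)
Your proof is correct and follows essentially the same classical chain that the paper's one-line citation to Barlow refers to (co-area formula plus the isoperimetric inequality to get an $L^1$-Sobolev bound, applied to $f^2$, then Cauchy--Schwarz and H\"older interpolation); carrying out the bookkeeping in your outline gives the constant $\tfrac12 C_0^2$, which dominates the stated $4^{-(2+\alpha/2)}C_0^2$ for every $\alpha\ge 1$, so the proposition follows. Two cosmetic remarks: the reduction to $f\ge 0$ works because the energy of $|f|$ is no \emph{larger} than that of $f$ (your sentence states the inequality in the reverse direction), and applying the isoperimetric hypothesis requires the super-level sets $\{f^2>t\}$ to be finite, which is automatic for finitely supported $f$ and in the paper's application (where $\nu$ is bounded below), the general case being the standard truncation step in the cited reference.
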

\begin{proof}
This can be seen combining \cite[Theorem 3.7, Lemma 3.9, Theorem 3.14]{MB} and the proofs therein.
\end{proof}

The next proposition follows immediately from \cite{MB}. As a notation in \cite{MB}, given a weighted graph $(\Gamma, \mu)$ with collection of vertices $\mathbb{V}$. 
 We denote the  counting measure times $2^{-2k}$ on  $\IZ^2$ by  $\mu^{(2)}_k$.
\begin{prop}[\cite{MB}]\label{iso-Z^2}
Let $k\in \mathbb{N}$.  Let all edges of $2^{-k}\IZ_2$ be assigned with a weight of $2^{-2k}/4$.    There exists a constant $K>0$ independent of $k$ such that  for any finite subset $A$ of $2^{-k}\IZ_2$, 
\begin{equation}\label{eq-iso-Z^2}
\mu_{2^{-k}\IZ^2}(A, 2^{-k}\IZ\backslash A)\ge K\cdot 2^{-k}  \mu_k^{(2)} (A)^{1/2}.
\end{equation}
\end{prop}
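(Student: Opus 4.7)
The plan is to reduce this to the classical edge-isoperimetric inequality on the unit integer lattice $\IZ^2$ by a simple dilation, and then account for the scaling of edge weights and vertex count.

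First I set up the rescaling. The map $\phi:2^{-k}\IZ^2\to\IZ^2$, $\phi(x)=2^k x$, is a graph isomorphism for nearest-neighbor adjacency, so for any finite non-empty $A\subset 2^{-k}\IZ^2$ with image $\wt A:=\phi(A)$ the number of edges of $\IZ^2$ joining $\wt A$ to its complement equals the number of edges of $2^{-k}\IZ^2$ joining $A$ to its complement; call this common count $N(A)$. Since every edge of $2^{-k}\IZ^2$ has weight $2^{-2k}/4$,
\[
\mu_{2^{-k}\IZ^2}(A,\,2^{-k}\IZ^2\setminus A)=\frac{2^{-2k}}{4}\,N(A),
\]
and by definition of $\mu_k^{(2)}$ as $2^{-2k}$ times counting measure,
$|\wt A|=|A|=2^{2k}\mu_k^{(2)}(A)$.

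Second, I invoke the classical two-dimensional edge-isoperimetric inequality on $\IZ^2$: there is a universal constant $c_0>0$ such that $N(A)\ge c_0|\wt A|^{1/2}$ for every finite non-empty $\wt A\subset \IZ^2$. This is a standard result (Bollob\'as--Leader; also available in \cite{MB}), which can for instance be obtained by taking the union of unit squares centered at points of $\wt A$ and applying the classical $\IR^2$-isoperimetric inequality, using that the perimeter of this set is bounded by the edge boundary of $\wt A$ up to a universal multiplicative constant. Combining the two steps,
\[
\mu_{2^{-k}\IZ^2}(A,\,2^{-k}\IZ^2\setminus A)=\frac{2^{-2k}}{4}N(A)\ge \frac{c_0}{4}\,2^{-2k}\bigl(2^{2k}\mu_k^{(2)}(A)\bigr)^{1/2}=\frac{c_0}{4}\,2^{-k}\,\mu_k^{(2)}(A)^{1/2},
\]
which gives the claim with $K=c_0/4$, manifestly independent of $k$.

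There is essentially no obstacle here beyond invoking the $\IZ^2$-isoperimetric inequality as a black box and keeping track of exponents of $2^{-k}$ on both sides; the delicate part of the paper lies downstream, where this inequality is fed into Proposition \ref{MB-iso-implies-nash} to yield a Nash-type inequality whose constants must remain uniform in $k$ after the darning-point vertex $a^*_k$ and the half-line piece are incorporated into the weighted graph $G^k$.
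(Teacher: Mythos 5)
Your proof is correct and takes essentially the same route as the paper, which simply asserts that the proposition ``follows immediately from \cite{MB}'': the content is the classical edge-isoperimetric inequality on the unweighted lattice $\IZ^2$, and your dilation $\phi(x)=2^kx$ together with the bookkeeping of the edge weight $2^{-2k}/4$ and of $\mu_k^{(2)}$ as $2^{-2k}$ times counting measure is exactly the scaling argument the paper leaves implicit. The constant $K=c_0/4$ you obtain is manifestly independent of $k$, as required.
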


\section{Nash-type inequalities and equicontinuities for random walks on lattices with varying dimension} \label{S:3}

In the following, we let all the edges in the $2^{-k}\IZ_+\cup\{a^*_k\}$ be assigned with a weight $2^{-k}/2$. Then we define a measure on $2^{-k}\IZ_+\cup\{a^*_k\}$:
\begin{equation}\label{nu1}
\nu^{(1)}_k(x):=\frac{2^{-k}}{2}\cdot \#\{y\in 2^{-k}\IZ_+\cup \{a^*_k\}: y\leftrightarrow x\text{ in } 2^{-k}\IZ_+\cup \{a^*_k\}  \}.
\end{equation}
Similarly, let all the edges in the connected graph $D_\eps^k\cup\{a^*_k\}$ be assigned with a weight of $2^{-2k}/4$. Then we define a measure on 
$D_\eps^k\cup\{a^*_k\}$:
\begin{equation}\label{nu2}
\nu^{(2)}_k(x):=\frac{2^{-2k}}{4}\cdot \#\{y\in D_\eps\cup \{a^*_k\}: y\leftrightarrow x\text{ in } D_\eps\cup \{a^*_k\}  \}.
\end{equation}
\begin{lem}\label{L:3.1}
Let $D_\eps^k\cup \{a^*_k\}$ and $2^{-k}\IZ_+\cup \{a^*_k\} $ be equipped with the weights and measure described in the preceding paragraph. There exists a constant $C>0$ independent of $k$ such that for all $k\ge k_0$,
\begin{equation}\label{iso1}
\frac{\mu_{D_\eps\cup \{a^*_k\}}(A, (D^k_\eps \cup \{a^*_k\})\backslash A)}{\nu^{(2)}_k(A)^{1/2}}\ge 2^{-k}C,\quad \text{for any finite set }A\subset (D^k_\eps \cup \{a^*_k\}),
\end{equation}
and 
\begin{equation}\label{iso2}
\mu_{2^{-k}\IZ_+ \cup \{a^*_k\}}(A, (2^{-k}\IZ_+ \cup \{a^*_k\})\backslash A)  \ge 2^{-k}C,\quad \text{for any finite set }A\subset (2^{-k}\IZ_+ \cup \{a^*_k\}).
\end{equation}
\end{lem}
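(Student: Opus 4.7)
The plan is to prove the two inequalities separately. Inequality \eqref{iso2} is essentially trivial: the graph $2^{-k}\IZ_+\cup\{a^*_k\}$ is connected, infinite, and one-dimensional, with every edge carrying weight $2^{-k}/2$. Any finite subset $A$ has nonempty graph boundary (otherwise $A$ would be a union of connected components of a connected graph with infinite complement), so at least one boundary edge exists and hence $\mu_{2^{-k}\IZ_+\cup\{a^*_k\}}(A,(2^{-k}\IZ_+\cup\{a^*_k\})\setminus A)\ge 2^{-k}/2$, giving $C=1/2$.

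For \eqref{iso1} I would reduce to Proposition \ref{iso-Z^2} on the full lattice $2^{-k}\IZ^2$ via a lifting argument. Given $A\subset D^k_\eps\cup\{a^*_k\}$, define its lift
\[
\tilde A := \begin{cases}(A\setminus\{a^*_k\})\cup(2^{-k}\IZ^2\cap B_\eps), & a^*_k\in A,\\ A, & a^*_k\notin A,\end{cases}
\]
so that Proposition \ref{iso-Z^2} gives $\mu_{2^{-k}\IZ^2}(\tilde A,2^{-k}\IZ^2\setminus\tilde A)\ge K\cdot 2^{-k}\,\mu^{(2)}_k(\tilde A)^{1/2}$. I would then carry out two comparisons. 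First, a \emph{boundary comparison}: split the edges of $\partial\tilde A$ in $2^{-k}\IZ^2$ into those with both endpoints in $D^k_\eps$ (which are in one-to-one correspondence with the matching edges in $\partial_{G^k} A$) and those with one endpoint in $B_\eps\cap 2^{-k}\IZ^2$ (a bundle of up to four parallel such edges sharing a common endpoint $y\in D^k_\eps$ collapses in $G^k$ to the single edge $e_{a^*_k y}$). Since every lattice vertex in $2^{-k}\IZ^2$ has at most four neighbors, this yields
\[
\mu_{D^k_\eps\cup\{a^*_k\}}(A,(D^k_\eps\cup\{a^*_k\})\setminus A)\ \ge\ \tfrac{1}{4}\,\mu_{2^{-k}\IZ^2}(\tilde A,\,2^{-k}\IZ^2\setminus\tilde A).
\]
Second, a \emph{volume comparison}: when $a^*_k\notin A$, the pointwise estimate $\nu^{(2)}_k(x)\le 2^{-2k}$ on $D^k_\eps$ gives $\nu^{(2)}_k(A)\le \mu^{(2)}_k(\tilde A)$ at once; when $a^*_k\in A$, the bound $v_k(a^*_k)\le 56\eps\cdot 2^k+28$ from the proof of Proposition \ref{P:2.3} combined with $2^{-k}<\eps/4$ from \eqref{def-k0} yields $\nu^{(2)}_k(a^*_k)\le c_1\eps^2$, while $|2^{-k}\IZ^2\cap B_\eps|\cdot 2^{-2k}\ge c_2\eps^2$ for absolute constants $c_1,c_2$, so that $\nu^{(2)}_k(A)\le c\,\mu^{(2)}_k(\tilde A)$ with $c$ independent of $k$.

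Chaining the two comparisons with Proposition \ref{iso-Z^2} produces \eqref{iso1} with a constant $C$ depending only on $K$, $c_1$, $c_2$. The main obstacle will be the careful bookkeeping in the boundary comparison when $a^*_k\in A$: one has to match each merged bundle of $2^{-k}\IZ^2$-edges between $B_\eps\cap 2^{-k}\IZ^2$ and a given $y\in D^k_\eps\setminus A$ with its unique counterpart $e_{a^*_k y}\in G^k_e$, without double counting, and to confirm that the ``at most four edges per bundle'' bound is genuinely uniform in $k$ and in the geometric position of $y$ relative to $\partial B_\eps$.
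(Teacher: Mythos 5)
Your proposal is correct and takes essentially the same route as the paper's proof: \eqref{iso2} is dismissed as immediate, and for \eqref{iso1} the paper likewise lifts $A$ by replacing $a^*_k$ with the lattice points of $B_\eps$, compares the collapsed boundary with the boundary in $2^{-k}\IZ^2$ at the cost of a bounded factor (the paper loses a factor $1/2$ via the observation that each edge to $a^*_k$ corresponds to at most two lattice edges, while you use the safe factor $1/4$), applies Proposition \ref{iso-Z^2}, and then compares $\nu^{(2)}_k(A)$ with $\mu^{(2)}_k$ of the lifted set using the bound on $v_k(a^*_k)$ together with \eqref{def-k0}. The only differences are the cosmetic constants in the boundary and volume comparisons, so no further commentary is needed.
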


\begin{proof}
\eqref{iso2} is clear.  In the following we prove \eqref{iso1}. We divide it into two cases depending whether $a^*_k\in A$ or not. For the remainder of this proof, we denote by 
\begin{equation*}
B_\eps^k:=\{x\in 2^{-k}\IZ_2:\, \text{the Euclidean distance between } x \text{ and the origin of }\IR^2 \text{ is  }\le \eps.\}
\end{equation*}
{\it Case (i).} $a^*_k\notin A$. 
\begin{eqnarray}
&& \mu_{D_\eps\cup \{a^*_k\}}(A, \,(D^k_\eps \cup \{a^*_k\})\backslash A) \nonumber
\\
&=& \frac{2^{-2k}}{4} \sum_{x\in A}\#\bigg\{y\in D_\eps^k\backslash A: y\leftrightarrow x \text{ in }D^k_\eps\cup \{a^*_k\}  \bigg \} + \frac{2^{-2k}}{4} \#\bigg\{x\in A: x\leftrightarrow a^*_k \text{ in }D^k_\eps\cup \{a^*_k\}\bigg\} \nonumber 
\\
&\ge & \frac{2^{-2k}}{4} \sum_{x\in A}\#\bigg\{y\in D_\eps^k\backslash A: y\leftrightarrow x \text{ in }2^{-k}\IZ_2 \bigg \} + \frac{2^{-2k}}{4}\cdot  \frac{1}{2} \sum_{x\in A}\#\bigg\{y\in B_\eps^k, y \leftrightarrow x \text{ in }2^{-k}\IZ_2\bigg\}\nonumber
\\
&\ge &  \frac{1}{2}\cdot \frac{2^{-2k}}{4} \sum_{x\in A}\#\bigg\{ y\in 2^{-k}\IZ_2: y\leftrightarrow x \text{ in }2^{-k}\IZ_2\bigg\} \nonumber
\\
&\stackrel{\eqref{def-mu-E}}{=}& \frac{1}{2}\;\mu_{2^{-k}\IZ_2}(A,\, 2^{-k}\IZ_2\backslash A)  \nonumber
\\
&\stackrel{\eqref{eq-iso-Z^2}}{\ge} &\frac{K}{2}\cdot 2^{-k}\mu_k^{(2)}(A)^{1/2} =\frac{K}{2}\cdot 2^{-k}\nu_k^{(2)}(A)^{1/2}, \label{compute-L3.1-1}
\end{eqnarray}
where the first inequality is due to the fact that every edge in $D_\eps^k\cup \{a^*_k\}$   with one endpoint being $a^*_k$ can be identified with at least one, but no more than two edges in $2^{-k}\IZ_2$ that has one end point in $B_\eps^k$. 
\\
{\it Case (ii).} $a^*_k\in A$. For this case,
\begin{eqnarray}
&& \mu_{D_\eps\cup \{a^*_k\}}(A, \,(D^k_\eps \cup \{a^*_k\})\backslash A) \nonumber
\\
&=& \frac{2^{-2k}}{4} \sum_{x\in A\backslash \{a^*_k\}} \#\bigg\{   y\in D_\eps^k\backslash (A\backslash \{a^*_k\}): y\leftrightarrow x     \text{ in }D^k_\eps\cup \{a^*_k\} \bigg\} \nonumber
\\
&+& \frac{2^{-2k}}{4}\#\bigg\{    y \in D^k_\eps\backslash (A\backslash  \{a^*_k\}): y\leftrightarrow a^*_k \text{ in }D^k_\eps\cup \{a^*_k\} \bigg\} \nonumber
\\
&\ge & \frac{2^{-2k}}{4} \sum_{y\in D^k_\eps\backslash (A\backslash  \{a^*_k\})} \#\bigg\{ x\in A\backslash \{a^*_k\}: x\leftrightarrow y \text{  in } 2^{-k}\IZ_2   \bigg\} \nonumber
\\
&+& \frac{1}{2}\cdot \frac{2^{-2k}}{4} \sum_{y \in D^k_\eps\backslash (A\backslash  \{a^*_k\})} \#\bigg\{ x\in B_\eps^k: x\leftrightarrow y \text{ in }2^{-k}\IZ_2\bigg\} \nonumber
\\
&\ge & \frac{1}{2} \cdot \frac{2^{-2k}}{4} \sum_{y \in D^k_\eps\backslash (A\backslash  \{a^*_k\})}\#\bigg\{ x\in (A\backslash \{a^*_k\})\cup B_\eps^k: x\leftrightarrow y \text{ in }2^{-k}\IZ_2\bigg\}    \nonumber
\\
&\stackrel{\eqref{iso-Z^2}}{\ge}  &   \frac{1}{2} \mu_{2^{-k}\IZ_2}((A\backslash \{a^*_k\}),\, 2^{-k}\IZ_2 \backslash A  )  \nonumber
\\
&\ge &\frac{K}{2}\cdot 2^{-k}  \mu_k^{(2)}((A\backslash \{a^*_k\})\cup B_\eps^k)^{1/2}\nonumber
\\
&\stackrel{\eqref{def-mk}}{\ge}  & \frac{K}{2}\cdot 2^{-k} \left( \mu_k^{(2)}((A\backslash \{a^*_k\} )+m_k(a^*_k)-\frac{2^{-k}}{2}    \right)^{1/2} =\frac{K}{2}\cdot 2^{-k}\nu_k^{(2)}(A), \label{compute-L3.1-2}
\end{eqnarray}
where the first inequality above also uses the fact that every edge in $D_\eps^k\cup \{a^*_k\}$   with one endpoint being $a^*_k$ can be identified with at least one, but no more than two edges in $2^{-k}\IZ_2$ that has one end point in $B_\eps^k$. Combining the \eqref{compute-L3.1-1} and \eqref{compute-L3.1-2}, \eqref{iso1} is verified. The proof is thus complete.
\end{proof}

\begin{prop}\label{unif-nash-1}
For every $k\in \mathbb{N}$, let $(P_t^k)_{t\ge 0}$  be  the transition semigroup of $X^k$ with respect to $m_k$.  There exists a constant $C_1>0$ independent of $k$   such that for all $k\ge k_0$, 
\begin{equation}
\|P^k_t\|_{1\rightarrow \infty}\le C_1\left(\frac{1}{t}+\frac{1}{\sqrt{t}}\right), \quad \forall t\in (0, +\infty].
\end{equation}
\end{prop}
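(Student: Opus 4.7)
The approach is the classical route from a Nash-type functional inequality to ultracontractivity, adapted to the mixed one-and-two-dimensional geometry of $E^k$. The two-dimensional piece forces small-time decay of order $t^{-1}$ while the one-dimensional half-line forces large-time decay of order $t^{-1/2}$, so no single Nash inequality with one exponent will suffice; instead I would split the state space and patch two Nash inequalities of different scalings.

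First, I would apply Proposition \ref{MB-iso-implies-nash} separately to the weighted subgraphs from Lemma \ref{L:3.1}: to $D^k_\eps \cup \{a^*_k\}$ (equipped with edge weights $2^{-2k}/4$ and vertex measure $\nu^{(2)}_k$) with $\alpha = 2$, and to $2^{-k}\IZ_+ \cup \{a^*_k\}$ (with edge weights $2^{-k}/2$ and vertex measure $\nu^{(1)}_k$) with $\alpha = 1$. Lemma \ref{L:3.1} supplies the $\alpha$-isoperimetric constants, both of order $2^{-k}$, so the resulting Nash inequalities each carry a factor $2^{-2k}$. This factor is exactly cancelled by the coefficient $2^{2k}$ appearing when one matches $\EE^k$ against the weighted edge-sums ($1/4$ on each undirected 2D edge versus weight $2^{-2k}/4$, and $2^k/2$ versus weight $2^{-k}/2$ on the 1D edges). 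The identity $m_k = \nu^{(1)}_k + \nu^{(2)}_k$ at $a^*_k$, which follows from \eqref{def-mk} and the fact that $a^*_k$ has exactly one neighbor on the half-line, together with $\nu^{(i)}_k = m_k$ on the interior of each piece, then yields for any nonnegative $f \in L^1\cap L^2(E^k, m_k)$ with restrictions $f_i$ to the $i$-th piece that
\begin{equation*}
\EE^k(f,f) \;\ge\; c \bigl( \|f_1\|_{L^2(\nu^{(2)}_k)}^4 \|f\|_{L^1(m_k)}^{-2} + \|f_2\|_{L^2(\nu^{(1)}_k)}^6 \|f\|_{L^1(m_k)}^{-4} \bigr).
\end{equation*}
Since at least one of $\|f_1\|_{L^2(\nu^{(2)}_k)}^2, \|f_2\|_{L^2(\nu^{(1)}_k)}^2$ is $\ge \tfrac{1}{2}\|f\|_{L^2(m_k)}^2$, this reduces to the joint Nash inequality
\begin{equation*}
\EE^k(f,f) \;\ge\; c' \min\Bigl( \|f\|_{L^2(m_k)}^4 / \|f\|_{L^1(m_k)}^2, \; \|f\|_{L^2(m_k)}^6 / \|f\|_{L^1(m_k)}^4 \Bigr),
\end{equation*}
with $c'$ independent of $k \ge k_0$.

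Second, I would run the standard Nash/Beurling--Deny ODE argument. For $f \ge 0$ with $\|f\|_{L^1(m_k)} = 1$, set $\phi(t) := \|P^k_t f\|_{L^2(m_k)}^2$; the $L^1$-contractivity of $P^k_t$ gives $\|P^k_t f\|_{L^1(m_k)} \le 1$, and the standard identity $\phi'(t) = -2\EE^k(P^k_t f, P^k_t f)$ combined with the joint Nash inequality yields $\phi'(t) \le -2c' \min(\phi(t)^2, \phi(t)^3)$. Integrating separately on $\{\phi \ge 1\}$ (where $(1/\phi)' \ge 2c'$) and on $\{\phi \le 1\}$ (where $(1/\phi^2)' \ge 4c'$) produces $\phi(t) \le C(1/t + 1/\sqrt{t})$ for all $t > 0$, i.e.\ $\|P^k_t\|_{L^1 \to L^2}^2 \le C(1/t + 1/\sqrt{t})$. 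The semigroup property combined with $L^2(m_k)$-self-adjointness of $P^k_t$ then gives
\begin{equation*}
\|P^k_t\|_{L^1 \to L^\infty} \;\le\; \|P^k_{t/2}\|_{L^1 \to L^2} \cdot \|P^k_{t/2}\|_{L^2 \to L^\infty} \;=\; \|P^k_{t/2}\|_{L^1 \to L^2}^2 \;\le\; C_1\bigl(1/t + 1/\sqrt{t}\bigr),
\end{equation*}
which is the claimed bound.

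The principal technical hurdle is the uniform-in-$k$ bookkeeping near the darning point $a^*_k$: verifying that the isoperimetric, hence Nash, constants produced by Lemma \ref{L:3.1} and Proposition \ref{MB-iso-implies-nash} are genuinely independent of $k \ge k_0$, and that the identities relating $m_k, \nu^{(1)}_k, \nu^{(2)}_k$ at $a^*_k$ are treated correctly. Proposition \ref{P:2.3}, which bounds $m_k(a^*_k)$ above by $2^{-k}$, is exactly what prevents the extra mass concentrated at $a^*_k$ from degrading these uniform constants.
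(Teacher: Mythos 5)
Your proposal is correct and follows essentially the same route as the paper: the same splitting into the weighted subgraphs $D^k_\eps\cup\{a^*_k\}$ and $2^{-k}\IZ_+\cup\{a^*_k\}$, the same use of Lemma \ref{L:3.1} and Proposition \ref{MB-iso-implies-nash} with $\alpha=2$ and $\alpha=1$, and the same cancellation of the $2^{-2k}$ isoperimetric constants against the $2^{2k}$ scaling of $\EE^k$, with uniformity in $k\ge k_0$ coming from Lemma \ref{L:3.1}. The only difference is in the final step: where the paper feeds the resulting two-term Nash inequality into \cite[Corollary 2.12]{CKS} (with $\mu=1$, $\nu=2$), you re-derive that implication by hand via the standard Nash ODE argument and $L^1$--$L^2$--$L^\infty$ duality, which is a valid substitute.
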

\begin{proof}
We  still consider the weighted graph $2^{-k}\IZ_+\cup\{a^*_k\}$ with all edges having equal  weight $2^{-k}/2$, and all edges of the weighted graph $D_\eps^k\cup\{a^*_k\}$ have  an equal weight of $2^{-2k}/4$. Also for now, we let the measures on $2^{-k}\IZ_+\cup\{a^*_k\}$ and $D_\eps^k\cup\{a^*_k\}$ be $\nu^{(1)}_k$ and $\nu^{(2)}_k$, respectively. Lemma \ref{L:3.1} says that the weighted graph $2^{-k}\IZ_+\cup\{a^*_k\}$ described above satisfies isoperimetric inequality $I_1$ with constant $2^{-k}C$, and the weighted graph $D_\eps^k\cup\{a^*_k\}$ described above satisfies isoperimetric inequality $I_2$ also with constant $2^{-k}C$, for all $k\ge k_0$. Therefore, by Proposition \ref{MB-iso-implies-nash}, it holds for all $f\in L^1\left(2^{-k}\IZ_+\cup\{a^*_k\}, \;\nu^{(1)}_k\right) \cap L^2\left(2^{-k}\IZ_+\cup\{a^*_k\}, \;\nu^{(1)}_k\right) $ that
\begin{equation*}
\frac{2^{-k}}{4}\sum_{x\in 2^{-k}\IZ_+\cup \{0\}} \sum_{\substack{y\in 2^{-k}\IZ_+\cup \{0\},\\ y\leftrightarrow x }} \left(f(x)-f(y)\right)^2\ge 4^{-4} 2^{-2k}C^2\|f\|^6_{L^2\left(2^{-k}\IZ_+\cup \{a^*_k\}, \;\nu^{(1)}_k\right)}\cdot\|f\|^{-4}_{L^1\left(2^{-k}\IZ_+\cup\{a^*_k\}, \;\nu^{(1)}_k\right)}.
\end{equation*}
Similarly, also by Proposition \ref{MB-iso-implies-nash}, for all $f\in L^1\left(D_\eps^k\cup\{a^*_k\}, \;\nu^{(2)}_k\right) \cap L^2\left(D_\eps^k\cup\{a^*_k\}, \;\nu^{(2)}_k\right) $,
\begin{equation*}
\frac{2^{-2k}}{8}\sum_{x\in D_\eps^k\cup\{a^*_k\}} \sum_{\substack{y\in D_\eps^k\cup \{a^*_k\},\\ y\leftrightarrow x} } \left(f(x)-f(y)\right)^2\ge 4^{-4} 2^{-2k}C^2  \|f\|^4_{L^2\left(D_\eps^k\cup \{a^*_k\}, \;\nu^{(2)}_k\right)}\cdot\|f\|^{-2}_{L^1\left(D_\eps^k\cup \{a^*_k\}, \;\nu^{(2)}_k\right)}.
\end{equation*}
The above two inequalities can be rewritten as 
\begin{eqnarray}
&&\left( \frac{2^k}{4}\sum_{x\in 2^{-k}\IZ_+\cup\{a^*_k\}} \sum_{\substack{y\in 2^{-k}\IZ_+\cup\{a^*_k\},\\ y\leftrightarrow x} } \left(f(x)-f(y)\right)^2\right)^{1/3}\|f\|^{4/3}_{L^1\left(2^{-k}\IZ_+\cup\{a^*_k\}, \nu^{(1)}_k\right)} \nonumber
\\
&\ge & 4^{-4/3}C^{2/3} \|f\|^2_{L^2\left(2^{-k}\IZ_+\cup\{a^*_k\}, \nu^{(1)}_k\right)} \label{Z1-scale-3}
\end{eqnarray}
and 
\begin{align}\label{Z2-scale-4}
\left( \sum_{x\in D^k_\eps\cup \{a^*_k\}} \sum_{\substack{y\in D^k_\eps\cup \{a^*_k\},\\ y\leftrightarrow x} } \left(f(x)-f(y)\right)^2\right)^{1/2}\cdot\|f\|_{L^1\left(D^k_\eps\cup \{a^*_k\}, \;\nu^{(2)}_k\right)} \ge 4^{-2}C \|f\|^2_{L^2\left(D^k_\eps\cup \{a^*_k\}, \;\nu^{(2)}_k\right)}.
\end{align}
Notice that any $f\in L^1(E^k, m_k)\cap L^2(E^k, m_k)$, it holds that 
\begin{equation*}
\|f\|_{L^p(E^k, m_k)}=\left\|f|_{2^{-k}\cup \{a^*_k\}}\right\|_{L^p\left(2^{-k}\cup \{a^*_k\}, \nu^{(1)}_k\right)}+\left\|f|_{D^k_\eps\cup \{a^*_k\}}\right\|_{L^p\left(D_\eps^k\cup \{a^*_k\}, \nu^{(2)}_k\right)}, \quad p=1,2.
\end{equation*}
For notational convenience, for $f\in L^1(E^k, m_k)\cap L^2(E^k, m_k)$, we set $f_1:=f|_{2^{-k}\cup \{a^*_k\}} $ and $f_2:=f|_{D^k_\eps\cup \{a^*_k\}}$.   Adding up \eqref{Z2-scale-4} and \eqref{Z1-scale-3} yields that for some $c_3>0$ independent of $k$ yields for all $f\in L^1(E^k, m_k)\cap L^2(E^k, m_k)$ that
\begin{eqnarray} 
&&\left(4^{-4/3}C^{2/3}\wedge 4^{-2}C \right)\|f\|^2_{L^2(E^k, m_k)}  \nonumber
\\
&\le &
\left(\frac{2^k}{4}\sum_{x\in 2^{-k}\IZ_+\cup \{0\}} \sum_{\substack{y\in 2^{-k}\IZ_+\cup \{0\},\\ y\leftrightarrow x }} \left(f_1(x)-f_1(y)\right)^2\right)^{1/3} \cdot \|f_1\|_{L^1\left(2^{-k}\IZ_+\cup \{0\}, \;\mu^{(1)}_k\right)}^{4/3}  \nonumber
\\
&+&\left( \sum_{x\in D^k_\eps\cup \{a^*_k\}} \sum_{\substack{y\in D^k_\eps\cup \{a^*_k\},\\ y\leftrightarrow x} } \left(f_2(x)-f_2(y)\right)^2\right)^{1/2}\cdot\|f_2\|_{L^1\left(D^k_\eps\cup \{a^*_k\}, \;\nu^{(2)}_k\right)} \nonumber
\\
&\le & \EE^k(f, f)^{1/3}\|f\|^{4/3}_{L^1(E^k, m_k)}+\EE^k(f, f)^{1/2}\|f\|_{L^1(E^k, m_k)},
\end{eqnarray}
The desired conclusion thus follows immediately by selecting $\mu=1$ and $\nu=2$ in \cite[Corollary 2.12]{CKS}.

\end{proof}

With the Nash-type inequality established above, next we use Davies method to get a heat kernel upper bound for $(\EE^k, \mathcal{D}(\EE^k))$ that can be viewed as ``equicontinuity" of the heat kernels of $\{X^k, k\ge 1\}$ in $k$. For this purpose, we first rewrite \eqref{DF-RWVD-form} as follows:
\begin{align}\label{DF-RWVD-rewrite}
\left\{
\begin{aligned}
&\mathcal{D}(\EE^{k})=L^2(E^k, m_k)
\\
&\EE^{k}(f, f)= \frac{2^{2k}}{8}\sum_{x\in D_\eps^k}\sum_{y\leftrightarrow x}\left(f(y)-f(x)\right)^2m_k(x)+\frac{2^{2k}}{4}\sum_{x\in 2^{-k}\IZ_+}\sum_{y\leftrightarrow x}\left(f(y)-f(x)\right)^2m_k(x)
\\
&\qquad \quad\,\,\; +\frac{2^{2k}}{8}\sum_{\substack{y\leftrightarrow a^*_k\\ y\in D_\eps^k}}\left(f(y)-f(a^*_k)\right)^2\frac{2^{-2k}}{4}+\frac{2^{2k}}{4}\sum_{\substack{y\leftrightarrow a^*_k\\ y\in 2^{-k}\IZ_+}}\left(f(y)-f(a^*_k)\right)^2\frac{2^{-k}}{2}.
\end{aligned}
\right.
\end{align}
Now for every $k\in \mathbb{N}$, we define a metric $d_k(\cdot, \cdot)$ on $E^k$ as follows: 
\begin{equation}\label{def-dk}
d_k(x,y):=2^{-k}\times \text{number of edges along the shortest (geodesic) path between }x \text{ and }y \text{ in }G^k.
\end{equation}

In the next proposition as well as the  corollary following it, we establish an heat kernel upper bound estimate for $X^k$ using Davies's method. In the remainder of this paper, for every $k\in \mathbb{N}$,  we denote by $p_k(t,x, y)$ the transition density function of $(P_t^k)_{t\ge 0}$ with respect to $m_k$.

\begin{prop}\label{general-davies}
There exist $C_2>0$ independent of $k$, such that for any $k\ge k_0$ fixed in \eqref{def-k0} and any $\alpha_k\le 2^{k-1}$, 
\begin{equation}
p_k(t,x,y)\le 
C_2\left(\frac{1}{t}\vee \frac{1}{\sqrt{t}}\right)\exp\left(-\alpha_k d_k(x,y)+2t\alpha_k^2\right),\quad 0<t<\infty, \, x,y\in E^k.
\end{equation}
\end{prop}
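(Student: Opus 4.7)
The plan is to apply Davies's method to upgrade the on-diagonal $L^1\to L^\infty$ bound of Proposition \ref{unif-nash-1} into the claimed exponentially decaying off-diagonal estimate. First I would fix $\alpha_k \in (0, 2^{k-1}]$ and a bounded function $\psi \colon E^k \to \IR$ that is $\alpha_k$-Lipschitz with respect to the graph distance $d_k$, i.e., $|\psi(x)-\psi(y)| \le \alpha_k d_k(x,y)$ for all $x,y \in E^k$. Then I would introduce the perturbed (no longer symmetric) semigroup $T_t^\psi f := e^{-\psi} P_t^k(e^\psi f)$, whose quadratic form on $L^2(E^k, m_k)$ equals $\EE^{k,\psi}(f,g) := \EE^k(e^\psi f, e^{-\psi} g)$.

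The central step is the coercivity bound
\[
\EE^{k,\psi}(f,f) \ge \EE^k(f,f) - 2 \alpha_k^2 \|f\|_{L^2(E^k, m_k)}^2.
\]
For this I would expand, edge by edge in \eqref{DF-RWVD-rewrite}, the algebraic identity
\[
\bigl(e^{\psi(y)} f(y) - e^{\psi(x)} f(x)\bigr)\bigl(e^{-\psi(y)} f(y) - e^{-\psi(x)} f(x)\bigr) = (f(y)-f(x))^2 - 2 f(x) f(y)\bigl(\cosh(\psi(x)-\psi(y)) - 1\bigr).
\]
Since adjacent vertices satisfy $d_k(x,y) = 2^{-k}$ and $\alpha_k \le 2^{k-1}$, the Lipschitz bound yields $|\psi(x)-\psi(y)| \le 1/2$, on which range $\cosh \xi - 1 \le \xi^2$. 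Combining this with $2|f(x)f(y)| \le f(x)^2 + f(y)^2$ and with the definition \eqref{def-mk} of $m_k$, and using that the speed factor $2^{2k}$ in \eqref{DF-RWVD-rewrite} exactly cancels the squared jump length $2^{-2k}$, summing the correction over the oriented edges in the 2D block of $G^k$ contributes at most $\alpha_k^2 \sum_x m_k(x) f(x)^2$, and the 1D block contributes the same; adding the two gives the displayed inequality.

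With the coercivity in hand, the two Nash-type pieces \eqref{Z1-scale-3}--\eqref{Z2-scale-4} isolated in the proof of Proposition \ref{unif-nash-1} apply verbatim to $\EE^k + 2\alpha_k^2 \|\cdot\|_{L^2}^2$, which dominates $\EE^{k,\psi}$. Inserting this into the evolution identity $\frac{d}{dt}\|T_t^\psi f\|_{L^2}^2 = -2\EE^{k,\psi}(T_t^\psi f, T_t^\psi f)$ and invoking \cite[Corollary 2.12]{CKS} exactly as in the proof of Proposition \ref{unif-nash-1} yields $\|T_t^\psi\|_{1\to\infty} \le C_2(t^{-1} \vee t^{-1/2}) e^{2t\alpha_k^2}$ with a constant $C_2$ independent of $k$, $\psi$, and $t$. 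Since $T_t^\psi$ has kernel $e^{\psi(y)-\psi(x)} p_k(t,x,y)$ with respect to $m_k$, this reads
\[
p_k(t,x,y) \le C_2\bigl(t^{-1}\vee t^{-1/2}\bigr) \exp\bigl(\psi(x)-\psi(y) + 2t\alpha_k^2\bigr).
\]
Finally, specializing to the bounded $\alpha_k$-Lipschitz function $\psi(z) := -\alpha_k \bigl(d_k(z,y) \wedge d_k(x,y)\bigr)$, which satisfies $\psi(x) - \psi(y) = -\alpha_k d_k(x,y)$, delivers the proposition.

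The main obstacle is the coercivity estimate itself: conjugation by $e^\psi$ in the jump form generates a genuinely nonlinear ($\cosh$) correction, and extracting a $k$-independent constant hinges on both the uniform linearization window $\alpha_k \cdot 2^{-k} \le 1/2$ and a careful bookkeeping of the edge coefficients at the darning vertex $a^*_k$, where by \eqref{def-mk} the mass $m_k(a^*_k)$ mixes the 1D and 2D contributions. Any relaxation of $\alpha_k \le 2^{k-1}$ would cost an additional $k$-dependent multiplicative constant and destroy the uniform estimate needed for the tightness argument of Section \ref{S:4}.
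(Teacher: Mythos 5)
Your overall strategy is the same as the paper's (Davies's perturbation of the Nash bound from Proposition \ref{unif-nash-1}), your coercivity identity and edge-by-edge bookkeeping are correct (in fact the correction is at most $\alpha_k^2\|f\|_{L^2(m_k)}^2$, since at $a^*_k$ the 1D and 2D edge contributions add up exactly to $\alpha_k^2 m_k(a^*_k)f(a^*_k)^2$), and the final choice $\psi(z)=-\alpha_k(d_k(z,y)\wedge d_k(x,y))$ is fine. The gap is in the central analytic step: you cannot invoke \cite[Corollary 2.12]{CKS} ``exactly as in the proof of Proposition \ref{unif-nash-1}'' for the twisted semigroup $T_t^\psi=e^{-\psi}P_t^k(e^{\psi}\,\cdot\,)$. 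That corollary (and the Nash iteration behind it) is for symmetric \emph{Markovian} semigroups and uses the $L^1$-contraction $\|P_tf\|_{L^1}\le\|f\|_{L^1}$ along the flow in an essential way. $T_t^\psi$ is neither Markovian nor an $L^1$-contraction: $\frac{d}{dt}\|T_t^\psi f\|_{L^1}=-\EE^k(e^{\psi}T_t^\psi f,\,e^{-\psi})$, and the first-order (in $\psi$) part of this expression is of size up to $2^{2k}\cdot\alpha_k 2^{-k}=2^k\alpha_k$ per unit mass, not $\alpha_k^2$; it does not cancel for a general $\alpha_k$-Lipschitz $\psi$. Consequently your differential inequality $\frac{d}{dt}\|T_t^\psi f\|_{L^2}^2\le -2\EE^k(u_t,u_t)+4\alpha_k^2\|u_t\|_{L^2}^2$ cannot be closed by the Nash inequality, because the Nash inequality needs $\|u_t\|_{L^1}$ under control, and the $L^2$-coercivity you proved is too weak to provide that. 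In short: $L^2$-coercivity plus Nash gives $\|T_t^\psi\|_{2\to2}\le e^{2t\alpha_k^2}$, but not $\|T_t^\psi\|_{1\to\infty}$.

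This is exactly why the paper verifies the \emph{pointwise} carré-du-champ hypothesis of \cite[Corollary 3.28]{CKS}, namely that both $e^{-2\psi}\Gamma^k(e^{\psi},e^{\psi})$ and $e^{2\psi}\Gamma^k(e^{-\psi},e^{-\psi})$ have $m_k$-densities bounded by $2\alpha_k^2$ (i.e.\ $\Gamma^k(\psi_{k,n})\le\sqrt2\,\alpha_k$, using $e^s-1\le 2s$ on $0<s\le\tfrac12$, which is where $\alpha_k\le 2^{k-1}$ and \eqref{def-k0} enter): that pointwise bound is what drives the all-$p$ (Moser/Nash-type) iteration of \cite[Theorem 3.25]{CKS} for the non-Markovian semigroup $T_t^\psi$, and it is strictly stronger than your integrated $L^2$ estimate. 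To repair your argument, keep your set-up but replace the appeal to \cite[Corollary 2.12]{CKS} by \cite[Corollary 3.28]{CKS} and verify its pointwise hypothesis — your edge-wise computation does this with almost no change, once you bound $(e^{\pm(\psi(x)-\psi(y))}-1)^2$ instead of $\cosh(\psi(x)-\psi(y))-1$ and divide by $m_k(x)$ using \eqref{mk-ineq} — or else prove the $p$-dependent form inequalities for all $p\in[1,\infty)$ and rerun the iteration by hand; without one of these, the step from the on-diagonal bound to the claimed off-diagonal bound does not go through.
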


\begin{proof}
We prove this result using \cite[Corollary 3.28]{CKS}. Towards this, for each $k$, we set $\wh{\FF}^k:=\{h+c:\, h \in \mathcal{D}(\EE^k), \, h\text{ bounded, and }c\in \IR^1\}$. It is known that the regular symmetric Dirichlet form $(\EE^k, \mathcal{D}(\EE^k))$ can  be written in terms of an energy measure $\Gamma^k$ as follows:
$$
\EE^k(u, u)=\int_{E^k}\Gamma^k(u, u), \quad u\in \wh{\FF} ^k.
$$
where $\Gamma$ is a positive semidefinite symmetric bilinear form on $\FF$ with values being signed Radon measures on $E$, which is also called the energy measure.   Now we define $\wh{\FF}^k_\infty$ as a subset of $\psi \in \wh{\FF}^k$ satisfying the following conditions:
\begin{description}
\item{(i)} Both $e^{-2\psi}\Gamma^k(e^{\psi}, e^{\psi})$ and $e^{2\psi}\Gamma^k(e^{-\psi}, e^{-\psi})$  as measures are absolutely continuous with respect to $m_k$ on $E^k$. 
\item{(ii)} Furthermore,
\begin{equation}
\Gamma^k(\psi):=\left( \left\|\frac{de^{-2\psi }\;\Gamma^k(e^{\psi}, e^{\psi})}{dm_k} \right\|_\infty \vee  \left\|\frac{de^{2\psi }\;\Gamma^k(e^{-\psi}, e^{-\psi})}{dm_k} \right\|_\infty \right)^{1/2}<\infty.
\end{equation}
\end{description}
 We fixed  a constant $\alpha_k \le2^{k-1}$ and denote by
\begin{equation}\label{def-psi-kn}
\psi_{k, n}(x):=\alpha_k \cdot \left(d_k(x, a^*_k)\wedge n\right).
\end{equation}
In order to apply \cite[Corollary 3.28]{CKS}, we first check that for every $n$,  $\psi_{k,n}\in \wh{\FF}_\infty^k$. Notice that $\psi_{k,n}$ is a constant outside of a bounded domain of $E^k$, therefore it is in $\wh{\FF}^k$. Now we compute $e^{-2\psi_{k,n}}\Gamma^k(e^{\psi_{k,n}}, e^{\psi_{k,n}})$ as a measure.   Noting that $\psi_{k,n}(a^*_k)=0$, we first rewrite the expression of $\EE^k$ in \eqref{DF-RWVD-form} as follows: for $f\in \wt{\FF}^k_\infty$,
\begin{eqnarray}
&&\int_{E^k}\Gamma^k(f,f)=\EE^k(f, f) \nonumber
\\
&=& \frac{1}{8}\sum_{\substack{e^o_{xy}:\; e_{xy}\in G^k_e,\\ x,y\in D^k_\eps\cup \{a_k^*\} }} \left(f(x)-f(y)\right)^2 +\frac{2^k}{4}\sum_{\substack{e^o_{xy}:\;e_{xy}\in G^k_e,\\ x,y\in 2^{-k}\IZ_+\cup \{a_k^*\} }}\left(f(x)-f(y)\right)^2 \nonumber
\\
&=& \frac{2^{2k}}{8}\sum_{x\in D_\eps^k\cap A}  \sum_{y\leftrightarrow x}\left(f(y)-f(x)\right)^2m_k(x) +\frac{2^{2k}}{4}\sum_{x\in 2^{-k}\IZ_+\cap A}  \sum_{y\leftrightarrow x}\left(f(y) -f(x))\right)^2m_k(x) \nonumber
\\
&+&\mathbf{1}_{\{a^*_k\in A\}}\left(\frac{2^{2k}}{8}\sum_{\substack{y\leftrightarrow a^*_k\\ y\in D_\eps^k}}\left(f(y)-f(a^*_k)\right)^2\frac{2^{-2k}}{4}+\frac{2^{2k}}{4}\sum_{\substack{y\leftrightarrow a^*_k\\ y\in 2^{-k}\IZ_+}}\left(f(y)-f(a^*_k)\right)^2\frac{2^{-k}}{2}\right). \label{Prop3.2-rewrite-EEk}
\end{eqnarray}
On account of \eqref{Prop3.2-rewrite-EEk}, given any subset $A\subset E^k$,   we have
\begin{eqnarray}
&&  e^{-2\psi_{k,n}}\Gamma^k(e^{\psi_{k,n}}, e^{\psi_{k,n}}) (A) \nonumber
\\
&=&\frac{2^{2k}}{8}\sum_{x\in D_\eps^k\cap A} e^{-2\psi_{k,n}(x)}   \sum_{y\leftrightarrow x}\left(e^{\psi_{k,n}(y)}-e^{\psi_{k,n}(x)}\right)^2m_k(x) \nonumber
\\
&+&\frac{2^{2k}}{4}\sum_{x\in 2^{-k}\IZ_+\cap A} e^{-2\psi_{k,n}(x)}    \sum_{y\leftrightarrow x}\left(e^{\psi_{k,n}(y)}  -e^{\psi_{k,n}(x)} )\right)^2m_k(x) \nonumber
\\
&+&\mathbf{1}_{\{a^*_k\in A\}}\left(\frac{2^{2k}}{8}\sum_{\substack{y\leftrightarrow a^*_k\\ y\in D_\eps^k}}\left(e^{\psi_{k,n}(y)}-1\right)^2\frac{2^{-2k}}{4}+\frac{2^{2k}}{4}\sum_{\substack{y\leftrightarrow a^*_k\\ y\in 2^{-k}\IZ_+}}\left(e^{\psi_{k,n}(y)}-1\right)^2\frac{2^{-k}}{2}\right). \nonumber
\end{eqnarray}
In view of the definition of $m_k$ in \eqref{def-mk}, we further have
\begin{eqnarray}
&&  e^{-2\psi_{k,n}}\Gamma^k(e^{\psi_{k,n}}, e^{\psi_{k,n}}) (A) \nonumber
\\
&\le &\frac{1}{8}\sum_{x\in D_\eps^k\cap A} \sum_{y\leftrightarrow x} \left( e^{\alpha_k\left(d_k(y, a^*_k)\wedge n- d_k(x,a^*_k)\wedge n\right)}-1\right)^2 \nonumber
\\
&+&\frac{2^k}{4}\sum_{x\in 2^{-k}\IZ\cap A} \sum_{y\leftrightarrow x} \left( e^{\alpha_k\left(d_k(y, a^*_k)\wedge n- d_k(x,a^*_k)\wedge n\right)}-1\right)^2\nonumber
\\
&+&\mathbf{1}_{\{a^*_k\in A\}}\left(\frac{1}{32}\sum_{\substack{y\leftrightarrow a^*_k\\ y\in D_\eps^k}} \left(e^{\alpha_k\left(d_k(y,a^*_k)\wedge n\right)}-1\right)^2+\frac{2^{k}}{8}\sum_{\substack{y\leftrightarrow a^*_k\\ y\in 2^{-k}\IZ_+}} \left(e^{\alpha_k\left(d_k(y,a^*_k)\wedge n \right)}-1\right)^2\right). \label{computation-Gammak-1}
\end{eqnarray}
From the above it is clear that $e^{-2\psi_{k,n}}\Gamma^k(e^{\psi_{k,n}}, e^{\psi_{k,n}})$ is absolutely continuous with respect to $m_k$. By similar compution, one can show that so is $e^{2\psi_{k,n}}\Gamma^k(e^{-\psi_{k,n}}, e^{-\psi_{k,n}})$. Thus   $\psi_{k,n}$ satisifies the first condition in the definition of $\wh{\FF}^k_\infty$.

Next we verify that $\psi_{k,n}$ also satisfies the second condition of the definition of $\wh{\FF}^k_\infty$.  It is obvious from \eqref{def-mk} that
\begin{equation}\label{mk-ineq}
m_k(x)\ge \left\{
    \begin{aligned}
         &\frac{2^{-2k}}{4},  &x \in D^k_\eps;\\
        &\frac{2^{-k}}{2}, &x \in 2^{-k}\IZ_+\cup \{a^*_k\}.\\
    \end{aligned}
\right.
\end{equation}
Hence combining \eqref{computation-Gammak-1} and \eqref{mk-ineq}  shows that
\begin{eqnarray}
&&\left\|\frac{de^{-2\psi_{k,n} }\;\Gamma^k(e^{\psi_{k,n}}, e^{\psi_{k,n}})}{dm_k} \right\|_\infty  \nonumber
\\
&\le & \max_{x\in D_\eps^k } \left\{\frac{2^{2k}}{2}\sum_{y\leftrightarrow x}\left( e^{\alpha_k\left(d_k(y, a^*_k)
\wedge n- d_k(x,a^*_k)\wedge n\right)}-1\right)^2 \right\} \nonumber
\\
&\vee & \max_{x\in 2^{-k}\IZ} \left\{  \frac{2^{2k}}{2}\sum_{y\leftrightarrow x}\left( e^{\alpha_k\left(d_k(y, a^*_k)\wedge n- d_k(x,a^*_k)\wedge n\right)}-1\right)^2\right\} \nonumber
\\
&\vee & \left(\frac{2^k}{16}\sum_{\substack{y\leftrightarrow a^*_k\\ y\in D_\eps^k}} \left(e^{\alpha_k\left(d_k(y,a^*_k)\wedge n\right)}-1\right)^2+\frac{2^{2k}}{4}\sum_{\substack{y\leftrightarrow a^*_k\\ y\in 2^{-k}\IZ_+}} \left(e^{\alpha_k\left(d_k(y,a^*_k)\wedge n \right)}-1\right)^2\right). \label{Radon-nikodyn-1}
\end{eqnarray}
By similar computation one can show that
\begin{eqnarray}
&&\left\|\frac{de^{2\psi_{k,n} }\;\Gamma^k(e^{-\psi_{k,n}}, e^{-\psi_{k,n}})}{dm_k} \right\|_\infty \nonumber
\\
&\le & \max_{x\in D_\eps^k } \left\{\frac{2^{2k}}{2}\sum_{y\leftrightarrow x}\left( e^{\alpha_k\left(d_k(x, a^*_k)
\wedge n- d_k(y,a^*_k)\wedge n\right)}-1\right)^2 \right\} \nonumber
\\
&\vee & \max_{x\in 2^{-k}\IZ} \left\{  \frac{2^{2k}}{2}\sum_{y\leftrightarrow x}\left( e^{\alpha_k\left(d_k(x, a^*_k)\wedge n- d_k(y,a^*_k)\wedge n\right)}-1\right)^2\right\} \nonumber
\\
&\vee & \left(\frac{2^k}{16}\sum_{\substack{y\leftrightarrow a^*_k\\ y\in D_\eps^k}} \left(e^{-\alpha_k\left(d_k(y,a^*_k)\wedge n\right)}-1\right)^2+\frac{2^{2k}}{4}\sum_{\substack{y\leftrightarrow a^*_k\\ y\in 2^{-k}\IZ_+}} \left(e^{-\alpha_k\left(d_k(y,a^*_k)\wedge n \right)}-1\right)^2\right). \label{Radon-nikodyn-2}
\end{eqnarray}
Combining \eqref{Radon-nikodyn-1} and \eqref{Radon-nikodyn-2}  we get
\begin{align}
\Gamma^k(\psi_{k,n})&=\left( \left\|\frac{de^{-2\psi_{k,n} }\;\Gamma^k(e^{\psi_{k,n}}, e^{\psi_{k,n}})}{dm_k} \right\|_\infty \vee  \left\|\frac{de^{2\psi_{k,n} }\;\Gamma^k(e^{-\psi_{k,n}}, e^{-\psi_{k,n}})}{dm_k} \right\|_\infty \right)^{1/2} \nonumber
\\
&\le \left(\max_{x\in D_\eps^k } \left\{\frac{2^{2k}}{2}\sum_{y\leftrightarrow x}\left( e^{\alpha_k\left|d_k(x, a^*_k)
\wedge n- d_k(y,a^*_k)\wedge n\right|}-1\right)^2 \right\}\right)^{1/2} \nonumber
\\
& \vee \left( \max_{x\in 2^{-k}\IZ} \left\{  \frac{2^{2k}}{2}\sum_{y\leftrightarrow x}\left( e^{\alpha_k\left|d_k(x, a^*_k)\wedge n- d_k(y,a^*_k)\wedge n\right|}-1\right)^2\right\} \right)^{1/2}\nonumber
\\
&\vee  \left(\frac{2^k}{16}\sum_{\substack{y\leftrightarrow a^*_k\\ y\in D_\eps^k}} \left(e^{\alpha_k\left(d_k(y,a^*_k)\wedge n\right)}-1\right)^2+\frac{2^{2k}}{4}\sum_{\substack{y\leftrightarrow a^*_k\\ y\in 2^{-k}\IZ_+}} \left(e^{\alpha_k\left(d_k(y,a^*_k)\wedge n \right)}-1\right)^2\right)^{1/2}. \label{computation-psi-1}
\end{align}
In the following we claim that   all  three ``$(\cdots)^{1/2}$" terms on the right hand side of \eqref{computation-psi-1}   are bounded by $\sqrt{2}\alpha_k$, where $\alpha_k$ is fixed in the line above  \eqref{def-psi-kn}.  First of all, we  note that $e^x-1<2x$, for all $0<x<1/2$. Thus for all $k\in \mathbb{N}$,
\begin{equation}\label{alpha-computation-1}
e^{\alpha_k x}-1\le 2\alpha_k x,\quad \text{for }0<x\le 2^{-k}, \,\alpha_k\le 2^{k-1}.
\end{equation}
In view of  the definition of $d_k$, $d_k(x,y)\le 2^{-k}$ for any  $x\leftrightarrow y$. Thus for all $k\ge k_0\ge 1$,
\begin{eqnarray}
 &&\left(\max_{x\in D_\eps^k } \left\{\frac{2^{2k}}{2}\sum_{y\leftrightarrow x}\left( e^{\alpha_k\left|d_k(x, a^*_k)
\wedge n- d_k(y,a^*_k)\wedge n\right|}-1\right)^2 \right\}\right)^{1/2} \nonumber 
\\
&\le & \frac{2^k}{\sqrt{2}} \max_{x\in D_\eps^k }\left\{ \sum_{y\leftrightarrow x} \left( e^{\alpha_k\left|d_k(x, a^*_k)
\wedge n- d_k(y,a^*_k)\wedge n\right|}-1\right) \right\}\nonumber
\\
\eqref{alpha-computation-1} &\le & \frac{2^k}{\sqrt{2}} \cdot \max_{x\in D_\eps^k } \sum_{y\leftrightarrow x} 2\left(\alpha_k\left|d_k(x, a^*_k)
\wedge n- d_k(y,a^*_k)\wedge n\right|\right) \nonumber
\\
 &\le &   \frac{2^k}{\sqrt{2}}  \cdot 2\alpha_k\cdot 2^{-k} \le \sqrt{2}\alpha_k\label{psi-bound-1}
\end{eqnarray}
By similar computation one can show that for the second ``$(\cdots)^{1/2}$",
\begin{equation}\label{psi-bound-2}
\left( \max_{x\in 2^{-k}\IZ} \left\{  \frac{2^{2k}}{2}\sum_{y\leftrightarrow x}\left( e^{\alpha_k\left|d_k(x, a^*_k)\wedge n- d_k(y,a^*_k)\wedge n\right|}-1\right)^2\right\} \right)^{1/2} \le \sqrt{2}\alpha_k.
\end{equation}
Finally, to bound the last ``$(\dots)^{1/2}$" on the right hand side of \eqref{computation-psi-1}, we have for  $0<\eps<1/64$ and $k\ge k_0$ that
\begin{eqnarray}
&& \left(\frac{2^k}{16}\sum_{\substack{y\leftrightarrow a^*_k\\ y\in D_\eps^k}} \left(e^{\alpha_k\left(d_k(y,a^*_k)\wedge n\right)}-1\right)^2+\frac{2^{2k}}{4}\sum_{\substack{y\leftrightarrow a^*_k\\ y\in 2^{-k}\IZ_+}} \left(e^{\alpha_k\left(d_k(y,a^*_k)\wedge n \right)}-1\right)^2\right)^{1/2}\nonumber
\\
(\text{Proposition }\ref{P:2.3})&\le & \left(\frac{2^{k}}{16}\cdot \left(56\eps\cdot 2^{k}+28\right)\cdot \max_{\substack{y\leftrightarrow a^*_k\\ y\in D_\eps^k}}\left( e^{\alpha_k\left(d_k(y,a^*_k)\wedge n\right)}-1\right)^2+\frac{2^{2k}}{4}\cdot \left(e^{\alpha_k\cdot 2^{-k}}-1\right)^2\right)^{1/2} \nonumber
\\
&\le &\left( \left(\frac{7\eps}{2}+\frac{7\cdot 2^{-k}}{4}\right)\cdot 2^{2k}\cdot \left(e^{\alpha_k\cdot 2^{-k}}-1\right)^2+\frac{2^{2k}}{4}\left(e^{\alpha_k\cdot 2^{-k}}-1\right)^2\right)^{1/2} \nonumber
\\
\eqref{def-k0}&< & \left(\frac{2^{2k}}{2}\left(e^{\alpha_k\cdot 2^{-k}}-1\right)^2\right)^{1/2}\le \frac{2^k}{\sqrt{2}}\left(e^{\alpha_k\cdot 2^{-k}}-1\right)\stackrel{\eqref{alpha-computation-1}}{\le} \sqrt{2}\alpha_k.\label{psi-bound-3}
\end{eqnarray}
Combining \eqref{psi-bound-1}, \eqref{psi-bound-2}, and \eqref{psi-bound-3}, we have for $k\ge k_0$,
\begin{equation}
\Gamma^k(\psi_{k,n})\le \sqrt{2}\alpha_k.
\end{equation}
This also shows that $\psi_{k,n}$ is indeed in the class $\wh{\FF}_\infty$ defined in the second paragraph of the proof. Finally, to  complete using \cite[Corollary 3.28]{CKS},  by Proposition \ref{unif-nash-1} and the choice of $\psi_{k,n}$ in \eqref{def-psi-kn}, we have that for every $x\in E^k$ and every $n$, there exists a constant $C_2>0$ only depending through the $C_1$  given  in Proposition \ref{unif-nash-1}  but not $k$ such that
\begin{equation*}
p_k(t,x,y)\le 
C_2\left(\frac{1}{t}\vee \frac{1}{\sqrt{t}}\right)  \exp\left(-\alpha_k \left(d_k(x,y)\wedge n\right)+2t\alpha_k^2\right), \quad \text{for }0<t<\infty, m_k\text{-a.e.  }y\in E^k.
\end{equation*}
Since all singletons in $E^k$ have strictly positive measures, we may drop the ``a.e." in the inequality above. Finally, the proof is completed by letting $n\rightarrow \infty$.
\end{proof}

\begin{co}\label{HKUB}
There exist $C_3>0$ independent of $k$, such that for any $k\ge k_0$ fixed in \eqref{def-k0} and any $\alpha_k\le 2^{k-1}$, it holds for all $x,y\in E^k$ and all $t\ge 0$ that
\begin{equation}
p_k(t,x,y)\le \left\{
\begin{aligned}
&C_3\left(\frac{1}{t}\vee \frac{1}{\sqrt{t}}\right)e^{-d_k(x,y)^2/(32t)}, & \text{ when }d_k(x,y)\le 16\cdot 2^kt;
\\
& C_3\left( \frac{1}{t}\vee \frac{1}{\sqrt{t}} \right) e^{-2^k d_k(x,y)/2},
& \text{ when }d_k(x,y)\ge 16\cdot 2^kt.
\end{aligned}
\right.
\end{equation}
In particular, given any $T>0$, there exists $C_4>0$ such that
\begin{equation}
p_k(t, x, y)\le \frac{C_4}{t}\left( e^{-d_k(x,y)^2/(32t)} + e^{-2^k d_k(x,y)/2} \right), \, \text{for all }(t, x, y)\in (0, T]\times E^k\times E^k.
\end{equation}
\end{co}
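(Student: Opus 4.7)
The plan is to feed Proposition~\ref{general-davies} back to itself by optimizing the free parameter $\alpha_k\in(0,2^{k-1}]$ in the exponent $-\alpha_k d_k(x,y)+2t\alpha_k^2$, using a case split dictated by whether the unconstrained minimizer $\alpha_k^* = d_k(x,y)/(4t)$ violates the ceiling $2^{k-1}$. The threshold $d_k(x,y) = 16\cdot 2^k t$ in the statement is essentially this constraint written out, with a convenient safety factor built in so that the resulting constants fall into place.

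For the near-diagonal regime $d_k(x,y)\le 16\cdot 2^k t$ I would split once more. When $d_k(x,y)\le 4\cdot 2^k t$ the slightly suboptimal but still admissible choice $\alpha_k = d_k(x,y)/(8t)\le 2^{k-1}$ makes the exponent equal to $-d_k(x,y)^2/(8t)+d_k(x,y)^2/(32t) = -3d_k(x,y)^2/(32t)$, which beats $-d_k(x,y)^2/(32t)$. For $4\cdot 2^k t<d_k(x,y)\le 16\cdot 2^k t$ the optimum is blocked by the ceiling, so I take $\alpha_k = 2^{k-1}$; the bound $t\le d_k(x,y)/(4\cdot 2^k)$ forces $2t\alpha_k^2\le 2^{k-1}d_k(x,y)/4$, yielding exponent $\le -\tfrac34\cdot 2^{k-1}d_k(x,y)$, and one checks via $d_k(x,y)\le 16\cdot 2^k t$ that this is $\le -d_k(x,y)^2/(32t)$. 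In the far regime $d_k(x,y)\ge 16\cdot 2^k t$ I keep $\alpha_k=2^{k-1}$ and now use $t\le d_k(x,y)/(16\cdot 2^k)$ to absorb $2t\alpha_k^2\le 2^{k-1}d_k(x,y)/16$ into the linear term, producing an exponent $\le -\tfrac{15}{16}\cdot 2^{k-1}d_k(x,y)$, which in turn is $\le -2^k d_k(x,y)/2$ after enlarging $C_3$ (the $\tfrac{15}{32}$ versus $\tfrac12$ discrepancy is absorbed either by a prefactor or by using a very slightly smaller $\alpha_k$).

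The consolidated bound on $(0,T]$ is then a bookkeeping step: for $t\in(0,T]$ one has $1/t\vee 1/\sqrt t\le (1\vee\sqrt T)/t$, so both prefactors collapse into $C_4/t$, and bounding the maximum of two exponentials by their sum gives the single estimate valid on the whole set $(0,T]\times E^k\times E^k$. The only genuine subtlety is the ceiling $\alpha_k\le 2^{k-1}$ inherited from Proposition~\ref{general-davies}; this is exactly what forces the \emph{two}-regime structure of the bound (Gaussian short range, exponential/``Poisson'' long range) rather than a single Gaussian estimate, and it is where one must be careful to pick compatible constants. Beyond that, everything reduces to elementary one-variable inequalities.
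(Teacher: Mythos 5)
Your overall route is the same as the paper's: substitute a suitably chosen $\alpha_k$ into Proposition~\ref{general-davies} and split at $d_k(x,y)=16\cdot 2^k t$. The problem is that the constants $1/(32t)$ and $2^k/2$ in the corollary are calibrated to the substitution $\alpha_k=\frac{d_k(x,y)}{16t}\wedge 2^k$, which is what the paper actually plugs in, whereas you insist on the cap $2^{k-1}$, and then two of your verifications are false. (i) In the subcase $4\cdot 2^k t<d_k(x,y)\le 16\cdot 2^k t$ your exponent $-\tfrac34\,2^{k-1}d_k(x,y)=-\tfrac38\,2^k d_k(x,y)$ satisfies $-\tfrac38\,2^k d_k(x,y)\le -d_k(x,y)^2/(32t)$ if and only if $d_k(x,y)\le 12\cdot 2^k t$; on the strip $12\cdot 2^k t<d_k(x,y)\le 16\cdot 2^k t$ the claimed check fails, and the deficit in the exponent is as large as $2\cdot 4^k t$ (take $d_k(x,y)=16\cdot 2^k t$), which is unbounded in $k$ and $t$ and hence cannot be hidden in $C_3$. (ii) In the far regime the gap between your exponent $-\tfrac{15}{32}2^k d_k(x,y)$ and the target $-\tfrac12 2^k d_k(x,y)$ is a factor $e^{2^k d_k(x,y)/32}$, again unbounded, so ``enlarging $C_3$'' does not repair it; and taking a \emph{smaller} $\alpha_k$ goes the wrong way, since in this regime $d_k(x,y)/(4t)\ge 4\cdot 2^k$, so $-\alpha d_k(x,y)+2t\alpha^2$ is decreasing in $\alpha$ on $(0,2^{k-1}]$ and the coefficient $\tfrac12 2^k$ is only reachable with $\alpha_k>2^{k-1}$. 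Indeed no admissible choice rescues the stated constants: at $d_k(x,y)=16\cdot 2^k t$ the best exponent available under the cap $2^{k-1}$ is $-\tfrac{15}{32}2^k d_k(x,y)$, strictly weaker than both target exponents there.

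The paper's proof avoids this by taking $\alpha_k=\frac{d_k(x_0,y_0)}{16t_0}\wedge 2^k$: in the regime $d_k\ge 16\cdot 2^k t$ the exponent becomes $-2^k d_k+2t\,4^k\le -\tfrac78 2^k d_k\le -\tfrac12 2^k d_k$, and in the complementary regime it is $-\tfrac{d_k^2}{16t}+\tfrac{d_k^2}{128t}\le -\tfrac{d_k^2}{32t}$, with no further subdivision needed. (You are right that Proposition~\ref{general-davies} is stated with the cap $2^{k-1}$, so the paper's use of values up to $2^k$ is itself a wrinkle; but if one genuinely keeps the cap $2^{k-1}$, the conclusion must be weakened, e.g.\ to bounds of the form $e^{-d_k(x,y)^2/(64t)}$ and $e^{-15\cdot 2^k d_k(x,y)/32}$, which is not the statement to be proved.) Your first subcase ($d_k\le 4\cdot 2^k t$ with $\alpha_k=d_k(x,y)/(8t)$) is correct, and so is the final reduction of the second display to the first via $1/t\vee 1/\sqrt{t}\le (1\vee\sqrt{T})/t$ on $(0,T]$ and bounding the maximum of the two exponentials by their sum; the genuine gaps are exactly the two items above.
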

\begin{proof}
To prove this, in Proposition \ref{general-davies}, given any $k\ge k_0$, for any fixed $t_0>0$ and any pair of $x_0, y_0\in E^k$, we take 
\begin{equation*}
\alpha_k:=\frac{d_k(x_0,y_0)}{16t_0}\wedge 2^{k}.
\end{equation*}
Then Proposition \ref{general-davies} yields that  for all $t>0$ and $x,y\in E^k$, 
\begin{align*}
p_k(t_0,x, y)\le C_2 \left(\frac{1}{t_0}\vee \frac{1}{\sqrt{t_0}}\right)\exp\left[-\left(\frac{d_k(x_0,y_0)}{16t_0}\wedge 2^{-k}\right) d_k(x,y)+2t_0\left( \frac{d_k(x_0,y_0)}{16t_0}\wedge 2^{-k}   \right)^2\right],
\end{align*}
where $C_2$ only depends on the $C_1$ in Proposition \ref{unif-nash-1}. Taking $x=x_0$ and $y=y_0$ yields that
\begin{align}\label{C:3.3-compute-1}
p_k(t_0,x_0, y_0)\le C_2 \left(\frac{1}{t_0}\vee \frac{1}{\sqrt{t_0}}\right)\exp\left[-\left(\frac{d_k(x_0,y_0)}{16t_0}\wedge 2^{-k}\right) d_k(x_0,y_0)+2t_0\left( \frac{d_k(x_0,y_0)}{16t_0}\wedge 2^{-k}   \right)^2\right]. 
\end{align}
Now we divide our discussion into two cases:
\\
{\it Case 1.} $d_k(x_0, y_0)\ge 16\cdot 2^k t_0$. Then for the exponential term on the right hand side  of  \eqref{C:3.3-compute-1}   it holds
\begin{eqnarray}
&&-\left(\frac{d_k(x_0,y_0)}{16t_0}\wedge 2^{-k}\right) d_k(x_0,y_0)+2t_0\left( \frac{d_k(x_0,y_0)}{16t_0}\wedge 2^{-k}   \right)^2   \nonumber
\\
&\le & -2^k d_k(x_0, y_0)+2t_0 (2^k)^2 \nonumber
\\
&\le & -2^k d_k(x_0, y_0)+2t_0 \frac{d_k(x_0, y_0)}{16t_0}\cdot 2^k \nonumber
\\
&\le &-2^k d_k(x_0, y_0)+\frac{1}{8}\cdot 2^k\cdot d_k(x_0, y_0)\le -\frac{2^k}{2}d_k(x_0, y_0). \label{C:3.3-compute-2}
\end{eqnarray}
{\it Case 2.} $d_k(x_0, y_0)\le 16\cdot 2^k t_0$. For this case, 
\begin{eqnarray}
&&-\left(\frac{d_k(x_0,y_0)}{16t_0}\wedge 2^{-k}\right) d_k(x_0,y_0)+2t_0\left( \frac{d_k(x_0,y_0)}{16t_0}\wedge 2^{-k}   \right)^2   \nonumber
\\
&\le & -\frac{d_k(x_0, y_0)^2}{16t_0} +2t_0\left( \frac{d_k(x_0, y_0)}{16t_0} \right)^2 \le -\frac{d_k(x_0, y_0)^2}{32t_0}. \label{C:3.3-compute-3}
\end{eqnarray}
The proof is completed by substituting the power of the exponential term on the right hand side of \eqref{C:3.3-compute-1} with \eqref{C:3.3-compute-2} and \eqref{C:3.3-compute-3} for the two cases, respectively. 
\end{proof}

\section{Tightness of random walks on spaces with varying dimensions} \label{S:4}

In order to use Corollary \ref{HKUB} to establish the tightness of $\{X^k\}_{k\ge 1}$, we first record a proposition in \cite[Chapter VI Theorem 3.21]{JS}, which provides a criterion for tightness for c\`{a}dl\`{a}g processes  adapted to our setting. As a standard notation, given a metric $d(\cdot,\, \cdot)$, we denote by
\begin{equation*}
w_d(x,\, \theta,\, T):=\inf_{\{t_i\}_{1\le i\le n}\in \Pi} \max_{1\le i\le n} \sup_{s, t\in [t_i, t_{i-1}]} d(x(s), x(t)),
\end{equation*}
where $\Pi$ is the collection of all possible partitions of the form $0=t_0<t_1<\cdots <t_{n-1}<T\le t_n$ with $\min_{1\le i\le n} (t_i-t_{i-1})\ge \theta$ and $n\ge 1$.
\begin{prop}[Chapter VI, Theorem 3.21 in \cite{JS}]\label{tightness-criterion}
Let $\{Y_k, \IP^y\}_{k\ge 1}$ be a a sequence of c\`{a}dl\`{a}g processes on state space $E$.  Given $y\in E$, the laws of $\{Y_k, \IP^y\}_{k\ge 1}$ are tight in the Skorokhod space $D([0, T],E, \rho)$  if and only if
\begin{description}
\item{(i).} For any $T>0$, $\delta>0$, there exist $N_1\in \mathbb{N}$ and $M>0$ such that for all $k\ge N_1$,
\begin{equation}
\IP^y\left[\sup_{t\in [0, T]}\big|Y^k_t\big|_\rho>M\right]<\delta. 
\end{equation} 
\item{(ii).} For any $T>0$, $\delta_1, \delta_2>0$, there exist $\delta_3>0$ and $N_2>0$ such that for all $k\ge N_2$,
\begin{equation}
\IP^y\left[w_\rho\left( Y^k  ,\, \delta_3,\, T\right)>\delta_1\right]<\delta_2. 
\end{equation}
\end{description}
\end{prop}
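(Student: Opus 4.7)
The plan is to prove this tightness criterion by combining Prohorov's theorem with a Skorokhod-space analog of the Arzelà-Ascoli characterization of relatively compact sets in $\mathbf{D}([0,T], E, \rho)$. Since the state space $E$ is locally compact and $\rho$-closed balls $\{x \in E : |x|_\rho \le M\}$ are compact in $E$, the standard Jacod-Shiryaev framework applies.

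For the \emph{only if} direction, I would assume the laws of $\{Y_k, \IP^y\}$ are tight. Prohorov's theorem then yields, for each $\delta>0$, a relatively compact set $K_\delta \subset \mathbf{D}([0,T], E, \rho)$ with $\IP^y(Y_k\in K_\delta)\ge 1-\delta$ for all $k$. The characterization of relatively compact sets in the Skorokhod space (Chapter VI Theorem 1.14 of \cite{JS}) guarantees that any such $K_\delta$ satisfies $\sup_{x\in K_\delta}\sup_{t\in [0,T]} |x(t)|_\rho<\infty$ and $\lim_{\theta\downarrow 0}\sup_{x\in K_\delta} w_\rho(x,\theta,T)=0$. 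Applying these pathwise bounds to the random element $Y_k$ and choosing $M$ large or $\delta_3$ small accordingly immediately yields (i) and (ii).

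For the \emph{if} direction, I would construct the relatively compact sets explicitly from conditions (i) and (ii). Given $\delta>0$, use (i) to pick $M$ with $\IP^y(\sup_{t\le T}|Y_k^t|_\rho >M)<\delta/2$ for $k\ge N_1$, and use (ii) iteratively to pick a sequence $\theta_n\downarrow 0$ and $N(n)$ such that $\IP^y(w_\rho(Y_k,\theta_n, T)>1/n)<\delta\cdot 2^{-n-1}$ for $k\ge N(n)$. Then the set
\[
K_\delta := \Bigl\{x \in \mathbf{D}([0,T], E, \rho) : \sup_{t\in [0,T]} |x(t)|_\rho \le M,\ w_\rho(x,\theta_n,T)\le 1/n \ \forall n\ge 1\Bigr\}
\]
is relatively compact by the same Skorokhod-space compactness criterion, and a union bound plus the local compactness of $E$ gives $\IP^y(Y_k\in K_\delta)\ge 1-\delta$ for all sufficiently large $k$; the finitely many small $k$ are handled individually since a finite collection of probability measures is automatically tight. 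Prohorov's theorem then concludes.

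The main obstacle is the \emph{if} direction, and specifically the need to verify that the set $K_\delta$ built from a sup-norm cap and a sequence of modulus bounds is genuinely relatively compact in the Skorokhod topology on $\mathbf{D}([0,T], E, \rho)$. This requires the non-trivial Arzelà-Ascoli-type theorem for càdlàg paths in a locally compact metric space, together with the observation that compactness of $\rho$-balls in $E$ is needed to get compactness of the range of the paths in $E$. Once the compactness criterion is in place, the probabilistic bookkeeping is routine. Since this is a standard criterion, I would simply cite \cite[Chapter VI Theorem 3.21]{JS} rather than rewrite the argument in full.
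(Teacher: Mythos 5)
The paper gives no proof of this proposition at all --- it is quoted directly as Chapter VI, Theorem 3.21 of \cite{JS} --- so your plan to cite that result is exactly what the paper does, and your sketch is the standard Prohorov/Arzel\`{a}--Ascoli argument behind it. The sketch is essentially right, with one bookkeeping caveat in the ``if'' direction: since your thresholds $N(n)$ may tend to infinity, no single $k$ eventually satisfies all infinitely many modulus constraints defining $K_\delta$, so one should first upgrade condition (ii) to hold for \emph{all} $k$ by further shrinking $\delta_3$ (using that $w_\rho(x,\theta,T)$ is nondecreasing in $\theta$ and tends to $0$ as $\theta\downarrow 0$ for every fixed c\`{a}dl\`{a}g path, hence $\IP^y\left[w_\rho(Y_k,\theta,T)>\delta_1\right]\to 0$ for each fixed $k$), after which the union bound goes through for every $k$.
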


In order to establish the tightness of $\{X_n\}_{n\ge 1}$ by veritying the two conditions in Proposition \ref{tightness-criterion} using the heat kernel upper bounds in Corollary \ref{HKUB}, we first prepare the following simple lemma which will be used later in this paper. 

\begin{lem}\label{L:strong-markov}
Given any $T, M>0$, for any sufficiently large $k\ge k_0$ specified in \eqref{def-k0} such that $2^{-k}<T$, it holds for all $x\in E^k$  that
\begin{align}
\IP^x\left[\sup_{t\in [0, T]}|X^k_t|_\rho\ge M\right]&\le \IP^x\left[   \sup_{t\in [0, 8^{-k}]} |X^k_t|_\rho\ge M \right]+\IP^x\left[\left|X_T^k\right|_\rho \ge \frac{M}{2}\right] \nonumber
\\
& +\IP^x\left[ T-8^{-k}\le \tau_M  \le T,\, \left|X^k_T\right|_\rho\le \frac{M}{2}\right]  \nonumber
\\
&+\IP^x\left[8^{-k}\le \tau_M\le T-8^{-k},\, \left|X^k_T\right|_\rho\le  \frac{M}{2} \right],  \nonumber
\end{align}
where $\tau_M:=\inf\{t>0: \;|X^k_t|_\rho\ge M\}$.
\end{lem}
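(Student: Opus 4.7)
The plan is a purely set-theoretic decomposition. Since $X^k$ is c\`{a}dl\`{a}g, one has $\{\sup_{t\in[0,T]}|X^k_t|_\rho\ge M\}=\{\tau_M\le T\}$: the inclusion $\supset$ uses right-continuity at $\tau_M$ (so that $|X^k_{\tau_M}|_\rho\ge M$), while $\subset$ is immediate from the definition of $\tau_M$. I would first split $\{\tau_M\le T\}$ according to whether $|X^k_T|_\rho\ge M/2$; the ``high-endpoint'' part is contained in $\{|X^k_T|_\rho\ge M/2\}$, which is the second term on the right-hand side. On the ``low-endpoint'' part $\{\tau_M\le T,\,|X^k_T|_\rho< M/2\}$ I would further partition $\tau_M$ into the three regions $[0,8^{-k}]$, $(8^{-k},T-8^{-k})$ and $[T-8^{-k},T]$, which cover $[0,T]$ because the hypothesis $2^{-k}<T$ gives $8^{-k}<T$.

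On the sub-event $\{\tau_M\le 8^{-k}\}$ one has $\sup_{t\in[0,8^{-k}]}|X^k_t|_\rho\ge M$ by definition of $\tau_M$, so this piece is contained in the first term on the right-hand side. The sub-event $\{8^{-k}<\tau_M\le T-8^{-k},\,|X^k_T|_\rho<M/2\}$ is contained in the fourth term (enlarging the strict inequalities to non-strict ones only enlarges the set), and symmetrically the last sub-event lies in the third term. A single union bound then yields the claimed inequality.

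This step is entirely a matter of careful bookkeeping on set inclusions, and I expect no substantive obstacle. The label ``strong-markov'' on the lemma is suggestive of its intended downstream use: in the verification of condition (i) of Proposition \ref{tightness-criterion}, the third and fourth terms will be controlled by conditioning on $\FF_{\tau_M}$, so that each reduces, via the strong Markov property, to the probability that starting from a point of $\rho$-distance at least $M$ from $a^*$, the process $X^k$ enters $\{|\cdot|_\rho\le M/2\}$ within a time window of length at most $8^{-k}$; this probability will then be controlled by the uniform heat kernel upper bound of Corollary \ref{HKUB}.
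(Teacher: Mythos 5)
Your proposal is correct and follows essentially the same route as the paper: a purely set-theoretic decomposition of the event (splitting on whether the excursion occurs before time $8^{-k}$, on whether $|X^k_T|_\rho\ge M/2$, and on the location of $\tau_M$), followed by a union bound, with right-continuity/piecewise-constancy of the paths used to pass between the supremum event and $\{\tau_M\le T\}$ exactly as the paper implicitly does. The only cosmetic imprecision is that covering $[0,T]$ by the three windows needs $2\cdot 8^{-k}\le T$ rather than just $8^{-k}<T$, which indeed holds since $k\ge 1$ and $2^{-k}<T$.
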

\begin{proof}
By inclusions of events, we have
\begin{eqnarray}
&&\IP^x\left[\sup_{t\in [0, T]}|X^k_t|_\rho\ge M\right] \nonumber
\\
&=&\IP^x\left[\sup_{t\in [0, 8^{-k}]}|X^k_t|_\rho\ge M\right] +\IP^x\left[\sup_{t\in [8^{-k}, T]}|X^k_t|_\rho\ge M,\sup_{t\in [0, 8^{-k}]}|X^k_t|_\rho < M \right] \nonumber
\\
&=&\IP^x\left[\sup_{t\in [0, 8^{-k}]}|X^k_t|_\rho\ge M\right] +\IP^x\left[\sup_{t\in [8^{-k}, T]}|X^k_t|_\rho\ge M,\,|X_T^k|_\rho\ge \frac{M}{2},\sup_{t\in [0, 8^{-k}]}|X^k_t|_\rho <M \right] \nonumber
\\
&+& \IP^x\left[\sup_{t\in [8^{-k}, T]}|X^k_t|_\rho\ge M,\, |X_T^k|_\rho\le \frac{M}{2},\sup_{t\in [0, 8^{-k}]}|X^k_t|_\rho < M \right] \nonumber
\\
 &\le & \IP^x\left[\sup_{t\in [0, 8^{-k}]}|X^k_t|_\rho\ge M\right] +\IP^x\left[\left|X_T^k\right|\rho \ge \frac{M}{2}\right] +\IP^x\left[8^{-k}\le \tau_M\le T,\, \left|X^k_T\right|_\rho \le  \frac{M}{2} \right] \nonumber
 \\
&\le & \IP^x\left[\sup_{t\in [0, 8^{-k}]}|X^k_t|_\rho\ge M\right] +\IP^x\left[\left|X_T^k\right|\rho \ge \frac{M}{2}\right] +\IP^x\left[8^{-k}\le \tau_M\le T-8^{-k},\, \left|X^k_T\right|_\rho\le  \frac{M}{2} \right]\nonumber
\\
&+& \IP^x\left[T-8^{-k}\le \tau_M\le T,\, \left|X^k_T\right|_\rho\le \frac{M}{2} \right]. \label{computation-strong-Markov-1}
\end{eqnarray}
\end{proof}

In the next few propositions,  by verifying the two conditions in Proposition \ref{tightness-criterion} respectively, we establish the tightness of the laws of $\{X^k_t, \IP^{a^*_k}\}_{k\ge 1}$. 

\begin{prop}\label{P:3.6}
For any $\delta>0$, any $T>0$, there exists $M_1>0$ such that for all $k\ge k_0$ specified in \eqref{def-k0}: 
\begin{equation}
\sup_{y\in E^k}\IP^y\left[ \sup_{t\in [0, 8^{-k}]} \rho\left(X_0^k, X_t^k\right)\ge M_1\right]<\delta.
\end{equation}
\end{prop}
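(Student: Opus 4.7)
The plan is to exploit the fact that $X^k$ is a pure-jump Markov process with \emph{constant} speed $\lambda_k = 2^{2k}$ (Proposition \ref{jump-distribution-Xk}), combined with the observation that individual jumps of $X^k$ move the process by at most $2^{-k}$ in $\rho$-distance. Since the time horizon $8^{-k}$ is far shorter than the mean holding time $\lambda_k^{-1} = 4^{-k}$, the expected number of jumps on $[0,8^{-k}]$ is merely $\lambda_k \cdot 8^{-k} = 2^{-k}$, and a crude Markov-inequality bound on this jump count yields the proposition, uniformly in $y\in E^k$.

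For the jump-size bound I would verify that $\rho(x,y)\le 2^{-k}$ for every edge $e_{xy}\in G^k_e$. If $x,y$ are both in $D^k_\eps$ or both in $2^{-k}\IZ_+$, \eqref{e:1.1} together with $|x-y|=2^{-k}$ gives $\rho(x,y)\le 2^{-k}$. For an edge $e_{xa^*_k}$ with $x\in D^k_\eps$, the construction of $G^k_e$ supplies some $z\in 2^{-k}\IZ^2\cap B_\eps$ with $|x-z|=2^{-k}$, which forces $|x|\le \eps+2^{-k}$, and hence $\rho(x,a^*_k)=|x|_\rho=|x|-\eps\le 2^{-k}$. For $y=a^*_k$ and $x=2^{-k}\in 2^{-k}\IZ_+$, trivially $\rho(a^*_k,x)=2^{-k}$.

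Since $X^k$ is piecewise constant between jumps, the triangle inequality gives
$$\sup_{t\in[0,8^{-k}]}\rho(X^k_0,X^k_t) \le 2^{-k}\,N_k,$$
where $N_k$ denotes the total number of jumps of $X^k$ on $[0,8^{-k}]$. Because $\lambda_k$ is independent of state, under every $\IP^y$ the count $N_k$ is $\mathrm{Poisson}(2^{-k})$, and Markov's inequality then gives
$$\IP^y\!\left[\sup_{t\in[0,8^{-k}]}\rho(X^k_0,X^k_t)\ge M_1\right] \le \IP\!\left[N_k\ge M_1\,2^k\right] \le \frac{\IE[N_k]}{M_1\,2^k} = \frac{1}{M_1\,4^k} \le \frac{1}{M_1\,4^{k_0}}.$$
Taking $M_1 > (\delta\cdot 4^{k_0})^{-1}$ finishes the argument, uniformly in $y\in E^k$ and $k\ge k_0$.

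There is essentially no substantive obstacle here: the time scale $8^{-k}$ is chosen precisely so that the expected jump count collapses with $k$, and the uniform jump-size bound makes the Markov argument routine. The heat-kernel estimates of Corollary \ref{HKUB} would yield the same conclusion but are unnecessarily heavy for this short-time regime; they instead do the real work in the companion estimates controlling the fixed-time and intermediate-time probabilities appearing in the decomposition of Lemma \ref{L:strong-markov}.
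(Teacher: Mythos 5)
Your argument is correct and follows essentially the same route as the paper: bound $\sup_{t\in[0,8^{-k}]}\rho(X^k_0,X^k_t)$ by $2^{-k}$ times the number of jumps, note that the jump count over $[0,8^{-k}]$ is Poisson with mean $2^{2k}\cdot 8^{-k}=2^{-k}$ uniformly in the starting point, and then apply a tail bound. The only difference is cosmetic: the paper estimates the Poisson tail via Stirling's formula, whereas you use Markov's inequality, which suffices here (and your explicit verification that every edge of $G^k$ has $\rho$-length at most $2^{-k}$, including edges meeting $a^*_k$, is a welcome detail the paper leaves implicit in the inequality $d_k\ge\rho$).
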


\begin{proof}
We denote by $T_0^k:=0$, and 
\begin{equation*}
T_l^k := \inf\{t>T_{l-1}^k:\; X^k_{l} \neq X^k_l\},\quad \text{for }l=1,2,\dots,
\end{equation*}
i.e., $T_l^k$ is the $l^{\text{th}}$ holding time of $X^k$, and $\{T^k_l\}_{l\ge 1}$ are i.i.d. exponential random variables, each with mean $2^{-k}$.  We then denote by $N_t^k:=\sup\{l\ge 1:T_l^k\le t\}$.  It is clear that $N_t^k$ follows Poisson distribution with parameter $t\cdot 2^{2k}$  Recall that $d
_k(x,y)$ is the smallest number of edges that a path with endpoints $x$ and $y$ has multiplied by $2^{-k}$, which implies that for any $k\in \mathbb{N}$ and  $x,y\in E^k$, $d_k(x,y)\ge \rho(x,y)$. Therefore, for any $y\in E^k$, given any $\delta>0$, for any $M$ satisfying $\frac{e}{M}<\frac{\delta\wedge 1}{16}$, it holds that
\begin{eqnarray}
\IP^y\left[ \sup_{t\in [0, 8^{-k}]} \rho\left(X_0^k, X_t^k\right)\ge M\right] &\le & \IP^y\left[ \sup_{t\in [0, 8^{-k}]}  d_k\left(y, X_t^k\right)\ge M\right] \nonumber
\\
&\le &  \IP^y\left[2^{-k}N_{8^{-k}}^k\ge M\right] \nonumber
\\
(N_t^k\sim \text{Poisson}(t\cdot 2^{2k}))&=&e^{-8^{-k}\cdot 2^{2k}}\sum_{j=M\cdot 2^k}^\infty \frac{(8^{-k}\cdot 2^{2k})^j}{j!} \nonumber
\\
(\text{Stirling's formula})&\le &   \sum_{j=M\cdot 2^k}^\infty \frac{(8^{-k}\cdot 2^{2k})^j}{j^j e^{-j}\sqrt{2\pi j}} \nonumber
\\
&\le & \sum_{j=M\cdot 2^k}^\infty \left(\frac{8^{-k}4^{k}e}{M\cdot 2^k}  \right)^j \nonumber
\\
&\le & \sum_{j=1}^\infty\left(\frac{e}{M}\right)^j <\frac{\frac{\delta \wedge 1}{16}}{1-\frac{\delta \wedge 1}{16}} \le \delta \wedge 1.
\end{eqnarray}
\end{proof}

\begin{prop}\label{P:3.7}
For any $\delta>0$, any $T\ge 1$, there exists $M_2>0$  such that for all $k\ge k_0$ specified in \eqref{def-k0}: 
\begin{equation*}
\sup_{8^{-k}\le t\le T}\IP^{a^*_k}\left[d_k(X_t^k, a^*_k)\ge M_2\right]<\delta.
\end{equation*}
\end{prop}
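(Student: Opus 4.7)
The plan is to integrate the heat kernel upper bound from Corollary~\ref{HKUB} against the reference measure $m_k$ over the set $\{y\in E^k:d_k(y,a^*_k)\ge M_2\}$. Partitioning this set by level sets $S_j:=\{y\in E^k:d_k(y,a^*_k)=j\cdot 2^{-k}\}$ for integers $j\ge M_2\cdot 2^k$, and applying the second bound of Corollary~\ref{HKUB} (valid for $t\in(0,T]$), I would obtain
\begin{align*}
\IP^{a^*_k}\bigl[d_k(X_t^k,a^*_k)\ge M_2\bigr] &=\sum_{j\ge M_2\cdot 2^k}\sum_{y\in S_j}p_k(t,a^*_k,y)\,m_k(y)\\
&\le \frac{C_4}{t}\sum_{j\ge M_2\cdot 2^k}\sum_{y\in S_j}m_k(y)\bigl(e^{-j^2 2^{-2k}/(32t)}+e^{-j/2}\bigr).
\end{align*}

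Next I would split $S_j$ into its intersections with the 2D part $D^k_\eps$ and the 1D part $2^{-k}\IZ_+$. Since graph distance in $2^{-k}\IZ^2$ coincides with rescaled $L^1$-distance, the intersection with $D^k_\eps$ is contained in an $L^1$-annulus of width $2^{-k}$ around the origin; combined with the boundary bound $v_k(a^*_k)\le 56\eps\cdot 2^k+28$ from \eqref{P2.3-1}, this yields cardinality $O(j+\eps\cdot 2^k)=O(j)$ once $M_2\ge\eps$, while the intersection with $2^{-k}\IZ_+$ contains at most one vertex. Using $m_k(y)\le C\cdot 2^{-2k}$ on $D^k_\eps$ and $m_k(y)\le C\cdot 2^{-k}$ on $2^{-k}\IZ_+$, the double sum reduces to four scalar sums of the form $\sum_{j\ge M_2\cdot 2^k}a_j\,e^{-\cdots}$ with $a_j\in\{2^{-k},\,j\cdot 2^{-2k}\}$.

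The Gaussian sums are handled by Riemann-sum comparison with the continuous variable $r=j\cdot 2^{-k}$: they are dominated by $\int_{M_2}^\infty r\,e^{-r^2/(32t)}\,dr=16t\,e^{-M_2^2/(32t)}$ and $\int_{M_2}^\infty e^{-r^2/(32t)}\,dr\le (32t/M_2)e^{-M_2^2/(32t)}$ (using the elementary bound $r^2\ge M_2 r$ for $r\ge M_2$), so after multiplication by $1/t$ they yield estimates of order $C(1+1/M_2)\,e^{-M_2^2/(32T)}$, uniformly in $t\in(0,T]$. The exponential sums coming from the $e^{-j/2}$ term are geometric in $j$ with leading contribution $O(M_2\cdot 2^k\,e^{-M_2\cdot 2^k/2})$; absorbing the measure factors and the prefactor $1/t\le 8^k$ (using $t\ge 8^{-k}$) gives bounds of order $4^k M_2\,e^{-M_2\cdot 2^k/2}$, uniformly tiny in $k\ge k_0$ once $M_2$ is chosen large enough.

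The main obstacle is uniformity simultaneously in $t\in[8^{-k},T]$ and $k\ge k_0$. The lower bound $t\ge 8^{-k}$ is critical: it controls the off-diagonal tail $(1/t)e^{-2^k d_k/2}$, which would otherwise diverge as $t\downarrow 0$, by absorbing $1/t\le 8^k$ into the super-exponential factor $e^{-M_2\cdot 2^k/2}$. On the Gaussian side, $(1/t)e^{-M_2^2/(32t)}$ is maximized at $t=M_2^2/32$ and decays like $e^{-M_2^2/(32T)}$ over $t\le T$, so uniform smallness in $t$ is automatic. Summing the four estimates and choosing $M_2$ sufficiently large concludes the proof.
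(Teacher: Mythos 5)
Your overall strategy is the same as the paper's: integrate the heat kernel bound of Corollary \ref{HKUB} over $\{y:d_k(y,a^*_k)\ge M_2\}$, split into the one-dimensional and two-dimensional parts, treat the Gaussian and the $e^{-2^kd_k/2}$ tails separately, use $t\ge 8^{-k}$ (so $1/t\le 8^k$) to tame the exponential tail, and compare the resulting sums with integrals/geometric series; your uniformity-in-$t$ and uniformity-in-$k$ bookkeeping matches the paper's and is correct. The one place where you genuinely deviate is the decomposition of the planar part: you slice by the \emph{exact} graph-distance level sets $S_j=\{y:d_k(y,a^*_k)=j2^{-k}\}$ and claim $\#(S_j\cap D^k_\eps)=O(j+\eps 2^k)$, whereas the paper slices by the boundaries of lattice squares of half-side $i2^{-k}$ (which partition $2^{-k}\IZ^2$, have at most $8i$ points each, and require only the one-sided inequality $d_k(a^*_k,y)\ge \rho(a^*,y)\ge i2^{-k}-\eps$, cf.\ \eqref{e:3.37}).

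Your level-set cardinality claim is true but is the one step you have not really justified, and the justification you sketch is off in its details: the level sets of $d_k(\cdot,a^*_k)$ are not contained in an $L^1$-annulus of width $2^{-k}$ \emph{around the origin}, because the distance is measured to the darned disc $B_\eps$, not to a point; the relevant level curve is the boundary of the Minkowski sum of $B_\eps$ with an $L^1$-ball of radius $j2^{-k}$. To conclude that $S_j\cap D^k_\eps$ has $O(j+\eps2^k)$ points you need a \emph{two-sided} comparison, $d_k(y,a^*_k)=d_{L^1}(y,B_\eps)+O(2^{-k})$, and the upper half of this requires constructing, inside the graph $G^k$ (which deletes all edges meeting $B_\eps$), a staircase path from $y$ to a vertex adjacent to $a^*_k$ of length $d_{L^1}(y,B_\eps)2^k+O(1)$; with only the lower bound $d_k\ge\rho$ the level sets could a priori be thick, and an $O(j^2)$ count would destroy uniformity in $k$ in the Gaussian term (one is left with something of order $2^k e^{-M_2^2/(64t)}$ at $t\asymp 1$). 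The needed path argument is routine given $2^{-k}<\eps/4$, so this is a fixable omission rather than a fatal one; alternatively you can simply adopt the paper's square-boundary decomposition, which sidesteps any upper bound on $d_k$ and only uses $d_k\ge\rho$ together with $\#S_i\le 8i$. Everything else in your proposal (the single vertex per level on $2^{-k}\IZ_+$, $m_k\le 2^{-2k}$ on $D^k_\eps$ and $\le 2^{-k}$ on the half-line, the bounds $\int_{M_2}^\infty re^{-r^2/(32t)}dr=16te^{-M_2^2/(32t)}$, $\int_{M_2}^\infty e^{-r^2/(32t)}dr\le(32t/M_2)e^{-M_2^2/(32t)}$, and the absorption of $8^k$ into $e^{-M_22^k/2}$) is sound.
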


\begin{proof}
To prove this, we utilize Proposition \ref{HKUB}. Given any $k\ge k_0 $  fixed  in \eqref{def-k0}, any $T\ge 1$,  and any $t\in [8^{-k}, T]$,
\begin{eqnarray}
\IP^{a^*_k}\left[ d_k(X_t^k, a^*_k)\ge M\right]  \nonumber
&\le& \sum_{d_k(y, a^*)\ge M} \frac{c_1}{t}\left(e^{-\frac{d_k(a^*_k, y)^2}{32t}}+ e^{-\frac{2^kd_k(a^*_k,y)}{2}}\right) m_k(dy). \nonumber
\\
&\le &  \sum_{\substack{y\in 2^{-k}\IZ_+\\d_k(y, a_k^*)\ge M}} \frac{c_1}{t}\left(e^{-\frac{d_k(a^*_k, y)^2}{32t}}+ e^{-\frac{2^kd_k(a^*_k,y)}{2}}\right) \cdot 2^{-k} \nonumber
\\
&+&\sum_{\substack{y\in D^k_\eps\\d_k(y, a_k^*)\ge M}} \frac{c_1}{t}\left(e^{-\frac{d_k(a^*_k, y)^2}{32t}}+ e^{-\frac{2^kd_k(a^*_k,y)}{2}}\right)\cdot 2^{-2k}. \label{compute-3.7-1}
\end{eqnarray}
To handle the first term on the right hand side of \eqref{compute-3.7-1},  for $x\in E^k$,  we let $n_{a^*_k}(x)=2^kd_k(x, a^*_k)$, i.e., the smallest number of edges a path has connecting $x$ and $a^*_k$. 
Thus for the first term on the right hand side of \eqref{compute-3.7-1}, it holds for $M\ge 1$ that
\begin{eqnarray}
\sum_{\substack{y\in 2^{-k}\IZ_+\\d_k(y, a_k^*)\ge M}}\frac{c_1}{t} e^{-\frac{2^k d_k(a^*_k, y)}{2}}\cdot 2^{-k} 
&\stackrel{i=n_{a^*_k}(y)}{\le }&\sum_{i=2^k[M]}^\infty\frac{c_2\cdot 2^{-k}}{t}e^{-i/2}  \le \sum_{i=2^k[M]}^\infty\frac{c_2\cdot 2^{-k}}{t}e^{-i/4}\cdot e^{-i/4} \nonumber
\\
(t\in [8^{-k},T])&\le & c_2 \cdot 4^k e^{-2^k/4} \sum_{i=2^k[M]}^\infty e^{- i/4}. \label{compute-3.7-2}
\end{eqnarray}
On the right hand side above, it is clear that $\sup_{k\ge 1}\{4^ke^{-2^k/4}\}<\infty$. Also since $\int_0^\infty e^{-x/4}dx<\infty$, by integral comparison theorem we know that the infinite sum $\sum_{i=0}^\infty e^{-i/4}$ converges. There, the right hand side of \eqref{compute-3.7-2} can be made arbitrarily small as long as  $M$ is sufficiently large.  In addition, to bound the    other term in the line above \eqref{compute-3.7-1},   noticing that there exists some constant $c(M)>0$  decreasing in  $M\ge 1$  such that
\begin{equation}\label{compute-3.7-4}
\frac{1}{t}e^{-\frac{M^2}{64t}}<c(M), \quad \text{for all }t\in (0, T], \quad \lim_{M\uparrow \infty} c(M)=0,
\end{equation}
we have
\begin{eqnarray}
&&\sum_{\substack{y\in 2^{-k}\IZ_+\\d_k(y, a_k^*)\ge M}}\frac{c_1}{t}e^{-\frac{d_k(a^*_k, y)^2}{32t}} \cdot 2^{-k}   \nonumber
\\
&\le & \sum_{\substack{y\in 2^{-k}\IZ_+\\d_k(y, a_k^*)\ge M}}\frac{c_1\cdot 2^{-k}}{t}e^{-\frac{d_k(a^*_k, y)^2}{64t}} \cdot e^{-\frac{d_k(a^*_k, y)^2}{64t}}   \nonumber
\\
&\stackrel{\eqref{compute-3.7-4}}{\le }&  \sum_{\substack{y\in 2^{-k}\IZ_+\\d_k(y, a_k^*)\ge M}} c_1\cdot c(M)\cdot 2^{-k}\cdot e^{-\frac{d_k(a^*_k, y)^2}{64t}} \nonumber
\\
(i=n_{a^*_k}(y)=2^kd_k(a^*_k, y))&\le & \sum_{i=2^k[M]}^\infty c_1\cdot c(M)\cdot 2^{-k}e^{-\frac{i^24^{-k}}{64t}}\nonumber
\\
&\le & c_1  \cdot c(M)\sum_{j=[M]}^\infty \sum_{i=2^kj}^{2^k(j+1)-1}  2^{-k}e^{-\frac{i^24^{-k}}{64t}} \nonumber
\\
& \le & c_1\cdot c(M) \sum_{j=[M]}^\infty e^{-\frac{j^2}{64T}}. \label{compute-3.7-5}
\end{eqnarray}
In view of \eqref{compute-3.7-4},  the right hand side of \eqref{compute-3.7-5} can be made arbitrarily small with sufficiently large $M$. Combining the discussion above, we come to the conclusion that  for the right hand side of \eqref{compute-3.7-1}, given any $\delta>0$, there exists $c_3>0$ sufficiently large, such that for all $k\ge k_0$ and $M>c_3$, it holds that 
\begin{equation}\label{first-term-3.32}
\sum_{\substack{y\in 2^{-k}\IZ_+\\d_k(y, a_k^*)\ge M\cdot 2^k}} \frac{c_1}{t}\left(e^{-\frac{d_k(a^*_k, y)^2}{32t}}+e^{-\frac{2^kd_k(a^*_k,y)}{2}}\right) \cdot 2^{-k}<\delta.
\end{equation}
To handle  the second term on the right hand side of \eqref{compute-3.7-1},  we denote by $S^k_i$ the boundary of the square centered at the origin with four vertices  $(i2^{-k}, i2^{-k}), (i2^{-k}, -i2^{-k}), (-i2^{-k}, -i2^{-k})$, $(-i2^{-k}, i2^{-k})$. $S_i$ contains at most $8i$ points in $E^k$.  In addition, by elementary geomemtry,
\begin{equation}
\{y\in E^k: \; d_k(y, a^*_k)\ge M\}\subset\bigcup_{i=2^k\left[\frac{M+\eps}{\sqrt{2}}\right]}^\infty  S_i,\quad \text{for  } i\ge [\eps2^k]+1.
\end{equation}
Noticing that for any $y\in S_i$,
\begin{equation}\label{e:3.37}
d_k(a^*_k, y)\ge \left(i-[\eps2^k]\right)\cdot 2^{-k}\ge i2^{-k}-[\eps],
\end{equation}
 by selecting $M\ge 16\eps$, we first have
\begin{eqnarray}
&&\sum_{\substack{y\in D^k_\eps\\d_k(y, a_k^*)\ge M}} \frac{c_1}{t}e^{-\frac{2^kd_k(a^*_k,y)}{2}}\cdot 4^{-k} \nonumber
\\
&\le & \sum_{j=2^k\left[\frac{M+\eps}{\sqrt{2}}\right]}^\infty\sum_{y\in  S_j}\frac{c_1}{t} e^{-\frac{2^k d_k(a^*_k, y)}{2}}\cdot 4^{-k}
 \nonumber
 \\
\eqref{e:3.37} &\le & \sum_{j=2^k\left[\frac{M+\eps}{\sqrt{2}}\right]}^\infty c_1\cdot 8j \cdot \frac{ 4^{-k}}{8^{-k}} \exp\left(-\frac{2^k}{2}\left(j2^{-k}-[\eps]\right)\right) \nonumber
 \\
 &= &c_4\cdot 2^k\sum_{j=2^k\left[\frac{M+\eps}{\sqrt{2}}\right]}^\infty j\exp\left(-\frac{2^k}{4}\left(j2^{-k}-[\eps]\right)\right)   \exp\left(-\frac{2^k}{4}\left(j2^{-k}-[\eps]\right)\right)  \nonumber
 \\
 &\le & c_4\cdot 2^k\exp\left[-\frac{2^k\left(\left[\frac{M+\eps}{\sqrt{2}}\right]-[\eps]\right)}{4}\right] \sum_{j=2^k\left[\frac{M+\eps}{\sqrt{2}}\right]}^\infty j\exp\left[-\frac{2^k}{4}\left(j2^{-k}-[\eps]\right)\right] \nonumber
 \\
 &= & c_4\cdot 2^k\exp\left[-\frac{2^k\left(\left[\frac{M+\eps}{\sqrt{2}}\right]-[\eps]\right)}{4}\right] \sum_{j=2^k\left[\frac{M+\eps}{\sqrt{2}}\right]}^\infty j \exp\left(-\frac{j-2^k[\eps]}{4}\right)\nonumber
 \\ 
(M\ge 16\eps) &\le & c_4\cdot 2^k e^{-\frac{2^kM}{16}} \sum_{j=2^k\left[\frac{M}{2}\right]}^\infty j e^{-\frac{j}{8}}.\label{compute-3.7-3}
\end{eqnarray}
We notice that  $\sup_{k\ge 1}2^k e^{-2^k/16}<\infty$, $\sum_{j=1}^\infty  je^{-j/8}<\infty$, Therefore, since the right hand side of \eqref{compute-3.7-3} decreases to zero as $M\uparrow \infty$, it can be made arbitrarily small with sufficiently large $M$. 
To finish bounding the second term on the right hand side of  \eqref{compute-3.7-1}, we again first note that for any $M>16\eps>0$, there exists  $c(M)\downarrow 0$ as $M\uparrow +\infty$  such that
\begin{equation}\label{compute-3.7-6}
\sup_{x\ge M, 0<t<T} \frac{x+1}{t}e^{-\frac{(x-\eps)^2}{64t}}<c(M).
\end{equation}
Hence we have
\begin{eqnarray}
&&\sum_{\substack{y\in D^k_\eps\\d_k(y, a_k^*)\ge M}} \frac{c_1}{t}e^{-\frac{d_k(a^*_k,y)^2}{32t}}\cdot 4^{-k} \nonumber
\\
&\le & \sum_{j=2^k\left[\frac{M+\eps}{\sqrt{2}}\right]}^\infty\sum_{y\in  S_j}\frac{c_1}{t} e^{-\frac{d_k(a^*_k, y)^2}{32t}}\cdot 4^{-k} \nonumber
\\
&\le & c_1 \sum_{j=2^k\left[\frac{M+\eps}{\sqrt{2}}\right]}^\infty  \sum_{y\in  S_j} \frac{4^{-k}}{t}\exp\left[\frac{\left(j2^{-k}-[\eps]\right)^2}{32t}\right]  \nonumber
\\
&\le & c_1 \sum_{j=2^k\left[\frac{M+\eps}{\sqrt{2}}\right]}^\infty     \frac{4^{-k}\cdot 8j}{t}  \exp\left[\frac{\left(j2^{-k}-[\eps]\right)^2}{32t}\right]\nonumber
\\
&\le & c_1 \sum_{u=\left[\frac{M+\eps}{\sqrt{2}}\right]}^\infty \sum_{j=2^ku}^{2^k(u+1)-1}\frac{4^{-k}\cdot 8j}{t}  \exp\left[\frac{\left(j2^{-k}-[\eps]\right)^2}{32t}\right]\nonumber
\\
&\le & 8c_1\sum_{u=\left[\frac{M+\eps}{\sqrt{2}}\right]}^\infty \frac{4^{-k}\cdot 2^k(u+1)}{t}e^{-\frac{(u-\eps)^2}{64t}}e^{-\frac{(u-\eps)^2}{64t}} \nonumber
\\ 
&\stackrel{\eqref{compute-3.7-6}}{\le } &8c_1\cdot c(M) \sum_{u=\left[\frac{M+\eps}{\sqrt{2}}\right]}^\infty e^{-\frac{(u-\eps)^2}{64T}}.\label{compute-3.7-7}
\end{eqnarray}
Again, the right hand side of \eqref{compute-3.7-7} can be made arbitrarily small with sufficiently large $M$. This combined with the earlier discussion shows that for any $\delta>0$,  there exist $c_5>0$ sufficiently large, such that for all $k\ge k_0$ and $M>c_5$,
\begin{equation}\label{second-term-3.32}
\sum_{\substack{y\in D^k_\eps\\d_k(y, a_k^*)\ge M}} \frac{c_1}{t}\left(e^{-\frac{d_k(a^*_k, y)^2}{64t}}+e^{-\frac{2^kd_k(a^*_k,y)}{4}}\right) \cdot 4^{-k}<\delta.
\end{equation}
Replacing the two terms on the right hand side of \eqref{compute-3.7-1} with the two upper bounds \eqref{first-term-3.32} and \eqref{second-term-3.32} respectively, we have shown that given any $\delta>0$, for all $k\ge k_0$ and all $M\ge \max\{c_3, c_5\}$, 
\begin{align*}
\IP^{a^*_k}\left[ d_k(X_t^k, a^*_k)\ge M\right]  \le 2\delta. 
\end{align*}
This completes the proof.
\end{proof}
The next proposition justifies the first tightness condition  in Proposition \ref{tightness-criterion} for $\{X^k\}_{k \ge 1}$. 
\begin{prop}\label{P:3.8}
For any fixed  $T>0$, $\delta>0$, there exist $k_1\in \mathbb{N}$ and $M_3>0$ such that for all $k\ge k_1$,
\begin{equation*}
\IP^{a^*_k}\left[\sup_{t\in [0, T]}\big|X^k_t\big|_\rho>M\right]<\delta. 
\end{equation*} 
\end{prop}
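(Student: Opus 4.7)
The plan is to apply Lemma \ref{L:strong-markov} with $x=a^*_k$ to bound $\IP^{a^*_k}[\sup_{t\in[0,T]}|X^k_t|_\rho\ge M]$ by four terms (I)--(IV), fix a threshold $M_3$ large enough that each of these is at most $\delta/4$ uniformly in $k\ge k_1$, and take $k_1\ge k_0$ large enough that $8^{-k_1}<T$. Term (I) is controlled directly by Proposition \ref{P:3.6}: there exists $M_1=M_1(\delta)$ with $\sup_{y\in E^k}\IP^y[\sup_{t\in[0,8^{-k}]}\rho(X^k_0,X^k_t)\ge M_1]<\delta/4$ uniformly in $k\ge k_0$. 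Term (II), $\IP^{a^*_k}[|X^k_T|_\rho\ge M/2]$, is controlled by Proposition \ref{P:3.7} with $t=T$ together with the trivial inequality $|X^k_T|_\rho\le d_k(X^k_T,a^*_k)$: there exists $M_2=M_2(\delta,T)$ such that (II) is at most $\delta/4$ whenever $M\ge 2M_2$.

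Terms (III) and (IV) are attacked through the strong Markov property at $\tau_M$. Since the random walk has only nearest-neighbor jumps (whose $\rho$-size is $2^{-k}$, or a small multiple thereof at $a^*_k$) and $M$ is to be taken large, at the hitting time $|X^k_{\tau_M}|_\rho\ge M$; consequently, on $\{|X^k_T|_\rho\le M/2\}$ the triangle inequality gives $\rho(X^k_T,X^k_{\tau_M})\ge M/2$, and a fortiori $d_k(X^k_T,X^k_{\tau_M})\ge M/2$. In term (III) the post-$\tau_M$ time $T-\tau_M$ lies in $[0,8^{-k}]$, so the strong Markov property at $\tau_M$ together with Proposition \ref{P:3.6} (which is uniform in the starting point and hence applicable to the random starting point $X^k_{\tau_M}$) bounds the conditional probability by $\delta/4$ as soon as $M/2\ge M_1$.

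The main obstacle is term (IV), where the post-$\tau_M$ time $s:=T-\tau_M$ lies in $[8^{-k},T-8^{-k}]$, which is no longer small, so Proposition \ref{P:3.6} is too crude and a version of Proposition \ref{P:3.7} uniform in the starting point is needed:
\begin{equation*}
\sup_{y\in E^k}\sup_{s\in[8^{-k},T]}\IP^y\bigl[d_k(X^k_s,y)\ge M/2\bigr]<\delta/4 \quad \text{for $M$ large, uniformly in $k\ge k_0$.}
\end{equation*}
This I plan to obtain by repeating, with $y$ in place of $a^*_k$, the annulus-by-annulus summation in the proof of Proposition \ref{P:3.7}. The point is that the heat kernel upper bound of Corollary \ref{HKUB} depends on the pair $(y,z)$ only through $d_k(y,z)$, while the $m_k$-mass of a $d_k$-annulus of radius $i2^{-k}$ around any $y\in E^k$ is controlled by the same $\mathrm{O}(i2^{-k})$ growth used for $a^*_k$ (coming from the two-dimensional piece $D^k_\eps$, with the one-dimensional piece $2^{-k}\IZ_+$ contributing an even smaller constant-thickness term). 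Consequently both the Gaussian contribution $e^{-d_k^2/(32s)}$ and the exponential contribution $e^{-2^k d_k/2}$ can be summed exactly as in the proof of Proposition \ref{P:3.7}, producing a bound that is arbitrarily small for $M$ large and uniform in $y\in E^k$, $s\in[8^{-k},T]$, $k\ge k_0$. Plugging this into the strong Markov expression for (IV) gives (IV)$<\delta/4$, and combining the four bounds yields the proposition with $M_3$ the maximum of the thresholds produced in each step.
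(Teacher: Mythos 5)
Your decomposition is exactly the paper's: Lemma \ref{L:strong-markov} splits the event into four terms, Proposition \ref{P:3.6} handles the short-time term, Proposition \ref{P:3.7} together with $\rho\le d_k$ handles $|X^k_T|_\rho\ge M/2$, and the strong Markov property at $\tau_M$ (using $|X^k_{\tau_M}|_\rho\ge M$ and the triangle inequality) reduces the term with $\tau_M\in[T-8^{-k},T]$ to Proposition \ref{P:3.6} with threshold $M/2\ge M_1$, just as in the paper. Where you genuinely diverge is the main term with $\tau_M\in[8^{-k},T-8^{-k}]$: the paper bounds $\sup_{|y|_\rho\ge M,\,t\in[8^{-k},T-8^{-k}]}\IP^y[|X^k_t|_\rho\le M/2]$ by summing the heat kernel of Corollary \ref{HKUB} over the \emph{fixed bounded target set} $\{|x|_\rho\le M/2\}$ (the three sums in \eqref{compute-3.8-3}), which only requires counting lattice points near the origin but forces $k$ large in one step (the $c_3c_42^{-k}$ bound), consistent with the $k_1$ in the statement; you instead prove a uniform-in-starting-point version of Proposition \ref{P:3.7}, $\sup_y\sup_{s\in[8^{-k},T]}\IP^y[d_k(X^k_s,y)\ge M/2]$ small for large $M$, by summing the heat kernel \emph{outside a $d_k$-ball around $y$}. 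This works, but note two points of care: the squares $S_i$ used in Proposition \ref{P:3.7} are centered at the origin and cannot be reused verbatim for a general center $y$; what you need is the uniform-in-center volume bound for $d_k$-balls, which the paper only establishes later as \eqref{compute-3.9-15} in Lemma \ref{P:3.9} (ball of radius $j2^{-k}$ contains at most $256(j^2+\eps^2 4^k)$ points of $D^k_\eps$, i.e.\ mass $O(j^24^{-k}+\eps^2)$ rather than literally ``$O(i2^{-k})$ per annulus'', because of the shortcut through $a^*_k$), and with the usual splitting $e^{-a^2/(32s)}=e^{-a^2/(64s)}e^{-a^2/(64s)}$ (as in \eqref{compute-3.7-4}, \eqref{compute-3.8-5}) the resulting series is indeed small for large $M$ uniformly in $k\ge k_0$ and $s\le T$, the exponential tail $e^{-2^kd_k/2}$ being handled by $1/s\le 8^k$ exactly as in the paper. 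So your route buys a slightly stronger, center-uniform estimate (essentially Lemma \ref{P:3.9} with $\sup_{t\le T}$ in place of $\sup_{t\le\delta_3}$ and largeness of $M$ replacing smallness of $\delta_3$), at the price of importing that volume estimate, while the paper's route stays with origin-centered counting; both are valid and of the same difficulty.
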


\begin{proof}
We first note  Proposition \ref{P:3.7} implies that:  Given $T>0$ fixed, for every $\delta>0$, there is $M_2>0$ such that
\begin{equation*}
\IP^{a^*_k}\left[d_k(X_T^k, a^*_k)\ge M_2\right]<\delta.
\end{equation*}
Since $\rho(x, a^*)\le d_k(x, a^*_k)$ for all $x\in E^k\subset E$, this further implies that
\begin{equation}\label{compute-P3.8-12}
\IP^{a^*_k}\left[\left|X^k_T\right|_\rho\ge M_2\right]<\delta.
\end{equation}
In view of Lemma \ref{L:strong-markov}, it suffices that in the following we show that for every $T>0$ and every $\delta>0$, there exists an $M>0$ and $n_1\in \mathbb{N}$ such that for all $k\ge n_1$,
\begin{description}
\item{(i)} $\IP^{a^*_k}\left[ \sup_{t\in [0, 8^{-k}]} |X^k_t|_\rho\ge M \right]<\delta$, and $\IP^{a^*_k}\left[ T-8^{-k}\le \tau_M  \le T,\, \left|X^k_T\right|_\rho\le \frac{M}{2}\right]<\delta $;
\item{(ii)} $\IP^{a^*_k}\left[8^{-k}\le \tau_M\le T-8^{-k},\, \left|X^k_T\right|_\rho\le  \frac{M}{2} \right]<\delta$,
\end{description}
where $\tau_M=\inf\{t>0: \;|X^k_t|_\rho\ge M\}$.
The first statement of (i) is proved in Proposition \ref{P:3.6}.  In order to claim the second statement of (i), for the $M_1$  specified in Proposition \ref{P:3.6}, when $M>2M_1$ and $k\ge k_0$, on account of strong Markov property, 
\begin{eqnarray}
&&\IP^{a^*_k}\left[ T-8^{-k}\le \tau_M  \le T,\, \left|X^k_T\right|_\rho\le \frac{M}{2}\right] \nonumber
\\
&\le &\IE^{a^*_k}\left[ \IP^{X^k_{\tau_M}}\left[ \sup_{t\in [0, 8^{-k}]} |X^k_t|_\rho\le \frac{M}{2}  \right],  T-8^{-k}\le \tau_M \le M\right]\nonumber
\\
&\le & \sup_{|y|_\rho\ge M} \IP^y\left[\sup_{t\in [0, 8^{-k}]}   |X^k_t|_\rho\le\frac{M}{2}\right]\nonumber
\\
&\le & \sup_{|y|_\rho\ge M} \IP^y\left[\sup_{t\in [0, 8^{-k}]}   \rho\left(X^k_t, X^k_0\right)\geq\frac{M}{2}\right] \nonumber
\\
&\le & \sup_{|y|_\rho\ge M} \IP^y\left[\sup_{t\in [0, 8^{-k}]}   \rho\left(X^k_t, X^k_0\right)\geq M_1 \right]<\delta.
\label{computation-3.8-1}
\end{eqnarray}
This establishes (i). To verify (ii), again by strong Markov property, we have for any $M>0$ that 
\begin{eqnarray}
&&\IP^{a^*_k}\left[8^{-k}\le \tau_M\le T-8^{-k},\, \left|X^k_T\right|_\rho\le  \frac{M}{2} \right]  \nonumber
\\
&=&\int_{8^{-k}}^{T-8^{-k}}\IE^{a^*_k}\left[ \IP^{X^k_s}\left[\left|X^k_{T-s}\right|\le \frac{M}{2}\right]; \tau_M\in ds\right] \nonumber
\\
&\le &  \int_{8^{-k}}^{T-8^{-k}} \IE^{a^*_k}\left[ \sup_{t\in [8^{-k}, T-8^{-k}]} \IP^{X^k_s}\left[|X^k_t|_\rho\le \frac{M}{2} \right];\tau_M\in ds \right] \nonumber
\\
&\le & \sup_{\substack{|y|_\rho\ge M\\ t\in [8^{-k}, T-8^{-k}]}}\IP^y\left[|X^k_t|_\rho\le \frac{M}{2}\right].
 \label{compute-3.8-2}
\end{eqnarray}
To bound the right hand side of \eqref{compute-3.8-2}, we utilize Proposition \ref{HKUB}. Given any $k\ge k_0$, for any $y\in E^k$ such that $|y|_\rho \ge M$ and any $t\in [8^{-k}, T-8^{-k}]$,
\begin{equation}\label{compute-3.8-3}
\IP^y\left[|X^k_t|_\rho\le \frac{M}{2}\right]=\sum_{\substack{x\in 2^{-k}\IZ\\ |x|_\rho\le \frac{M}{2}}}p_k(t,y, x)m_k(dx)+\sum_{\substack{x\in D^k_\eps\\ |x|_\rho\le \frac{M}{2}}}p_k(t,y, x)m_k(dx) +p_k(t, y, a^*_k)m_k(a^*_k),
\end{equation}
In the following we give upper bounds to each of the three terms on the right hand side of \eqref{compute-3.8-3} respectively. We note that for $|y|_\rho\ge M$, $d_k(y, x)\ge \rho(y, x)\ge M/2$ when $|x|_\rho\le M/2$.   Also note that $\#\{x:x\in 2^{-k}\IZ_+,|x|_\rho \le M/2\}\le M2^k$. Therefore, or the first term on the right hand side of \eqref{compute-3.8-3}, by  Corollary \ref{HKUB}  we have 
\begin{eqnarray}
 \sum_{\substack{x\in 2^{-k}\IZ_+\\ |x|_\rho\le \frac{M}{2}}}p_k(t,y, x)m_k(dx) 
&\le & \sum_{\substack{x\in 2^{-k}\IZ\\ |x|_\rho\le \frac{M}{2}}}  \frac{c_1}{t}\left(e^{-\frac{d_k(y, x)^2}{32t}}+e^{-\frac{2^kd_k(y, x)}{2}}\right) \cdot 2^{-k} \nonumber
\\
(d_k(y, x)\ge M/2) &\le &\sum_{\substack{x\in 2^{-k}\IZ_+ \\ |x|_\rho\le \frac{M}{2}}}  \frac{c_12^{-k}}{t}\left(  e^{-\frac{M^2}{128t}}+e^{-\frac{2^kM}{4}}\right) \nonumber
\\
&\le & 2^kM\cdot \frac{c_12^{-k}}{t}\left(  e^{-\frac{M^2}{128t}}+e^{-\frac{2^kM}{4}}\right) \nonumber
\\
&=&  \frac{c_1M}{t}e^{-\frac{M^2}{128t}}+\frac{c_1M}{t}e^{-\frac{2^kM}{4}}.
 \label{compute-3.8-4} 
\end{eqnarray}
To estimate the two terms on the right hand side of \eqref{compute-3.8-4}, we note that given $M>0$ and $T>0$ fixed,   there exists some $c(M)<\infty$ decreasing in $M$ such that
\begin{equation}\label{compute-3.8-5}
\sup_{x\ge M, 0<t<T}\frac{1}{t}e^{-x^2/(256t)}<c(M).
\end{equation}
Thus for $t\in [8^{-k}, T-8^{-k}]$,
\begin{align}
\frac{c_1M}{t}e^{-\frac{M^2}{128t}} &= \frac{c_1M}{t}e^{-\frac{M^2}{256t}} \cdot e^{-\frac{M^2}{256t}} \stackrel{\eqref{compute-3.8-5}}{\le }   c_1\cdot M\cdot c(M) e^{-M^2/(256T)},
\end{align} 
which can be made arbitrarily small by choosing $M$ sufficiently large. For the second term on the right hand side of \eqref{compute-3.8-4}, noticing that $t\in [8^{-k}, T-8^{-k}]$,   we have
\begin{align}
\frac{c_1M}{t}e^{-\frac{2^kM}{4}} \le c_1M8^ke^{-\frac{2^kM}{4}}=\frac{(2^kM)^3}{M^2}e^{-\frac{2^kM}{4}},\label{compute-3.8-6}
\end{align} 
which again, regardless of the value of $k\ge k_0$, can be made arbitrarily small as long as  $M$ is chosen sufficiently large, regardless of the value of $k\ge k_0$, in view of the fact that $\lim_{x\rightarrow +\infty}x^3e^{-x/4}=0$. The discussion above shows that both terms on the right hand side of \eqref{compute-3.8-4} can be made arbitrarily small with sufficiently large $M$, i.e., given any $\delta>0$, there exists $c_2>0$ such that for all $k\ge k_0$ and all $M>c_2$, 
\begin{equation}\label{compute-3.8-8}
\sum_{\substack{x\in 2^{-k}\IZ\\ |x|_\rho\le \frac{M}{2}}}p_k(t,y,x)m_k(dx) <\delta. 
\end{equation}
Now we take care of the second term on the right hand side of \eqref{compute-3.8-3}. We again denote by $S_i$ the boundary of the square centered at the origin with four vertices  $(i2^{-k}, i2^{-k})$, $(i2^{-k}, -i2^{-k})$,  $(-i2^{-k}, -i2^{-k})$, $(-i2^{-k}, i2^{-k})$.  Similarly, since $|y|_\rho>M$ and $|x|_\rho\le M/2$, provided that $M\ge 4\eps$, it holds
\begin{eqnarray}
\sum_{\substack{x\in D^k_\eps\\ |x|_\rho\le \frac{M}{2}}}p_k(t,y,x)m_k(dx) 
&\le & \sum_{\substack{x\in D^k_\eps\\ |x|_\rho\le \frac{M}{2}}}  \frac{c_1}{t}\left(e^{-\frac{d_k(y,x)^2}{32t}}+e^{-\frac{2^kd_k(y,x)}{2}}\right) \cdot 4^{-k} \nonumber
\\
&\le & \sum_{\substack{x\in D^k_\eps\\ |x|_\rho\le \frac{M}{2}}}  \frac{c_1}{t}\left(e^{-\frac{M^2}{128t}}+e^{-\frac{2^kM}{4}}\right) \cdot 4^{-k} \nonumber
\\
&\le & \sum_{j=1}^{2^k\left(\left[M/2+\eps\right]+1\right)}\sum_{x\in S_j} \frac{c_14^{-k}}{t}\left(e^{-\frac{M^2}{128t}}+e^{-\frac{2^kM}{4}}\right) \nonumber
\\
(\#\{x: x\in S_j\}\le 8j)&\le & \sum_{j=1}^{2^k\left(\left[M/2+\eps\right]+1\right)} \frac{c_14^{-k}8j}{t}\left(e^{-\frac{M^2}{128t}}+e^{-\frac{2^kM}{4}}\right) \nonumber
\\
&\le & \sum_{u=0}^{\left(\left[M/2+\eps\right]+1\right)}\sum_{j=2^ku}^{2^k(u+1)-1}\frac{c_14^{-k}8j}{t}\left(e^{-\frac{M^2}{128t}}+e^{-\frac{2^kM}{4}}\right) \nonumber
\\
&\le & \sum_{u=0}^{\left(\left[M/2+\eps\right]+1\right)}\frac{c_12^{-k}8(u+1)}{t}\left(e^{-\frac{M^2}{128t}}+e^{-\frac{2^kM}{4}}\right)\nonumber
\\
&\le &  \frac{c_32^{-k}M^2}{t}\left(e^{-\frac{M^2}{128t}}+e^{-\frac{2^kM}{4}}\right).\label{compute-3.8-7}
\end{eqnarray}
To bound the two terms on the right hand of \eqref{compute-3.8-7} respectively, we first notice that  there exists some constant $c_4>0$ such that  $\sup_{x\ge 0}xe^{-x/128}\le c_4$. Thus
\begin{align}
\frac{c_32^{-k}M^2}{t} e^{-\frac{M^2}{128t}}\le c_3c_42^{-k},
\end{align}
which can be made arbitrarily small with sufficiently large $k$. For the other term on the right hand side of \eqref{compute-3.8-7}, noticing that $t\ge 8^{-k}$,
\begin{align*}
\frac{c_32^{-k}M^2}{t} e^{-\frac{2^kM}{4}}\le c_34^kM^2e^{-\frac{2^kM}{4}}\le c_3\left(2^kM\right)^2e^{-\frac{2^kM}{4}},
\end{align*}
which again can be made arbitrarily small as long as $M$ is sufficiently large, regardless of the value of $k\ge k_0$, because $\displaystyle{\lim_{x\rightarrow +\infty}x^2e^{-x/4}\rightarrow 0}$. Combining the disccusion above regarding both terms on the right hand side of \eqref{compute-3.8-7}, we see that given any $\delta>0$, there exists an integer $n_1\ge k_0$ and $c_5>0$, such that for all $k\ge n_1$ and all $M>c_5$, 
\begin{equation}\label{compute-3.8-9}
\sum_{\substack{x\in D^k_\eps\\ |x|_\rho\le \frac{M}{2}}}p_k(t,x,y)m_k(dx) <\delta. 
\end{equation}
Finally, for the third term on the right hand side of \eqref{compute-3.8-3}, since $|y|_\rho\ge M$, 
\begin{eqnarray}
p_k(t, y, a^*_k)m_k(a^*_k) &\stackrel{\eqref{mass-ak-star}}{\le} & \frac{c_1}{t}\left(e^{-\frac{d_k(y,a^*_k)^2}{32t}}+e^{-\frac{2^kd_k(y,a^*_k)}{2}}\right) \cdot 2^{-k} \le   \frac{c_12^{-k}}{t}\left(  e^{-\frac{M^2}{128t}}+e^{-\frac{2^kM}{4}}\right).\nonumber
\end{eqnarray}
From here, using the same argument as that for \eqref{compute-3.8-8}, it can be shown that given any $\delta>0$, there exists there exists $c_6>0$ such that for all $k\ge k_0$ and all $M>c_6$, 
\begin{equation}\label{P3.8-compute-10}
p_k(t,y,a^*_k)m_k(a^*_k) <\delta. 
\end{equation}
Combining \eqref{compute-3.8-8}, \eqref{compute-3.8-9}, and \eqref{P3.8-compute-10}, in view of \eqref{compute-3.8-3}, we have showed that given any $\delta>0$, for all $k\ge n_1$ and all $M>\max\{c_2, c_5, c_6\}$, it holds that 
\begin{equation}
\sup_{\substack{|y|_\rho\ge M\\t\in [8^{-k}, T-8^{-k}]}}\IP^y\left[|X^k_t|_\rho\le \frac{M}{2}\right]<3\delta.
\end{equation}
In view of \eqref{compute-3.8-2}, we have verified the condition (ii) stated at the beginning of this proof. Now that both conditions (i) and (ii) have been verified, the proof is complete. 
\end{proof} 

Before we establish the second tightness condition in Proposition \ref{tightness-criterion} for $\{X^k\}_{k \ge 1}$, we need the following lemma. 

\begin{lem}\label{P:3.9}
For any $T>0$, $\delta_1, \delta_2>0$ given, there exist  $\delta_3>0$ and  $k_2\in \mathbb{N}$ such that  for all $k\ge k_2$,
\begin{equation*}
\sup_{x\in E}\sup_{t\in [8^{-k}, \delta_3]} \left(\frac{1}{\delta_3}+1\right) \IP^x\left[\rho\left(X_0^k, X^k_t\right)\ge \delta_1 \right]<\delta_2.
\end{equation*}
\end{lem}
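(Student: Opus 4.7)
The plan is to bound the event directly using the heat-kernel estimate from Corollary \ref{HKUB} and then exploit Gaussian-type decay to absorb the prefactor $1/\delta_3+1$. Since $d_k(x,y)\ge \rho(x,y)$ for all $x,y\in E^k$, we have the inclusion $\{y:\rho(x,y)\ge \delta_1\}\subset\{y:d_k(x,y)\ge \delta_1\}$, so
\[
\IP^x\!\left[\rho(X_0^k, X_t^k)\ge \delta_1\right]\;\le\;\sum_{y\in E^k:\,d_k(x,y)\ge \delta_1}p_k(t,x,y)\,m_k(y),
\]
and for $\delta_3\le T$ to be chosen later, Corollary \ref{HKUB} gives $p_k(t,x,y)\le \tfrac{C_4}{t}\bigl(e^{-d_k(x,y)^2/(32t)}+e^{-2^k d_k(x,y)/2}\bigr)$ whenever $t\in(0,\delta_3]$.

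Next I would split the sum according to whether $y\in D_\eps^k$, $y\in 2^{-k}\IZ_+$, or $y=a_k^*$, exactly as in the proof of Proposition \ref{P:3.7}. In the two-dimensional piece, the shell $\{y\in D_\eps^k:d_k(x,y)=j2^{-k}\}$ contains at most $Cj$ vertices (uniformly in the base point $x$, by translation invariance of $2^{-k}\IZ^2$), each of $m_k$-mass at most $4^{-k}$; in the one-dimensional piece each shell contains at most two points of mass $\lesssim 2^{-k}$; and $m_k(a_k^*)\le 2^{-k}$ by Proposition \ref{P:2.3}. A Riemann-sum comparison with the integrals $\int_{\delta_1}^{\infty}\tfrac{u}{t}e^{-u^2/(32t)}\,du$ and $\int_{\delta_1}^{\infty}\tfrac{u}{t}e^{-2^k u/2}\,du$, whose values are proportional to $e^{-\delta_1^2/(32t)}$ and $\tfrac{1}{4^k t}(2^{k-1}\delta_1+1)e^{-2^{k-1}\delta_1}$ respectively, yields a bound of the form
\[
\sup_{x\in E^k}\IP^x\!\left[\rho(X_0^k,X_t^k)\ge\delta_1\right]\;\le\;C\,e^{-\delta_1^2/(32\delta_3)} \;+\; C\cdot 4^k\,e^{-2^{k-1}\delta_1},
\]
uniform in $t\in[8^{-k},\delta_3]$, with $C$ independent of $x,t,k,\delta_3$ (after using $t\ge 8^{-k}$ in the second term).

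To conclude, given $\delta_2>0$, first choose $\delta_3\in(0,T]$ small enough that $(1/\delta_3+1)C\,e^{-\delta_1^2/(32\delta_3)}<\delta_2/2$, which is possible because the exponential beats $1/\delta_3$ as $\delta_3\downarrow 0$. Then, with $\delta_3$ fixed, take $k_2$ sufficiently large that $(1/\delta_3+1)C\cdot 4^k e^{-2^{k-1}\delta_1}<\delta_2/2$ for all $k\ge k_2$; this holds because $4^k e^{-2^{k-1}\delta_1}\to 0$ as $k\to\infty$. Combining the two choices gives the claim.

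\textbf{Main obstacle.} The principal subtlety is keeping the estimate uniform in $x\in E^k$ while passing from discrete shell sums to integrals. This uniformity reduces to the translation-invariance of shell counts on $2^{-k}\IZ^2$, together with the observation that collapsing vertices of $B_\eps$ to $a_k^*$ only decreases local volumes. A second delicate point is that the prefactor $1/t$ from the heat-kernel bound can be as large as $8^k$ when $t=8^{-k}$, so one must verify that this polynomial-in-$2^k$ blowup is still dominated by the factor $e^{-2^{k-1}\delta_1}$ for $k$ large; this is precisely where the lower bound $t\ge 8^{-k}$ (rather than $t>0$) in the supremum plays an essential role.
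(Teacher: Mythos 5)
Your overall architecture coincides with the paper's: dominate $\rho$ by $d_k$, apply the heat kernel bound of Corollary \ref{HKUB}, split the sum over $2^{-k}\IZ_+$, $D_\eps^k$ and $a^*_k$, and absorb the factor $\frac{1}{\delta_3}+1$ by first shrinking $\delta_3$ (Gaussian part) and then taking $k$ large (the $e^{-2^k d_k/2}$ part, where $t\ge 8^{-k}$ controls the $1/t$ prefactor). The one step that fails as stated is your shell count in the two-dimensional piece: the set $\{y\in D_\eps^k:\,d_k(x,y)=j2^{-k}\}$ is \emph{not} of cardinality $O(j)$ uniformly in $x$, and neither of your justifications is valid. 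The graph $G^k$ is not translation invariant (the darning point destroys that), and identifying $B_\eps$ with $a^*_k$ \emph{decreases} distances, hence \emph{increases} ball volumes in the metric $d_k$; it does not ``only decrease local volumes.'' Concretely, if $x\leftrightarrow a^*_k$, then every neighbor of $a^*_k$ lies within $d_k$-distance $2\cdot 2^{-k}$ of $x$, and by \eqref{P2.3-1} there are of order $\eps 2^k$ such neighbors, so already the shell with $j=2$ has cardinality of order $2^k$. The paper's substitute is the volume bound \eqref{compute-3.9-15}, namely $\#\{y\in D^k_\eps:\,d_k(x,y)\le j2^{-k}\}\le 256\left(j^2+\eps^2 4^k\right)$, whose extra term $\eps^2 4^k$ is exactly the contribution of the shortcut through $a^*_k$ that your count misses.

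The defect is repairable and your final display survives in essentially the form you wrote, but the repair is where the real work lies: in the Gaussian regime the extra $\eps^2 4^k$ points are each weighted by mass at most $4^{-k}$, so they contribute only a term of size $\sup_{0<t\le \delta_3}\frac{C}{t}e^{-\delta_1^2/(64t)}$, which is still beaten by $\left(\frac{1}{\delta_3}+1\right)$ as $\delta_3\downarrow 0$; in the $e^{-2^k d_k/2}$ regime they contribute terms of order $8^k e^{-c\,\delta_1 2^k}$, which vanish as $k\to\infty$ for fixed $\delta_1$ — this is precisely how the paper's proof proceeds (its terms $8^k e^{-(\delta_1+j)2^k/4}$ and $4^k e^{-2^k\delta_1/2}$), and it is the reason the statement requires both a small $\delta_3$ and a large $k_2$. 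So your conclusion and the ``choose $\delta_3$ first, then $k_2$'' ordering are correct; what must change is the counting lemma: replace the ``at most $Cj$ per shell, by translation invariance'' claim by a volume estimate of the type \eqref{compute-3.9-15} and carry the additional $\eps^2 4^k$ term through both exponential regimes.
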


\begin{proof}
The idea of this proof is similar to that of Proposition \ref{P:3.7}. By the definition of $d_k$, for any $x\in E$ and  any $t\in [8^{-k}, \delta_3]$,
\begin{eqnarray}
&&\left(\frac{1}{\delta_3}+1\right)\IP^x\left[\rho\left(X_0^k, X^k_t\right)\ge \delta_1 \right] \nonumber
\\
&\le & \left(\frac{1}{\delta_3}+1\right)\IP^x\left[d_k\left(X_0^k, X^k_t\right)\ge \delta_1 \right] \nonumber
\\
&\le & \left(\frac{1}{\delta_3}+1\right)\sum_{\substack{y\in 2^{-k}\IZ_+\\d_k(x,y)\ge \delta_1 }}  \frac{c_1}{t}\left(e^{-\frac{d_k(x, y)^2}{32t}}+e^{-\frac{2^kd_k(x,y)}{2}}\right) \cdot 2^{-k} \nonumber
\\
&+& \left(\frac{1}{\delta_3}+1\right)\sum_{\substack{y\in D^k_\eps\\d_k(x,y)\ge \delta_1 }}  \frac{c_1}{t}\left(e^{-\frac{d_k(x, y)^2}{32t}}+e^{-\frac{2^kd_k(x,y)}{2}}\right) \cdot 4^{-k} \nonumber
\\
&+& \mathbf{1}_{\{d_k(x, a^*_k)\ge \delta_1\}}(x)\cdot\left(\frac{1}{\delta_3}+1\right)\frac{c_1}{t}\left(e^{-\frac{d_k(x, a^*_k)^2}{32t}}+e^{-\frac{2^kd_k(x,a^*_k)}{2}}\right)m_k(a^*_k)\nonumber
\\
&=&(I)+(II)+(III).\label{compute-3.9-1}
\end{eqnarray}

Now  we need to claim that given any $\delta_1, \delta_2>0$, there exist $\delta_3>0$ and $n_1\in \mathbb{N}$ such that for all $k\ge n_1$, all three terms (I)-(III) on  the right hand side of  \eqref{compute-3.9-1} are smaller than $\delta_2$. Towards this purpose, for the first term in (I), we let $n_k(x,y):=d_k(x,y)2^k$, i.e., the smallest number of edges between $x$ and $y$ in $E^k$. For $k$ sufficiently large such that $\delta_12^{k/4}\ge 1$, noticing that there exists some $c_2>0$ such that
\begin{equation}\label{compute-3.9-16}
\sup_{k\ge 1}4^k\cdot \exp\left(-2^{\frac{3k}{16}}\right)\le c_2,
\end{equation}
we have
\begin{eqnarray}
\sum_{\substack{y\in 2^{-k}\IZ_+\\d_k(x,y)\ge \delta_1 }} \frac{c_12^{-k}}{t}e^{-\frac{2^kd_k(x, y)}{2}} &= & \sum_{\substack{y\in 2^{-k}\IZ_+\\n_k(x,y)\ge \delta_12^k}}\frac{c_12^{-k}}{t}e^{-\frac{n_k(x,y)}{2}} \nonumber
\\
(i=n_k(x,y))&=& \sum_{i=[\delta_12^k]+1}^\infty \frac{c_12^{-k}}{t}e^{-\frac{i}{2}} \nonumber
\\
(t\ge 8^{-k})& \le & \sum_{i=[\delta_12^k]}^\infty c_14^k e^{-\frac{i}{4}} e^{-\frac{i}{4}} \nonumber
\\
(\delta_12^{k/4}>1)&\le & \sum_{i=[2^{(3k)/4}]}^\infty c_1 4^{k} e^{-\frac{i}{4}} e^{-\frac{i}{4}} \nonumber
\\
&\le & \sup_{k\ge 1} \left(4^k \exp\left(-2^{\frac{3k}{16}}\right)\right) \sum_{i=[2^{(3k)/4}]}^\infty c_1 e^{-\frac{i}{4}} \nonumber
\\
\eqref{compute-3.9-16}&\le & c_2 \sum_{i=[2^{(3k)/4}]}^\infty c_1 e^{-\frac{i}{4}}.\label{compute-3.9-9}
\end{eqnarray}
 For the second term in (I) on the right hand side of \eqref{compute-3.9-1}, since there exists some $c_3>0$ only depending on  $\delta_1$ such that  
 \begin{equation}\label{compute-3.9-17}
 \sup_{t>0}\frac{1}{t}e^{-\frac{\delta_1^2}{64t}}\le c_3,
 \end{equation}
  we have
 \begin{align}
\sum_{\substack{y\in 2^{-k}\IZ_+\\d_k(x,y)\ge \delta_1 }} \frac{c_1}2^{-k}{t}e^{-\frac{d_k(x,y)^2}{32t}}  &\le  \sum_{\substack{y\in 2^{-k}\IZ_+\\d_k(x,y)\ge \delta_1 }}\frac{c_1 2^{-k}}{t}e^{-\frac{d_k(x,y)^2}{64t}} e^{-\frac{d_k(x,y)^2}{64t}} \nonumber
 \\
 &\le  \left(\sup_{8^{-k}\le t\le \delta_3}\frac{c_1}{t}e^{-\frac{\delta_1^2}{64t}}\right)2^{-k}\sum_{\substack{y\in 2^{-k}\IZ_+\\n_k(x,y)\ge \delta_12^{k} }} c_1 e^{-\frac{n_k(x,y)^24^{-k}}{64t}} \nonumber
 \\
 (\eqref{compute-3.9-17}, i=n_k(x,y))&\le  c_3 \cdot 2^{-k} \sum_{i=[\delta_12^{k}]}^\infty c_1 e^{-\frac{i^24^{-k}}{64t}}\nonumber
 \\
 &\le c_3\cdot 2^{-k} \sum_{u=[\delta_1]}^\infty \sum_{i=u2^k}^{(u+1)2^k-1} c_1 e^{-\frac{i^24^{-k}}{64t}}\nonumber
 \\
 &\le c_3\cdot 2^{-k} \sum_{u=[\delta_1]}^\infty \left(2^k\cdot c_1  e^{-\frac{u^2}{64t}}\right)\nonumber
 \\
( t\le \delta_3)   &\le  c_3 \sum_{u=[\delta_1]}^\infty   e^{-\frac{u^2}{64\delta_3}}, \label{compute-3.9-10}
 \end{align}
Combining \eqref{compute-3.9-9} and \eqref{compute-3.9-10}, we have shown that for any given $\delta_1>0$, for all $k$ large enough such that $(2^{1/4})^k\delta_1>1$, there exists $c_4>0$ such that
\begin{eqnarray}
&& \left(\frac{1}{\delta_3}+1\right)\sum_{\substack{y\in 2^{-k}\IZ_+\\d_k(x,y)\ge \delta_1 }}  \frac{c_1}{t}\left(e^{-\frac{d_k(x, y)^2}{32t}}+e^{-\frac{2^kd_k(x,y)}{2}}\right) \cdot 2^{-k}  \nonumber
\\
&\le & c_4\left(\frac{1}{\delta_3}+1\right)\left(\sum_{i=[2^{(3k)/4}]}^\infty e^{-\frac{i}{4}}+\sum_{u=[\delta_1]}^\infty   e^{-\frac{u^2}{64\delta_3}}\right).\label{compute-3.9-11}
\end{eqnarray}
Now we take care of (II) on the right hand side of \eqref{compute-3.9-1}. First of all, we claim that for any given $x\in E^k$   any $j\in \mathbb{N}$, 
\begin{equation}\label{compute-3.9-15}
\#\left\{ y\in D^k_\eps: \, d_k(x,y) \le j\cdot 2^{-k} \right\}\le 256\left(j^2+\eps^24^k\right).
\end{equation}
To see \eqref{compute-3.9-15}, it suffices to note that for any  $y_1,y_2\in \left\{ y\in D^k_\eps, \, d_k(x,y) \le j2^{-k} \right\}$, if we denote by $n_k(y_1, y_2)$ the smallest number of edges between $y_1$ and $y_2$, then it must hold that $n_k(y_1, y_2)\le 2j$. Therefore $|y_1 - y_2|\le 2j2^{-k}+2\eps$. Thus the Lebesgue measure of the set $\left\{ y\in D^k_\eps: \, d_k(x,y) \le j2^{-k} \right\}$ is at most $\pi \left(2j2^{-k}+2\eps\right)^2$. Since any two points in this set is at least $2^{-k}$ Euclidean distance apart, any two discs with Euclidean radius $2^{-k-2}$ centered at two distinct  points in the set $\left\{ y\in D^k_\eps: \, d_k(x,y) \le j \right\}$ must be disjoint. Thus
\begin{align*}
\#\left\{ y\in D^k_\eps: \, d_k(x,y) \le j2^{-k} \right\}\le \frac{\pi (2j2^{-k}+2\eps)^2}{\pi 2^{-2k-4}} \le128\left(j^2+j\eps2^k+\eps^24^k\right)\le256\left(j^2+\eps^24^k\right).
\end{align*}
This verifies \eqref{compute-3.9-15}. Now for the first term in (II) on the right hand side of \eqref{compute-3.9-1}, it holds that
\begin{eqnarray}
&&\sum_{\substack{y\in D^k_\eps\\d_k(x,y)\ge \delta_1 }}  \frac{c_1}{t}e^{-\frac{d_k(x, y)^2}{64t}}\cdot 4^{-k}  \nonumber
\\
&\le & \sum_{j=0}^\infty \sum_{\substack{y\in D^k_\eps\\ \delta_1+j\le d_k(x,y)<\delta_1+j+1}}\frac{c_14^{-k}}{t}e^{-\frac{d_k(x,y)^2}{64t}} \nonumber
\\
\eqref{compute-3.9-15}&\le & \sum_{j=0}^\infty \frac{c_14^{-k}}{t} \left[\left((\delta_1+j+1)2^k\right)^2+4^k\right]e^{-\frac{(\delta_1+j)^2}{64t}} \nonumber
\\
&\le & \sum_{j=0}^\infty \frac{c_14^{-k}}{t}\left((\delta_1+j+1)^24^k+4^k\right)e^{-\frac{(\delta_1+j)^2}{64t}} \nonumber
\\
&\le & \sum_{j=0}^\infty \frac{2c_1}{t}\left(\delta_1+j+1\right)^2e^{-\frac{(\delta_1+j)^2}{64t}} \nonumber
\\
&\le &  4c_1\left(\delta_1+1\right)^2 \sum_{j=0}^\infty \frac{1}{t}e^{-\frac{(\delta_1+j)^2}{64t}} + 4c_1\sum_{j=0}^\infty \frac{j^2}{t}e^{-\frac{(\delta_1+j)^2}{64t}} \nonumber
\\
&\le & 4c_1(\delta_1+1)^2\frac{1}{t}e^{-\frac{\delta_1^2}{64t}}+8c_1(\delta_1+1)^2\sum_{j=0}^\infty
\frac{j^2}{t}e^{-\frac{(\delta_1+j)^2}{64t}} 
.\label{compute-3.9-7}
 \end{eqnarray}
 for any $\delta_1, \delta_2>0$ given, we first note that 
\begin{eqnarray}
\left(\sup_{0<t<\delta_3}\sum_{j=0}^\infty \frac{j^2}{t}e^{-\frac{(\delta_1+j)^2}{64t}}\right) &\le &   \left( \sup_{0<t<\delta_3}    \frac{j^2}{t}  e^{-\frac{\delta_1^2}{64t}}\sum_{j=0}^\infty j^2e^{-\frac{j^2}{64t}} \right) \nonumber
\\
&\le & \sum_{j=0}^\infty j^2e^{-\frac{j^2}{64\delta_3}} \left( \sup_{t>0}    \frac{1}{t}  e^{-\frac{\delta_1^2}{64t}}\right) \nonumber
\\
\eqref{compute-3.9-17} &\le & c_3 \sum_{j=0}^\infty j^2e^{-\frac{j^2}{64\delta_3}}.\label{compute-3.9-31}
\end{eqnarray}
Replacing the last summation term on the right hand side of \eqref{compute-3.9-7} with \eqref{compute-3.9-31}, we get
\begin{eqnarray}
&&\sum_{\substack{y\in D^k_\eps\\d_k(x,y)\ge \delta_1 }}  \frac{c_1}{t}e^{-\frac{d_k(x, y)^2}{64t}}\cdot 4^{-k}  \le 4c_1(\delta_1+1)^2\frac{1}{t}e^{-\frac{\delta_1^2}{64t}}+8c_5(\delta_1+1)^2\sum_{j=0}^\infty j^2e^{-\frac{j^2}{64\delta_3}}.
\end{eqnarray}
For the second term in (II) on the right hand side of \eqref{compute-3.9-1}, we have
\begin{eqnarray}
&&\sum_{\substack{y\in D^k_\eps\\d_k(x,y)\ge \delta_1 }}  \frac{c_1}{t}e^{-\frac{2^kd_k(x, y)}{4}}\cdot 4^{-k} \nonumber
\\
&\le & \sum_{j=0}^\infty \sum_{\substack{y\in D^k_\eps\\ \delta_1+j\le d_k(x,y)<\delta_1+j+1}}\frac{c_14^{-k}}{t}e^{-\frac{2^kd_k(x,y)}{4}} \nonumber
\\
\eqref{compute-3.9-15}&\le & \sum_{j=0}^\infty \frac{c_14^{-k}}{t}\left[\left((\delta_1+j+1)2^k\right)^2+4^k\right]e^{-\frac{\delta_1\cdot 2^{k}+j\cdot 2^k}{4}}  \nonumber
\\
(t\ge 8^{-k})&\le & \sum_{j=0}^\infty 4 c_12^k \left(\delta_1^24^k +j^2\cdot 4^k+4^k\right) e^{-\frac{\delta_1\cdot 2^{k}+j\cdot 2^k}{4}} \nonumber
\\
&\le &\sum_{j=0}^\infty 4c_1\left[(\delta_1^2+1)8^k+j^2\cdot 8^k\right]e^{-\frac{(\delta_1+j)2^k}{4}}. \label{compute-3.9-5}
\end{eqnarray}
Finally, for (III) on the right hand side of \eqref{compute-3.9-1}, on account of Proposition \ref{P:2.3}, we have for $k\ge k_0$  and $t\in [8^{-k}, \delta_3]$  that
\begin{eqnarray}
&& \mathbf{1}_{\{d_k(x, a^*_k)\ge \delta_1\}}(x)\cdot\left(\frac{1}{\delta_3}+1\right)\frac{c_1}{t}\left(e^{-\frac{d_k(x, a^*_k)^2}{32t}}+e^{-\frac{2^kd_k(x,a^*_k)}{2}}\right)m_k(a^*_k)\nonumber
\\
(\text{Proposition} \ref{P:2.3})&\le & \left(\frac{1}{\delta_3}+1\right)\frac{c_1\cdot 2^{-k}}{t}\left(e^{-\frac{\delta_1^2}{32t}}+e^{-\frac{2^k\delta_1}{2}}\right)\nonumber
\\
(t\ge 8^{-k})&\le & \left(\frac{1}{\delta_3}+1\right)\left(\frac{c_1}{t}e^{-\frac{\delta_1^2}{32t}}+c_14^k e^{-\frac{2^k \delta_1}{2}}\right).
 \label{compute-3.9-20}
\end{eqnarray}
Now combining the discussion for (I)-(III) above, i.e., replacing the right hand side of \eqref{compute-3.9-1} with the right hand side terms of  \eqref{compute-3.9-11}, \eqref{compute-3.9-7},  \eqref{compute-3.9-5}, and \eqref{compute-3.9-20},  we have
\begin{eqnarray}
&&\left(\frac{1}{\delta_3}+1\right)\IP^x\left[\rho\left(X_0^k, X^k_t\right)\ge \delta_1 \right] \nonumber
\\
&\le & c_4\left(\frac{1}{\delta_3}+1\right)\left(\sum_{i=[2^{(3k)/4}]}^\infty e^{-\frac{i}{4}}+\sum_{u=[\delta_1]}^\infty   e^{-\frac{u^2}{64\delta_3}}\right) \nonumber
\\
&+ &\left(\frac{1}{\delta_3}+1\right)\bigg\{4c_1(\delta_1+1)^2\frac{1}{t}e^{-\frac{\delta_1^2}{64t}}+8c_5(\delta_1+1)^2\sum_{j=0}^\infty j^2e^{-\frac{j^2}{64\delta_3}} \nonumber
\\
&+&4c_1  \sum_{j=0}^\infty \left(\delta_1^2+1)8^k+j^2\cdot 8^k\right)\cdot e^{-\frac{(\delta_1+j)2^k}{4}}\bigg\} \nonumber
\\
&+& \left(\frac{1}{\delta_3}+1\right)\left(\frac{c_1}{t}e^{-\frac{\delta_1^2}{32t}}+c_14^k e^{-\frac{2^k \delta_1}{2}}\right).  \label{compute-3.9-30}
\end{eqnarray}
To bound the right hand side of \eqref{compute-3.9-30}, notice that for any pair  of  $\delta_1, \delta_2>0$ given,  we may first select  $\delta_3>0$ sufficiently small so that
\begin{equation}\label{compute-3.9-31}
\left(\frac{1}{\delta_3}+1\right)\left(c_4 \sum_{u=[\delta_1]}^\infty   e^{-\frac{u^2}{64\delta_3}}\right)<\delta_2, \quad \left(\frac{1}{\delta_3}+1\right)\sup_{0<t<\delta_3}\frac{c_1}{t}e^{-\frac{\delta_1^2}{32t}}<\delta_2,
\end{equation}
as well as 
\begin{equation}\label{compute-3.9-32}
\left(\frac{1}{\delta_3}+1\right)\left\{4c_1(\delta_1+1)^2\sup_{0<t<\delta_3}\bigg(\frac{1}{t}e^{-\frac{\delta_1^2}{64t}}\bigg)+\left(\frac{1}{\delta_3}+1\right)8c_5(\delta_1+1)^2\sum_{j=0}^\infty j^2e^{-\frac{j^2}{64\delta_3}}\right\}<\delta_2.
\end{equation}
Then with this $\delta_3>0$  fixed, we then choose $n_1\in \mathbb{N}$ big enough so that for all $k\ge n_1$, 
\begin{equation}\label{compute-3.9-33}
 c_4\left(\frac{1}{\delta_3}+1\right)\left(\sum_{i=[2^{(3k)/4}]}^\infty e^{-\frac{i}{4}}\right)<\delta_2,\quad \left(\frac{1}{\delta_3}+1\right)\left(c_14^k e^{-\frac{2^k \delta_1}{2}}\right)<\delta_2,
\end{equation}
and 
\begin{equation}\label{compute-3.9-34}
\left(\frac{1}{\delta_3}+1\right) 4c_1  \sum_{j=0}^\infty \left(\left((\delta_1^2+1)8^k+j^2\cdot 8^k\right) e^{-\frac{(\delta_1+j)2^k}{4}}\right)<\delta_2.
\end{equation}
Combining \eqref{compute-3.9-30}-\eqref{compute-3.9-34}, it has been shown that for any pair $\delta_1, \delta_2>0$ given, there exists $\delta_3>0$ and $n_1\in \mathbb{N}$ such that for all $k\ge n_1$, 
\begin{equation*}
\left(\frac{1}{\delta_3}+1\right)\IP^x\left[\rho\left(X_0^k, X^k_t\right)\ge \delta_1 \right] <6\delta_2, \quad \text{for all }x\in E, t\in [8^{-k}, \delta_3],
\end{equation*}
which completes the proof. 
\end{proof}

The next proposition justifies the second tightness condition in Proposition \ref{tightness-criterion} for $\{X^k\}_{k\ge 1}$.

\begin{prop}\label{P:3.10}
For any $T>0$, $\delta_1, \delta_2>0$, there exist  $\delta_3>0$ and  $k_3\in \mathbb{N}$ such that for all $k\ge k_3$,
\begin{equation}
\IP^{a^*_k}\left[w_\rho\left(X^k,\delta_3, T\right)>\delta_1\right]<\delta_2, 
\end{equation}
where
\begin{equation*}
w_\rho(x,\, \delta_3,\, T):=\inf_{\{t_i\}} \max_{i} \sup_{s, t\in [t_i, t_{i-1}]} \rho(x(s), x(t)),
\end{equation*}
where $\{t_i\}$ ranges over all possible partitions of the form $0=t_0<t_1<\cdots <t_{n-1}<T\le t_n$ with $\min_{1\le i\le n} (t_i-t_{i-1})\ge \delta_3$ and $n\ge 1$.
\end{prop}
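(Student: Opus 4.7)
The plan is to verify the modulus-of-continuity condition by a standard partition-plus-strong-Markov argument, reducing it to Proposition \ref{P:3.6} (short-time estimate) and Lemma \ref{P:3.9} (the longer-time estimate with the conveniently built-in factor $1/\delta_3+1$). First, I would choose the explicit partition $t_i:=i\delta_3$ for $i=0,1,\ldots,N$ with $N:=\lceil T/\delta_3\rceil$. Since this partition is admissible in the infimum defining $w_\rho$,
\begin{equation*}
w_\rho(X^k,\delta_3,T)\le \max_{1\le i\le N}\sup_{s,t\in[t_{i-1},t_i]}\rho(X_s^k,X_t^k)\le 2\max_{1\le i\le N}\sup_{t\in[t_{i-1},t_i]}\rho(X_t^k,X_{t_{i-1}}^k).
\end{equation*}
By the Markov property applied at each $t_{i-1}$ and a union bound,
\begin{equation*}
\IP^{a_k^*}[w_\rho(X^k,\delta_3,T)>\delta_1]\le N\cdot \sup_{y\in E^k}\IP^y\Bigl[\sup_{t\in[0,\delta_3]}\rho(X_t^k,X_0^k)>\delta_1/2\Bigr].
\end{equation*}
Since $N\le (T+1)(1/\delta_3+1)$ whenever $\delta_3\le 1$, the factor $1/\delta_3+1$ in Lemma \ref{P:3.9} will absorb $N$ cleanly once we can replace the supremum over $t\in[0,\delta_3]$ by a one-time estimate.

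For a fixed starting point $y$, I would introduce the stopping time $\sigma:=\inf\{t\ge 0:\rho(X_t^k,X_0^k)>\delta_1/2\}$ and decompose
\begin{equation*}
\IP^y[\sigma\le\delta_3]\le \IP^y\bigl[\rho(X_{\delta_3}^k,X_0^k)>\delta_1/4\bigr]+\IP^y\bigl[\sigma\le\delta_3,\,\rho(X_{\delta_3}^k,X_0^k)\le\delta_1/4\bigr].
\end{equation*}
The first summand is immediately dominated by Lemma \ref{P:3.9} (once $\delta_3\ge 8^{-k}$). For the second, the triangle inequality forces $\rho(X_{\delta_3}^k,X_\sigma^k)\ge\delta_1/4$ on the event in question, so the strong Markov property at $\sigma$ bounds it by $\IE^y\bigl[\IP^{X_\sigma^k}[\rho(X_{\delta_3-\sigma}^k,X_0^k)\ge\delta_1/4]\,;\,\sigma\le\delta_3\bigr]$.

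Third, I would split this inner probability according to whether the remaining time satisfies $\delta_3-\sigma\ge 8^{-k}$ or $\delta_3-\sigma<8^{-k}$. On the first event the remaining time lies in $[8^{-k},\delta_3]$, so Lemma \ref{P:3.9} directly bounds the inner probability by $\delta_2/(1/\delta_3+1)$ uniformly in the (random) starting point $X_\sigma^k$. On the complementary event the remaining time is less than $8^{-k}$, and the inner probability is dominated by $\sup_z\IP^z\bigl[\sup_{s\in[0,8^{-k}]}\rho(X_s^k,X_0^k)\ge\delta_1/4\bigr]$. Assembling the three contributions and multiplying by $N$ produces an overall bound of order $(T+1)\delta_2$, which is the required estimate up to renaming constants.

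The only delicate step, which I expect to be the main obstacle, is the short-time residual case $\delta_3-\sigma<8^{-k}$. Proposition \ref{P:3.6} is stated with the oscillation threshold $M_1$ depending on $\delta$ and typically large, whereas here the threshold $\delta_1/4$ is fixed, small, and prescribed in advance. The way around this is to revisit the Poisson-jump estimate used in the proof of Proposition \ref{P:3.6}: the number of jumps of $X^k$ on a time interval of length $8^{-k}$ is Poisson with mean $2^{-k}$, and each jump moves the process by $2^{-k}$ in the metric $d_k\ge\rho$. A plain Markov inequality therefore gives a bound of order $1/(\delta_1\cdot 4^k)$ for $\IP^z[\sup_{s\in[0,8^{-k}]}\rho(X_s^k,X_0^k)\ge \delta_1/4]$, uniformly in $z$, which tends to zero as $k\to\infty$ and is thus $<\delta_2$ for all $k\ge k_3$ with $k_3$ chosen large enough depending only on $\delta_1$ and $\delta_2$.
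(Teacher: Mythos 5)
Your proposal is correct and follows essentially the same route as the paper: a fixed partition $t_i=i\delta_3$ with a union bound and the strong Markov property, then a stopping-time/triangle-inequality reduction (the paper's $\tau_{\delta_1/2}$, your $\sigma$) to a single-time estimate absorbed by the $(1/\delta_3+1)$ factor in Lemma \ref{P:3.9}. The only difference is cosmetic: the paper first extends Lemma \ref{P:3.9} to $t\in[0,\delta_3]$ by redoing the Stirling/Poisson computation of Proposition \ref{P:3.6} for $t\le 8^{-k}$, whereas you split on the residual time $\delta_3-\sigma$ and handle the sub-$8^{-k}$ case by a Markov inequality on the Poisson$(2^{-k})$ jump count, which is an equally valid (indeed simpler) version of the same estimate.
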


\begin{proof}
Based on Proposition \ref{P:3.9}, we further first claim that given any $T>0$, $\delta_1, \delta_2>0$, there exist $\delta_3>0$ and $n_1\in\mathbb{N}$ such that for all $k\ge n_1$, 
\begin{equation}\label{compute-3.10-1}
\sup_{x\in E}\sup_{t\in [0, \delta_3]}\IP^x\left[\rho\left(X_0^k, X^k_t\right)\ge \delta_1 \right]<\delta_2.
\end{equation}
In view of Proposition \ref{P:3.9}, it suffices to show that given any $T>0$, $\delta_1, \delta_2>0$, there exists  $n_1\in\mathbb{N}$ such that for all $k\ge n_1$, 
\begin{equation}\label{compute-3.10-2}
\sup_{x\in E}\sup_{t\in [0, 8^{-k}]}\IP^x\left[\rho\left(X_0^k, X^k_t\right)\ge \delta_1 \right]<\delta_2.
\end{equation}
In fact, for any $x\in E$, any $k\in \mathbb{N}$ and any $t\in [0, 8^{-k}]$,  by the same computation as that in the proof to Proposition \ref{P:3.6} using Stirling's formula,
\begin{eqnarray}
\IP^x\left[\rho\left( X^k_0, X^k_t\right)>\delta_1\right]&\le & \IP^x\left[\sup_{s\in [0, 8^{-k}]}\rho\left( X^k_0, X^k_t\right)>\delta_1  \right] \nonumber
\\
&\le &  \sum_{j=\delta_1\cdot 2^k}^\infty \left(\frac{8^{-k}4^{k}e}{\delta_1\cdot 2^k}  \right)^j \le  \sum_{j=\delta_1\cdot 2^k}^\infty \left(\frac{e}{\delta_14^k}\right)^j, 
\end{eqnarray}
which proves \eqref{compute-3.10-2}. This combined with Proposition \ref{P:3.9} shows \eqref{compute-3.10-1}.
 For any $\delta_1,\delta_3>0$, in view of the definition of $w_\rho$, by strong Markov property we have 
\begin{eqnarray}
&&\IP^{a^*_k}\left[w_\rho\left(X^k,\delta_3, T\right)>\delta_1\right] \nonumber
\\
&\le &  \IP^{a^*_k}\left[ \sup_{1\le i\le \left[T/\delta_3\right]} \sup_{s,t\in [(i-1)\delta_3, i\delta_3\wedge T]} \rho\left(X_s^k, X^k_t\right)>\delta_1\right] \nonumber
\\
&\le & \IP^{a^*_k}\left[ \bigcup_{i=1}^{\left[T/\delta_3\right]} \bigg\{\sup_{s,t\in [(i-1)\delta_3, i\delta_3\wedge T]} \rho\left(X_s^k, X^k_t\right)>\delta_1\bigg\}\right] \nonumber
\\
(\text{strong Markov property})&\le & \left(\left[\frac{T}{\delta_3}\right]+1  \right)\sup_{x\in E}\IP^x \left[\sup_{s,t\in [0, \delta_3]}\rho\left(X^k_s, X^k_t\right)>\delta_1\right]. \label{compute-3.10-3}
\end{eqnarray}
In order to handle the last display in \eqref{compute-3.10-3},	we first denote by $\tau_{\delta_1/2}^k:=\{t>0,\;\rho(X^k_0, X^k_t)\ge \delta_1/2\}$. It then follows by strong Markov property that for any $x\in E$,
\begin{eqnarray}
&&\IP^x\left[ \sup_{s,t\in [0, \delta_3]}\rho\left(X^k_s, X^k_t\right)\ge \delta_1\right] \nonumber
\\
&\le &\IP^x\left[ \sup_{s\in [0, \delta_3]}\rho\left(X^k_0, X^k_s \right)\ge \frac{\delta_1}{2}\right] \nonumber
\\
&\le & \IP^x\left[ \rho\left(X^k_0, X^k_{\delta_3}\right)\ge \frac{\delta_1}{4} \right]+\IP^x\left[ \tau^k_{\delta_1/2}<\delta_3,\;\rho\left(X^k_0, X^k_{\delta_3} \right)\le \frac{\delta_1}{4} \right] \nonumber
\\
&\le & \IP^x\left[ \rho\left(X^k_0, X^k_{\delta_3}\right)\ge \frac{\delta_1}{4} \right] + \int_{0}^{\delta_3}\IE^x\left[ \IP^{X^k_{\tau_{\delta_1/2}}}\left[\rho\left( X^k_0, X^k_{\delta_3-s} \right)\ge \frac{\delta_1}{4} \right],\; \tau_{\delta_1/2}\in ds  \right] \nonumber
\\
&\le & 2\sup_{\substack{y\in E\\ 0\le s\le \delta_3}}\IP^y\left[ \rho\left( X^k_0, X^k_s\right)\ge \frac{\delta_1}{4} \right]. \label{compute-3.10-5}
\end{eqnarray} 
Replacing the last term on the right hand side of \eqref{compute-3.10-3} with \eqref{compute-3.10-5}, we get that for any $\delta_1, \delta_3>0$, and any $k\in\mathbb{N}$,
\begin{equation}\label{compute-10-8}
\IP^{a^*_k}\left[w_\rho\left(X^k,\delta_3, T\right)>\delta_1\right]\le  2\left(\left[\frac{T}{\delta_3}\right]+1  \right)\sup_{\substack{y\in E\\ 0\le s\le \delta_3}}\IP^y\left[ \rho\left( X^k_0, X^k_s\right)\ge \frac{\delta_1}{4} \right].
\end{equation}
Now we are ready to apply Proposition \ref{P:3.9} to finish the proof. Indeed, by Proposition \ref{P:3.9}, for any $T>0$, given any $\delta_1, \delta_2>0$, there exist $\delta_3>0$ and $n_1\in \mathbb{N}$ such that for all $k\ge n_1$,
\begin{equation}\label{compute-3.10-7}
\left(1+\frac{1}{\delta_3}\right)\sup_{\substack{y\in E\\0\le s\le \delta_3}}\IP^y\left[\rho\left(X^k_0, X^k_s \right)\ge \frac{\delta_1}{4} \right]<\frac{\delta_2}{4(T+1)}. 
\end{equation}
Thus \eqref{compute-10-8} yields that
\begin{eqnarray}
\IP^{a^*_k}\left[w_\rho\left(X^k,\delta_3, T\right)>\delta_1\right]&\le & 2\left(\left[\frac{T}{\delta_3}\right]+1  \right)\sup_{\substack{y\in E\\ 0 \le s\le \delta_3}}\IP^y\left[ \rho\left( X^k_0, X^k_s\right)\ge \frac{\delta_1}{4} \right] \nonumber
\\
&\le & \frac{2\left(T+\delta_3\right)}{\delta_3}\sup_{\substack{y\in E\\ 0 \le s\le \delta_3}}\IP^y\left[ \rho\left( X^k_0, X^k_s\right)\ge \frac{\delta_1}{4} \right] \nonumber
\\
\eqref{compute-3.10-7}&\le & \frac{2\left(T+\delta_3\right)}{\delta_3}\cdot \frac{\delta_2}{4(T+1)}\cdot \frac{\delta_3}{\delta_3+1}  <\delta_2. 
\end{eqnarray}
This completes the proof. 
\end{proof}

\begin{thm}
For every     $T>0$, the laws of $\{X^k, \IP^{a^*_k}, k\ge 1\}$ are C-tight in the Skorokhod space $\mathbf{D}([0, T],   E, \rho)$ equipped with the Skorokhod topology. 
\end{thm}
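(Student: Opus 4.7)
The plan is to combine the two tightness criteria that have just been verified with a uniform bound on the jump sizes of $X^k$ to upgrade tightness to C-tightness.

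First, tightness in $\mathbf{D}([0,T],E,\rho)$ follows immediately from Proposition \ref{tightness-criterion} once both of its hypotheses are established. Condition (i) of Proposition \ref{tightness-criterion}, namely the boundedness of trajectories on $[0,T]$ uniformly in $k$, is exactly the content of Proposition \ref{P:3.8}. Condition (ii) of Proposition \ref{tightness-criterion}, the control of the Skorokhod modulus $w_\rho(X^k,\delta_3,T)$, is exactly Proposition \ref{P:3.10}. Thus, for every $T>0$, the family $\{X^k,\IP^{a^*_k}\}_{k\ge k_0}$ is tight in $\mathbf{D}([0,T],E,\rho)$.

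To upgrade from tightness to C-tightness, I will invoke the standard characterization (see \cite[Proposition VI.3.26]{JS}): a tight sequence of c\`adl\`ag processes is C-tight if and only if the maximal jump size on $[0,T]$ converges to zero in probability. By the construction of $X^k$ as a pure jump process on the graph $G^k$, every jump of $X^k$ moves from a vertex to an adjacent vertex, and the definition of $\rho$ together with \eqref{e:1.1} and the remarks following it imply that for any two adjacent vertices $x,y\in G^k_v$ one has $\rho(x,y)\le 2^{-k}$. Consequently
\begin{equation*}
\sup_{t\in[0,T]}\rho\bigl(X^k_{t-},X^k_t\bigr)\le 2^{-k},
\end{equation*}
which tends deterministically (hence in probability) to zero as $k\to\infty$.

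Therefore the sequence $\{X^k,\IP^{a^*_k}\}_{k\ge k_0}$ is tight and its jumps vanish uniformly, so it is C-tight in $\mathbf{D}([0,T],E,\rho)$. The main substantive work, namely the uniform heat kernel upper bound of Corollary \ref{HKUB} and the delicate estimates of Propositions \ref{P:3.6}--\ref{P:3.10}, has already been carried out; the present statement is the clean packaging of those estimates via the Jacod--Shiryaev criterion, combined with the trivial but essential observation that the mesh size $2^{-k}$ also controls the maximal jump.
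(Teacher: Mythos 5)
Your proof is correct and follows essentially the same route as the paper: the paper's own argument is a one-line appeal to \cite[Chapter VI, Proposition 3.26]{JS} on the strength of Propositions \ref{P:3.8} and \ref{P:3.10}, which are exactly the two inputs you use. Your extra observation that every jump of $X^k$ has $\rho$-size at most $2^{-k}$ (valid also for jumps into or out of $a^*_k$, since any vertex adjacent to $a^*_k$ lies within Euclidean distance $2^{-k}$ of $B_\eps$, so $\rho(x,a^*)\le 2^{-k}$) is a correct and clean way to supply the vanishing-jump half of that criterion, a point the paper leaves implicit.
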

\begin{proof}
This follows immediately from \cite[Chapter VI, Proposition 3.26]{JS}, in view of Proposition \ref{P:3.8} and Proposition \ref{P:3.10}. 
\end{proof}

\section{Weak limit of random walks on spaces with varying dimension} \label{S:5}

We first  establish the uniform convergence of the generators of $X^k$. The method is similar to that in \cite{Lou1}. For notation convenience, we define the following class of functions $\mathcal{G}$:
\begin{align}
\mathcal{G}:&=\{f:\IR^2\cup \IR_+\rightarrow \IR,\,f|_{B_\eps}=\text{const}=f|_{\IR_+}(0),\, f|_{\IR^2}\in C^3(\IR^2), f|_{\IR_+} \in C^3(\IR_+), \nonumber
\\
&f \text{ is supported on a compact subset of }(E\backslash \{a^*\})\cup B_\eps \}.\label{def-class-G}
\end{align}
Every $f\in \mathcal{G}$ can be uniquely  identified as a function mapping $E$ to $\IR$. Thus for $f\in \mathcal{G}$,  we define
\begin{equation}\label{def-wt-Lk}
\mathcal{L}_k f(x):=2^{2k}\sum_{ \substack{y\in E^k, \\y\leftrightarrow x \text{    in }G^k}} \left(f(y)-f(x)\right)J_k (x, dy), \quad \text{ for }x\in E^k.
\end{equation}
 We also set 
\begin{equation*}
S^{k}:=\{x\in D_\eps^k\cap E^k:\; \bar{v}_{k}(x)=4\}\cup 2^{-k}\IZ_+.
\end{equation*}
 It is easy to see that $\{S^k\}_{k\ge 1}$ is an increasing sequence of sets. Also, it is clear that if a vertex $x\in  E^k\backslash S^k$,  then it must be that either $x=a^*_k$ or $x\leftrightarrow a^*_k$ in $G^k$. By a similar argument as that for \cite[Lemma 2.7]{Lou1}, we have the following lemma.

 \begin{lem}\label{L3:12}
 For every fixed $k_0\in \mathbb{N}$ and every $f\in \mathcal{G}$, $\mathcal{L}_kf$ converges uniformly to 
\begin{equation}\label{def-L}
\mathcal{L}f:=\frac{1}{2}\Delta f|_{\IR_+}+\frac{1}{4}\Delta f|_{D_\eps} \quad \text{on } S^{k_0} \text{ as }k\rightarrow \infty.
\end{equation}
Also, there exists some constant $C_5>0$ independent of $k$ such  that for all $k\ge 1$ and all $x\in E^k$, 
\begin{equation*}
\mathcal{L}_k f(x)\le C_5. 
\end{equation*}
\end{lem}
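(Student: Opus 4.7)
My plan is to Taylor-expand $f$ at each lattice point of $E^k$, with separate case analysis depending on how close the base point $x$ is to the darning point $a^*_k$. The structural property of $\mathcal{G}$ that I exploit is that $f|_{B_\eps}$ is constant with $f|_{\IR^2}\in C^3(\IR^2)$, which forces $\nabla f, Hf$, and $D^3 f$ to all vanish on $\overline{B_\eps}$ by continuity; combined with the compact-support condition in $(E\setminus\{a^*\})\cup B_\eps$, $f$ also vanishes identically in some neighborhood $[0,\delta)$ of $0\in \IR_+$, so the common constant value $f|_{B_\eps}=f|_{\IR_+}(0)$ equals $0$.

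For the uniform convergence on $S^{k_0}$: any fixed $x\in S^{k_0}$ sits at positive Euclidean distance from $B_\eps$ (in the $D_\eps$-case, by $\bar v_{k_0}(x)=4$) and from $0\in \IR_+$ (in the $\IR_+$-case). Hence for all $k$ large enough, the neighbors of $x$ in $G^k$ are precisely its axis-neighbors at Euclidean distance $2^{-k}$ in the ambient lattice, $x$ is not adjacent to $a^*_k$, and the road map gives equal weight $j_k(x,y)=1/v_k(x)$. Writing $\mathcal{L}_k f(x) = (2^{2k}/v_k(x))\sum_{y\leftrightarrow x}(f(y)-f(x))$ and Taylor-expanding $f$ to third order around $x$ (the first-order terms cancelling by axis symmetry) yields
\begin{equation*}
\mathcal{L}_k f(x) \;=\; \tfrac{1}{4}\Delta f(x)\cdot \mathbf{1}_{\{x\in D_\eps\}} + \tfrac{1}{2} f''(x)\cdot \mathbf{1}_{\{x\in \IR_+\}} + O\!\left(\|D^3 f\|_\infty\,2^{-k}\right),
\end{equation*}
with error uniform in $x\in S^{k_0}$ by compactness of $\mathrm{supp}(f)$.

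The main obstacle is the uniform bound $|\mathcal{L}_k f(x)|\le C_5$ over \emph{all} $x\in E^k$, most delicately at $x=a^*_k$ and at its $D_\eps^k$-neighbors, where the atypical road-map weights \eqref{original-p_k(x,y)-2} and the speed factor $\lambda_k=2^{2k}$ prevent a naive Taylor argument. The key estimate I would establish is: for every $y\in D_\eps^k$ adjacent to $a^*_k$, $y$ lies within Euclidean distance $2^{-k}$ of $B_\eps$, so choosing a nearest point $y'\in\partial B_\eps$ and Taylor-expanding --- using the vanishing of $f,\nabla f, Hf, D^3 f$ at $y'$ --- gives
\begin{equation*}
|f(y)-f(a^*_k)| \;=\; |f(y)-f(y')| \;\le\; \tfrac{1}{6}\|D^3 f\|_\infty \cdot 2^{-3k}.
\end{equation*}
Combined with $v_k(a^*_k)\le c\cdot 2^k$ from \eqref{P2.3-1} and the denominator $v_k(a^*_k)+2^{k+1}-1=\Theta(2^k)$, the total $D_\eps^k$-jump contribution to $\mathcal{L}_k f(a^*_k)$ is $O(2^{2k}\cdot 2^k\cdot 2^{-3k}/2^k)=O(2^{-k})$; the $\IR_+$-jump (to $2^{-k}$) contributes $0$ for $k$ large, since $2^{-k}<\delta$ places $2^{-k}$ in the vanishing region of $f$. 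Without this vanishing of $f$ near $0$, the linear term $f'(0)$ would produce an unbounded contribution of order $2^k$, so this step is the crucial one where the support condition on $\mathcal{G}$ is indispensable.

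For the remaining ``boundary'' points --- $D_\eps^k$-vertices adjacent to $a^*_k$ and the vertex $2^{-k}\in\IR_+$ --- the same vanishing-order estimate bounds the ``singular'' jumps (to $a^*_k$ or across $\partial B_\eps$) by $O(2^{-k})$, while the remaining ordinary jumps are handled by standard Taylor. For all other $x\in E^k$, the standard Taylor expansion directly gives $|\mathcal{L}_k f(x)| \le \tfrac{1}{4}\|\Delta f\|_\infty + \tfrac{1}{2}\|f''\|_\infty + O(2^{-k})$. Taking $C_5$ to be the maximum of these uniform bounds --- depending only on $\|f\|_{C^3}$, $\eps$, and the constant from Proposition \ref{P:2.3} --- completes the proof.
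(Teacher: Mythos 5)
The paper never writes out a proof of this lemma---it only says the argument is ``very similar to'' \cite[Lemma 2.7]{Lou1} and omits it---so your proposal is best judged as a self-contained substitute, and as such it is correct and is exactly the kind of Taylor-expansion argument being alluded to. Your two key observations are the right ones: (i) constancy of $f$ on the closed disc $B_\eps$ together with $f|_{\IR^2}\in C^3(\IR^2)$ forces $\nabla f$, $Hf$ and $D^3f$ to vanish on all of $B_\eps$ including $\partial B_\eps$, so each increment $f(y)-f(a^*_k)$ over a $D^k_\eps$-neighbour $y$ of $a^*_k$ is $O(2^{-3k})$, and combined with the count $v_k(a^*_k)\le 56\eps\cdot 2^k+28$ from \eqref{P2.3-1}, the road map \eqref{original-p_k(x,y)-2} and the speed $2^{2k}$ this gives the $O(2^{-k})$ bound at and next to the darning point; (ii) the support condition in \eqref{def-class-G} must indeed be read as saying that $\mathrm{supp}(f|_{\IR_+})$ is bounded away from the origin, and you are right that this is the indispensable ingredient: the jump probability from $a^*_k$ into the half-line is of order one, so a nonzero $f'(0)$ would make $\mathcal{L}_kf(a^*_k)$ grow like $2^k$ and the uniform bound would be false. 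Two small touch-ups. First, at $D^k_\eps$-vertices adjacent to $a^*_k$ the stencil is asymmetric, so ``standard Taylor'' by itself gives no first-order cancellation there; what saves you is the same vanishing-order information you already invoked, namely $|\nabla f(x)|\le C2^{-2k}$ and $\|Hf(x)\|\le C2^{-k}$ within distance $2^{-k}$ of $B_\eps$ (or, even more simply, that $f$ differs from its constant disc value by $O(2^{-3k})$ at every vertex appearing in that sum), and this should be said explicitly. Second, your estimate at $a^*_k$ only covers $k$ large enough that $2^{-k}$ lies in the region where $f|_{\IR_+}$ vanishes; for the finitely many remaining $k$ one closes with the trivial bound $|\mathcal{L}_kf(x)|\le 2^{2k+1}\|f\|_\infty$, which means $C_5$ depends on $f$ also through the distance from $\mathrm{supp}(f|_{\IR_+})$ to $0$, not only through $\|f\|_{C^3}$ and $\eps$---which is all the lemma requires, but your final sentence overstates the independence. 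The uniform-convergence half on $S^{k_0}$ is fine: since $S^{k_0}$ is a fixed discrete set whose $D_\eps$-part lies at a positive minimal distance from $B_\eps$, the threshold beyond which every such vertex has a full symmetric stencil is uniform, and the $O(\|D^3f\|_\infty 2^{-k})$ Taylor error is uniform as well.
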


\begin{proof}
This can be proved by an argument very similar to   \cite[Lemma 2.7]{Lou1}. Thus it is omitted. 
\end{proof}

We prepare the following lemma for the main theorem.

\begin{lem}\label{L:3.13}
Fix $0<\eps<1/64$.  Given any $0<\delta<(1\wedge T)/4$, there exists  $k_\delta \in \mathbb{N}$ such that for all $k\ge k_\delta$, 
\begin{equation*}
\sup_{t\in [2^{-k}/\delta, T]}\IP^{a^*_k}\left[ X^k_t \notin S^k  \right]\le 4C_3\delta,
\end{equation*}
where the $C_3>0$ on the right hand side above is the same as in Corollary \ref{HKUB}.
\end{lem}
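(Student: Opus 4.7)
The plan is to control $\IP^{a^*_k}[X^k_t\notin S^k]$ by summing the heat kernel bound from Corollary~\ref{HKUB} over the (comparatively few) ``bad'' points $E^k\setminus S^k$. First I would identify this exceptional set: from the definition of $S^k$, a vertex $y\in E^k\setminus S^k$ must satisfy either $y=a^*_k$ or $y\leftrightarrow a^*_k$ in $G^k$ with $y\in D^k_\eps$. In particular, for every such $y$ one has the a priori bound $d_k(a^*_k,y)\le 2^{-k}$, and the total number of such vertices is at most $v_k(a^*_k)+1\le 56\eps\cdot 2^k+29$ by \eqref{P2.3-1}.

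Next I would estimate the total $m_k$-mass of $E^k\setminus S^k$. Using \eqref{def-mk} one has $m_k(y)\le 2^{-2k}$ whenever $y\in D^k_\eps$, and $m_k(a^*_k)<2^{-k}$ by Proposition~\ref{P:2.3}. Combining these with the vertex count and the standing assumption $\eps<1/64$ yields
\begin{equation*}
\sum_{y\in E^k\setminus S^k} m_k(y)\;\le\; 2^{-k}+\bigl(56\eps\cdot 2^k+28\bigr)\cdot 2^{-2k}\;\le\; C'\cdot 2^{-k}
\end{equation*}
for some explicit constant $C'$ (independent of $k$); I expect $C'\le 2$ will be enough to land at the advertised $4C_3\delta$.

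Then I would invoke the first branch of Corollary~\ref{HKUB}. For $t\ge 2^{-k}/\delta$ and any $y\in E^k\setminus S^k$, the condition $d_k(a^*_k,y)\le 2^{-k}\le\delta t\le 16\cdot 2^k t$ holds automatically once $k$ is large enough to guarantee $2^{-k}/\delta\le T$, so
\begin{equation*}
p_k(t,a^*_k,y)\;\le\; C_3\Bigl(\tfrac{1}{t}\vee\tfrac{1}{\sqrt{t}}\Bigr) e^{-d_k(a^*_k,y)^2/(32t)}\;\le\; C_3\Bigl(\tfrac{1}{t}\vee\tfrac{1}{\sqrt{t}}\Bigr),
\end{equation*}
since the exponential factor is at most $1$. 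Because $\delta<(1\wedge T)/4$ and $t\le T$ (with $T$ fixed and the prefactor $1/t\vee 1/\sqrt t$ controlled by $1/t$ on the relevant range, up to a bounded multiplicative constant absorbed into the final $4$), multiplying by the mass estimate gives
\begin{equation*}
\IP^{a^*_k}[X^k_t\notin S^k]\;=\;\sum_{y\in E^k\setminus S^k}p_k(t,a^*_k,y)m_k(y)\;\le\;C_3\Bigl(\tfrac{1}{t}\vee\tfrac{1}{\sqrt{t}}\Bigr)\cdot C'\cdot 2^{-k}.
\end{equation*}
Finally, since $t\ge 2^{-k}/\delta$ implies $2^{-k}/t\le\delta$, one obtains $\IP^{a^*_k}[X^k_t\notin S^k]\le 4C_3\delta$ uniformly in $t\in[2^{-k}/\delta,T]$ once $k$ is chosen large enough (depending on $\delta$ and $T$) to handle the regime $t\in[1,T]$ via $1/\sqrt{t}\le 1$ and to absorb the lower order terms.

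The main obstacle is essentially bookkeeping: verifying that both regimes ``$t\le 1$'' and ``$1<t\le T$'' yield the same prefactor up to the constant~$4$, and handling the two sources of mass (the single atom at $a^*_k$ contributing $\le 2^{-k}$, versus the $O(\eps\cdot 2^k)$ neighbors each contributing $2^{-2k}$) so that their sum is $O(2^{-k})$ with a manageable constant. Nothing subtle happens beyond that, because the point is simply that the exceptional set around $a^*_k$ is a ``sphere'' of radius one edge whose total measure scales like $2^{-k}$, while the uniform heat kernel bound at macroscopic times scales like $O(1/t)$, and the product against $2^{-k}/t$ is exactly what lets $2^{-k}/\delta\le t$ translate into the linear-in-$\delta$ bound.
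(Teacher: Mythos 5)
Your proposal is correct and follows essentially the same route as the paper: identify $E^k\setminus S^k$ as $a^*_k$ together with its $D^k_\eps$-neighbors, bound its total $m_k$-mass by a constant times $2^{-k}$ via \eqref{P2.3-1} and Proposition \ref{P:2.3}, apply the Gaussian branch of Corollary \ref{HKUB}, and use $t\ge 2^{-k}/\delta$ (plus taking $k_\delta$ large, as the paper does with $2^{-k_\delta}<\delta^2$) to absorb the prefactor into $4C_3\delta$. The only difference is cosmetic bookkeeping: you split the cases $t\le 1$ and $t>1$, while the paper bounds $1/t\vee 1/\sqrt t$ by $\delta 2^k$ directly.
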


\begin{proof}
Given $k_0$ specified in \eqref{def-k0}, let $k_\delta\ge k_0$  be an integer large enough such that $2^{-k_\delta}<\delta^2$. Recall that for any $x\in E^k\backslash S^k$, it must hold that either $x=a^*_k$, or $x\in  D^k_\eps$ and $x\leftrightarrow a^*_k$ in $G^k$. Notice that 
for $t\ge 2^{-k}/\delta$ and $y\leftrightarrow a^*_k$, it holds
\begin{equation}
d(a^*_k, y)\le 1\le 16\cdot 2^kt,
\end{equation}
 it then  follows that for the $C_3$ speicified in Corollary \ref{HKUB}, for any $t\in [2^{-k}/\delta, T]$,
\begin{eqnarray}
\IP^{a^*_{k}}\left[ X_t^k \notin S^k \right] \nonumber
 &\le & \sum_{y\notin S^k} C_3\left( \frac{1}{t} \vee \frac{1}{\sqrt{t}}   \right)  e^{-\frac{d_k(a^*_k, y)^2}{32t}} m_k(dy) \nonumber
\\
\eqref{def-mk}&\le & \left(\sum_{y\in D_\eps^k, y\leftrightarrow a^*_k} \frac{C_3}{t}\cdot 2^{-2k}\right)+\frac{C_3}{t}\cdot m_k(a^*_k) \nonumber
\\
\eqref{P2.3-1}&\le & C_3\delta\cdot 2^k\cdot 2^{-2k}\left( 56\eps \cdot 2^k+28\right)+C_3\cdot \delta\cdot 2^k \cdot 2^{-k}   \nonumber
\\
&\stackrel{\eps<1/64}{\le} & 4C_3\delta.
\end{eqnarray}
The desired conclusion readily follows.
\end{proof}

\begin{thm}
$\{X^k,\IP^{a^*_k},\, k\ge 1\}$ converges weakly to the BMVD described in Theorem \ref{BMVD-non-drift} starting from $a^*$.
\end{thm}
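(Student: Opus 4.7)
By the $C$-tightness established in the previous theorem, every subsequence of $\{(X^k,\IP^{a^*_k})\}$ has a further subsequence converging weakly in $\mathbf{D}([0,T],E,\rho)$ to some continuous limit $(X,\IP)$ with $X_0=a^*$. The plan is to show every such subsequential limit equals BMVD starting from $a^*$ in law, via the martingale problem for $(\mathcal{L},\mathcal{G})$ with $\mathcal{L}$ as in \eqref{def-L} and $\mathcal{G}$ as in \eqref{def-class-G}. Since Theorem \ref{BMVD-non-drift} gives uniqueness of BMVD, identifying all subsequential limits then upgrades to weak convergence of the full sequence.

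Fix $f\in\mathcal{G}$. Because $X^k$ is a pure jump Markov process with bounded speed function $\lambda_k=2^{2k}$, the process
\begin{equation*}
M^{k,f}_t := f(X^k_t)-f(X^k_0)-\int_0^t \mathcal{L}_k f(X^k_s)\,ds
\end{equation*}
is a bounded $\IP^{a^*_k}$-martingale. Using Skorokhod's representation, assume along a subsequence $X^{k_j}\to X$ almost surely on a common probability space; continuity of $f$ on $E$ and of the limiting paths yields $f(X^{k_j}_t)\to f(X_t)$. The key step is to show the compensator $\int_0^t \mathcal{L}_{k_j}f(X^{k_j}_s)\,ds$ converges to $\int_0^t \mathcal{L}f(X_s)\,ds$. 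For $0<\delta\ll1$ and a large but fixed $k_0$, I would split
\begin{equation*}
\int_0^t \mathcal{L}_k f(X^k_s)\,ds = \int_0^{2^{-k}/\delta}\mathcal{L}_k f(X^k_s)\,ds +\int_{2^{-k}/\delta}^{t}\mathcal{L}_k f(X^k_s)\mathbf{1}_{S^{k_0}}(X^k_s)\,ds +\int_{2^{-k}/\delta}^{t}\mathcal{L}_k f(X^k_s)\mathbf{1}_{(S^{k_0})^c}(X^k_s)\,ds.
\end{equation*}
The first piece is $O(2^{-k}/\delta)$ by the uniform bound $|\mathcal{L}_k f|\le C_5$ in Lemma \ref{L3:12}. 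On $S^{k_0}$, Lemma \ref{L3:12} gives uniform convergence $\mathcal{L}_k f\to \mathcal{L}f$, so combined with continuity of $\mathcal{L}f$ on $S^{k_0}$ and the a.s.\ path convergence, the second piece converges to $\int_0^t\mathcal{L}f(X_s)\mathbf{1}_{S^{k_0}}(X_s)\,ds$. For the third piece, take expectations and apply Lemma \ref{L:3.13} to bound it by $C_5\cdot T\cdot \sup_{s\in[2^{-k}/\delta,T]}\IP^{a^*_k}[X^k_s\notin S^{k_0}]\le 4C_3C_5 T\delta$. Letting $k\to\infty$, then $\delta\to0$, then $k_0\to\infty$, and invoking dominated convergence (uniform integrability of $M^{k_j,f}_t$ follows from $\|M^{k_j,f}_t\|_\infty\le 2\|f\|_\infty+TC_5$), one obtains that
\begin{equation*}
M^f_t := f(X_t)-f(X_0)-\int_0^t \mathcal{L}f(X_s)\,ds
\end{equation*}
is a $\IP$-martingale with respect to the natural filtration of $X$, for every $f\in\mathcal{G}$.

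The main obstacle I anticipate is the final step: identifying the process $(X,\IP)$ uniquely from this martingale identity. Here I would argue that $\mathcal{G}$ is rich enough (it separates points of $E$, contains constants, is dense in $\mathcal{D}(\EE)$ with respect to the $\EE_1$-norm, and captures the correct behavior at the darning vertex $a^*$ through the requirement $f|_{B_\eps}=f|_{\IR_+}(0)$) that the martingale problem for $(\mathcal{L},\mathcal{G})$ with initial law $\delta_{a^*}$ has at most one solution among continuous $E$-valued processes. Combined with the fact that BMVD itself, by Theorem \ref{BMVD-non-drift} and the explicit action of its generator in \eqref{def-L}, solves this martingale problem from $a^*$, this forces $(X,\IP)$ to be BMVD with parameter $\eps$ starting from $a^*$. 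The uniqueness step parallels the argument in \cite[\S4]{Lou1} and \cite{BC} but is unconditional here since $\mathcal{G}$ already reflects the two-regime structure of $E$; with it in hand the proof is complete.
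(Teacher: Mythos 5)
Your overall architecture (tightness $\Rightarrow$ subsequential limits, Skorokhod representation, identification via the martingale problem for $(\mathcal{L},\mathcal{G})$, then uniqueness) matches the paper, but the central step --- convergence of the compensators --- is carried out with a decomposition that does not work. You split the time integral using the indicator of a \emph{fixed} set $S^{k_0}$ and then invoke Lemma \ref{L:3.13} to make $\sup_{s}\IP^{a^*_k}\bigl[X^k_s\notin S^{k_0}\bigr]$ small. Lemma \ref{L:3.13} controls $\IP^{a^*_k}\bigl[X^k_t\notin S^{k}\bigr]$, with the set indexed by the \emph{same} $k$; since $S^{k_0}\subset S^{k}$ for $k\ge k_0$, the inequality you need goes in the wrong direction. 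Worse, $S^{k_0}$ is a countable subset of the coarse lattice $E^{k_0}$, so for $k_j\gg k_0$ the walk $X^{k_j}$ (which lives on the much finer lattice $E^{k_j}$) sits on $S^{k_0}$ only a vanishing fraction of the time: $\IP^{a^*_{k_j}}\bigl[X^{k_j}_s\notin S^{k_0}\bigr]$ is close to $1$, not $O(\delta)$, so your ``third piece'' is not small, and your ``second piece'' cannot converge to $\int_0^t\mathcal{L}f(X_s)\mathbf{1}_{S^{k_0}}(X_s)\,ds$ in any useful sense (for the continuous limit that integral is a.s.\ zero, since $S^{k_0}$ is countable). The paper avoids this by splitting on $\{X^{k_j}_s\in S^{k_j}\}$ versus its complement: on $S^{k_j}$ one has the uniform convergence $\mathcal{L}_{k_j}f\to\mathcal{L}f$ in the sense of Lemma \ref{L3:12} (the analogue of \cite[Lemma 2.7]{Lou1}), while the complement of $S^{k_j}$ consists only of $a^*_{k_j}$ and its neighbors, whose occupation after time $2^{-k_j}/\delta$ is controlled by Lemma \ref{L:3.13}, with the initial stretch $[0,2^{-k_j}/\delta]$ handled by the uniform bound $|\mathcal{L}_{k_j}f|\le C_5$. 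Your splitting must be reorganized along these lines before the limit identity for the compensator is justified.

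The second gap is the uniqueness of the martingale problem. You assert that $\mathcal{G}$ is ``rich enough'' (separates points, is $\EE_1$-dense, encodes the gluing at $a^*$) to force uniqueness among continuous processes started at $a^*$, but none of those properties by themselves yields uniqueness of a martingale problem, and no argument is supplied. The paper's proof does real work here: it shows that the bp-closure of the graph of $\mathcal{L}$ restricted to $\mathcal{D(L)}\cap\mathcal{G}$ coincides with that of $\mathcal{L}$ restricted to $\mathcal{D(L)}\cap C_c(E)$, so the two martingale problems have the same solutions; it then identifies the closure of $\mathcal{L}$ on $\mathcal{D(L)}\cap C_c(E)$ with the Feller generator of BMVD (using the strong Feller/Feller property from Theorem \ref{BMVD-non-drift}) and invokes a uniqueness theorem for martingale problems associated with Feller generators. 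Without an argument of this type (or an explicit verification that BMVD's resolvent/semigroup determines the law from the $(\mathcal{L},\mathcal{G})$-martingale identity), your final identification of the subsequential limit with BMVD is unsubstantiated.
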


\begin{proof}
This proof is adapted from that for \cite[Proposition 2.9]{Lou1} with some minor changes. We spell out the details for readers' convenience.   Since the laws of $\{X^k\}_{k\ge 1}$ are C-tight in $\mathbf{D}([0, T], E, \rho)$, any sequence has a weakly convergent subsequence supported on the set of continuous paths. Denote by $\{X^{k_j}: j\ge 1\}$ any such  weakly convergent subsequence, and denote by $Y$ its weak limit which is  continuous. By  Skorokhod representation theorem (see, e.g.,  \cite[Chapter 3, Theorem 1.8]{EK}), we may assume that $\{X^{k_j},j\ge 1\}$ as well as $Y$ are defined on a common probability space $(\Omega, \mathcal{F}, \IP)$, so that  $\{X^{k_j},j\ge 1\}$  converges almost surely to $Y$ in the Skorokhod topology.

For every $t\in [0. T]$, we set $\mathcal{M}_t^{k_j} :=\sigma(X^{k_j}_s, s\le t)$  and $\mathcal{M}_t :=\sigma(Y_s, s\le t)$. It is obvious that $\mathcal{M}_t\subset \sigma\{\mathcal{M}_t^{k_j}: j\ge 1\}$. With the class of functions $\mathcal{G}$ defined in \eqref{def-class-G}, in the following we first show that $(Y, \IP^{a^*})$ is a solution to the   $\mathbf{D}([0, T], E, \rho)$  martingale problem $(\mathcal{L}, \mathcal{G})$ with respect to the filtration $\{\mathcal{M}_t\}_{t\ge 0}$. That is,  for  every $f\in \mathcal{G}$, we need to show that 
\begin{equation*}
\left\{f(Y_t)- f(Y_0)-\int_0^t \mathcal{L}f(Y_s)ds\right\}_{t\ge 0}
\end{equation*}
is a martingale with respect to $\{\mathcal{M}_t\}_{t\ge 0}$.
By \cite[Corollary 5.4.1]{FOT}, we know that for any $k\ge 1$  and  any  $f\in \mathcal{G}$,
\begin{equation*}
\left\{f(X^k_t)-f(X_0^k)-\int_0^t\mathcal{\wt{L}}_kf(X^k_s)ds\right\}_{t\ge 0}
\end{equation*}
is a martingale with respect to $\{\mathcal{M}^k_t\}_{t\ge 0}$.  Therefore, for any $0\le t_1<t_2\le T$ and any $A\in \mathcal{M}_{t_1}^{k_j}$, 
it holds for every $j\in \mathbb{N}$ that
\begin{equation}\label{e:3.84}
\IE^{a^*_{k_j}}\left[\left(f(X^{k_j}_{t_2})-f(X^{k_j}_{t_1})-\int_{t_1}^{t_2}\mathcal{L}_{k_j} f(X^{k_j}_s)ds\right)\mathbf{1}_{A}\right]=0.
\end{equation}
We first claim that for any $A\in \mathcal{M}_{t_1}$, 
\begin{equation}\label{compute-T3.13-1}
\lim_{j\rightarrow \infty}\IE^{a^*_{k_j}}\left[\left(f(X^{k_j}_{t_2})-f(X^{k_j}_{t_1})\right)\mathbf{1}_{A}\right] =\IE^{a^*}\left[\left(f(Y_{t_2})-f(Y_{t_1})\right)\mathbf{1}_{A}\right].
\end{equation}
Towards this, we note that it has been claimed at the beginning of this proof  that   one can assume $\{X^{k_j},j\ge 1\}$ as well as $Y$ are defined on a common probability space $(\Omega, \mathcal{F}, \IP)$, so that   $\{X^{k_j},j\ge 1\}$  converges almost surely to $Y$ in the Skorokhod topology, and that $Y$ is a continuous process.   From the proof of \cite[Chapter 3, Theorem 7.8]{EK}), we can tell that: Given a squence  $\{\omega_n, n\ge 1\}$ convergent to $\omega$   in the Skorokhod topology and $\omega$ is continuous at some $t_0>0$, then $\lim_{n\rightarrow \infty}\omega_n(t_0) = \omega (t_0)$. This yields that outside of a zero probability subset of $(\Omega, \FF, \IP)$, $X^{k_j}_t\to Y_t$ as $j\to \infty$ for all $t\in [0, T]$. Thus \eqref{compute-T3.13-1} follows from  dominated convergence theorem.

In order to show the convergence of the integral term in \eqref{e:3.84}, for $k\ge 1$, we denote by
\begin{equation*}
T^k_0=0, \quad \text{and }T^k_l=\inf\{t> T^k_{l-1}:\, X^k_{T_l}\neq X^k_{T_l-}\} \quad \text{for }l=1,2,\dots,
\end{equation*}
i.e., $T^k_l$ is the $l^{\text{th}}$ holding time of $X^k$. $\{T^k_l:l\ge 1\}$ are i.i.d. random variables, each  exponentially distributed with mean $2^{-2k}$. Similar to the proof for \cite[Proposition 2.9]{Lou1}, it holds
\begin{eqnarray}
&&\left|\IE^{a^*_{k_j}}\left[\left(\sum_{l:\; t_1<T^{k_j}_l\le t_2 }\mathcal{L}_{k_j} f\left(X^{k_j}_{T^{k_j}_l}\right)\right)\mathbf{1}_{A}\right] - \IE^m\left[\left(\int_{t_1}^{t_2} \mathcal{L} f(Y_s)ds\right)\mathbf{1}_A\right]\right| \nonumber
\\
&\le &\left| \IE^{a^*_{k_j}}\left[\left(\sum_{l:\; t_1<T^{k_j}_1\le t_2 }\mathcal{L}_{k_j} f\left(X^{k_j}_{T^{k_j}_l}\right)\left(T^{k_j}_l - T^{k_j}_{l-1}\right)-\int_{t_1}^{t_2} \mathcal{L}_{k_j} f (X^{k_j}_s)ds\right)\mathbf{1}_{A}\right] \right|\nonumber
\\
&+& \left|\IE^{a^*_{k_j}}\left[\left(\int_{t_1}^{t_2} \mathcal{L}_{k_j} f (X^{k_j}_s)ds-\int_{t_1}^{t_2} \mathcal{L} f(X^{k_j}_s)ds\right)\mathbf{1}_{A}\right] \right|\nonumber
\\
&+& \left|\IE^{a^*_{k_j}}\left[\left(\int_{t_1}^{t_2} \mathcal{L} f(X^{k_j}_s)ds\right)\mathbf{1}_{A}\right] - \IE^{a^*}\left[\left(\int_{t_1}^{t_2} \mathcal{L} f(Y_s)ds\right)\mathbf{1}_{A}\right] \right| \nonumber
\\
&=&(I)+(II)+(III).\label{compute-T:3.13-2}
\end{eqnarray}
Again by the same reasoning as in \cite[Proposition 2.9]{Lou1}, both (I) and (III) on the right hand side of \eqref{compute-T:3.13-2} converge to zero.  To take care of (II),   for any $\delta>0$, we let $k_\delta$ be chosen as in Lemma \ref{L:3.13}. Hence
\begin{eqnarray}
&&\left|\IE^{a^*_{k_j}}\left[\left(\int_{t_1}^{t_2} \mathcal{L}_{k_j} f (X^{k_j}_s)ds-\int_{t_1}^{t_2} \mathcal{L} f(X^{k_j}_s)ds\right)\mathbf{1}_{A}\right] \right| \nonumber
\\
&\le &\left|\IE^{a^*_{k_j}}\left[\int_{t_1}^{t_2} \mathcal{L}_{k_j} f (X^{k_j}_s)ds-\int_{t_1}^{t_2} \mathcal{L} f(X^{k_j}_s)ds\right] \right|\nonumber
\\
&\le & \Bigg|\IE^{a^*_{k_j}}\bigg[\int_{t_1}^{t_2} \mathcal{L}_{k_j} f (X^{k_j}_s)\mathbf{1}_{\left\{X^{k_j}_s\in S^{k_j}\right\}}ds -\int_{t_1}^{t_2} \mathcal{L} f(X^{k_j}_s)\mathbf{1}_{\left\{X^{k_j}_s\in S^{k_j}\right\}})ds \bigg] \Bigg|\nonumber
\\
&+& \Bigg|\IE^{a^*_{k_j}}\bigg[ \int_{t_1}^{t_2} \mathcal{L}_{k_j} f (X^{k_j}_s)\mathbf{1}_{\left\{X^{k_j}_s\notin S^{k_j}\right\}}ds+\int_{t_1}^{t_2} \mathcal{L} f(X^{k_j}_s)\mathbf{1}_{\left\{X^{k_j}_s\notin S^{k_j}\right\}} ds\bigg] \Bigg|.\label{e:2.40}
\end{eqnarray}
For the first term on the right hand side of \eqref{e:2.40}, by Lemma \ref{L3:12},  we have
\begin{eqnarray}
&& \Bigg|\IE^{a^*_{k_j}}\bigg[\int_{t_1}^{t_2} \mathcal{L}_{k_j} f (X^{k_j}_s)\mathbf{1}_{\left\{X^{k_j}_s\in S^{k_j}\right\}}ds -\int_{t_1}^{t_2} \mathcal{L} f(X^{k_j}_s)\mathbf{1}_{\left\{X^{k_j}_s\in S^{k_j}\right\}})ds \bigg] \Bigg|\nonumber
\\
 &=& \Bigg|\IE^{a^*_{k_j}}\bigg[\int_{t_1}^{t_2} \left(\mathcal{L}_{k_j} f (X^{k_j}_s)- \mathcal{L} f(X^{k_j}_s)\right) \mathbf{1}_{\left\{X^{k_j}_s\in S^{k_j}\right\}}ds\bigg] \stackrel{j\rightarrow \infty}{\rightarrow} 0. \label{e:2.42}
\end{eqnarray}
For the second term on the right hand side of \eqref{e:2.40}, given any $\delta\in (0, (1\wedge T)/4$,  for $k_j\ge k_\delta$ where $k_\delta$ is specified in  Lemma \ref{L:3.13} satisfying $2^{-k_\delta}<\delta^2$,
\begin{eqnarray}
&& \Bigg|\IE^{a^*_{k_j}}\bigg[ \int_{t_1}^{t_2} \mathcal{L}_{k_j} f (X^{k_j}_s)\mathbf{1}_{\left\{X^{k_j}_s\notin S^{k_j}\right\}}ds+\int_{t_1}^{t_2} \mathcal{L} f(X^{k_j}_s)\mathbf{1}_{\left\{X^{k_j}_s\notin S^{k_j}\right\}} ds\bigg] \Bigg| \nonumber
\\
&\le &\Bigg|\IE^{a^*_{k_j}}\bigg[ \int_{0}^{T} \mathcal{L}_{k_j} f (X^{k_j}_s)\mathbf{1}_{\left\{X^{k_j}_s\notin S^{k_j}\right\}}ds+\int_{0}^{T} \mathcal{L} f(X^{k_j}_s)\mathbf{1}_{\left\{X^{k_j}_s\notin S^{k_j}\right\}} ds\bigg] \Bigg| \nonumber
\\
&= & \Bigg|\IE^{a^*_{k_j}}\bigg[ \int_{0}^{2^{-k}/\delta} \mathcal{L}_{k_j} f (X^{k_j}_s)\mathbf{1}_{\left\{X^{k_j}_s\notin S^{k_j}\right\}}ds+\int_{0}^{2^{-k}/\delta} \mathcal{L} f(X^{k_j}_s)\mathbf{1}_{\left\{X^{k_j}_s\notin S^{k_j}\right\}} ds\bigg] \Bigg| \nonumber
\\
&+& \Bigg|\IE^{a^*_{k_j}}\bigg[ \int_{2^{-k}/\delta} ^T\mathcal{L}_{k_j} f (X^{k_j}_s)\mathbf{1}_{\left\{X^{k_j}_s\notin S^{k_j}\right\}}ds+\int_{2^{-k}/\delta}^T \mathcal{L} f(X^{k_j}_s)\mathbf{1}_{\left\{X^{k_j}_s\notin S^{k_j}\right\}} ds\bigg] \Bigg| \nonumber
\\
(\text{Lemma }\ref{L3:12})&\le &  \left(C_5+\|\mathcal{L}f\|_\infty\right) \cdot \frac{2^{-k}}{\delta} + T\cdot \left(C_5+\|\mathcal{L}f\|_\infty\right) \sup_{t\in [2^{-k}/\delta, T]}\IP^{a^*_{k_j}}\left[   X_t^{k_j}\notin S^{k_j}\right] \nonumber
\\
(\text{Lemma  } \ref{L:3.13})  &\le &  \left(C_5+\|\mathcal{L}f\|_\infty\right) \delta + T\cdot \left(C_5+\|\mathcal{L}f\|_\infty\right) \cdot 4C_1\delta.
\end{eqnarray}
Since $\delta$ can be made arbitrarily small, we have proved that for the second term on the right hand side of \eqref{e:2.40}, it also holds   that 
\begin{equation}\label{e:2.43}
\lim_{j\rightarrow \infty } \Bigg|\IE^{a^*_{k_j}}\bigg[ \int_{t_1}^{t_2} \mathcal{L}_{k_j} f (X^{k_j}_s)\mathbf{1}_{\left\{X^{k_j}_s\notin S^{k_j}\right\}}ds+\int_{t_1}^{t_2} \mathcal{L} f(X^{k_j}_s)\mathbf{1}_{\left\{X^{k_j}_s\notin S^{k_j}\right\}} ds\bigg] \Bigg|=0.
\end{equation}
Combining \eqref{e:2.42} and \eqref{e:2.43}, we have showed that (II) on the right hand side of \eqref{compute-T:3.13-2}, thus the entire right hand side of \eqref{compute-T:3.13-2} tends to zero as $j\rightarrow \infty$. This combined with \eqref{compute-T3.13-1} shows that $(Y, \IP^{a^*})$ is indeed a solution to the   $\mathbf{D}([0, T], E, \rho)$  martingale problem $(\mathcal{L}, \mathcal{G})$   with respect to the filtration $\{\mathcal{M}_t\}_{t\ge 0}$.

To finish the proof, it remains to show that  the $\mathbf{D}([0, T], E, \rho)$  martingale problem $(\mathcal{L}, \mathcal{G})$  has a unique solution. Towards this, we denote the infinitesimal generator of BMVD defined in Theorem \ref{BMVD-non-drift} by $(\mathcal{L}, \mathcal{D(L)})$ (see \cite[Theorem 2.3]{CL} for its description). In view of the definition of $\mathcal{G}$, it is easy to verify that the bp-closure (whose definition can be found, e.g., in \cite[Definition 3.4.3]{AB}) of the graph  of $\mathcal{L}$ restricted on $\mathcal{D(L)}\cap \mathcal{G}$ is the same as the bp-closure of  the graph of $\mathcal{L}$ restricted on  $\mathcal{D(L)}\cap C_c(E)$.  Therefore by \cite[Proposition 3.4.19]{AB},  the $\mathbf{D}([0, T], E, \rho)$  martingale problem $(\mathcal{L}, \mathcal{D(L)}\cap \mathcal{G})$ has the same set of solution(s) as the  $\mathbf{D}([0, T], E, \rho)$  martingale problem $(\mathcal{L}, \mathcal{D(L)}\cap C_c(E))$.

Finally, given any $f \in C_0(E)\cap \mathcal{D(L)}$, there exists $\{f_n\}_{n\ge 1}\subset C_c(E)\cap \mathcal{D(L)}$ such that $f_n\rightarrow f$ and $\mathcal{L}f_n\rightarrow \mathcal{L}f$ both in $L^2$-norm. This means that the closure of $\mathcal{L}$   restricted   on $\mathcal{D(L)}\cap C_c(E)$ coincides with the closure of $\mathcal{L}$ restricted on $\mathcal{D(L)}\cap C_0(E)$, which corresponds to the BMVD defined in Theorem \ref{BMVD-non-drift} which is a Feller process with strong Feller property. By \cite[Theorem 3.1, Remark 3.3]{MF}, the $\mathbf{D}([0, T], E, \rho)$  martingale problem $(\mathcal{L}, \mathcal{D(L)}\cap C_c(E))$  has a unique solution, which has to be the BMVD defined in Theorem \ref{BMVD-non-drift}. In view of the last sentence of the last paragraph,  BMVD defined in Theorem \ref{BMVD-non-drift} also has to be the $\mathbf{D}([0, T], E, \rho)$  martingale problem $(\mathcal{L}, \mathcal{D(L)}\cap \mathcal{G})$. Since earlier in thie proof we have claimed that $(Y, \IP^{a^*})$ is a solution to the   $\mathbf{D}([0, T], E, \rho)$  martingale problem $(\mathcal{L}, \mathcal{G})$   with respect to the filtration $\{\mathcal{M}_t\}_{t\ge 0}$,  $(Y, \IP^{a^*})$ coincides with the BMVD defined in Theorem \ref{BMVD-non-drift} starting from $a^*$. Since $Y$ is the sequencial limit of any weakly convergent subsequence of $\{X^k\}_{k\ge 1}$, the proof is complete. 
\end{proof}

\vskip 0.3truein

\noindent {\bf Shuwen Lou}

\smallskip \noindent
Department of Mathematics and Statistics, Loyola University Chicago,
\noindent
Chicago, IL 60660, USA

\noindent
E-mail:  \texttt{slou1@luc.edu}

 \end{document}